\renewcommand{\phi}{\varphi}
\renewcommand{\ker}{\Ker}
\newcommand{\mc}[1]{\mathcal{#1}}
\newcommand{\mf}[1]{\mathfrak{#1}}
\newcommand{\mb}[1]{\mathbb{#1}}
\DeclareMathOperator{\End}{End}
\DeclareMathOperator{\res}{Res}
\DeclareMathOperator{\Ker}{Ker}
\DeclareMathOperator{\Res}{Res}
\newcommand{\vac}{|0\rangle}
\newcommand{\Kh}{\mathbb{K}[[h]]}
\newcommand{\spd}[1]{\partial_{#1}}
\newcommand{\gse}[2]{\iota_{#1, #2}}
\newcommand{\Sp}[2]{\mc{S}^{#1 #2}}
\theoremstyle{plain}
\newtheorem{theorem}{Theorem}[section]
\newtheorem{lemma}[theorem]{Lemma}
\newtheorem{proposition}[theorem]{Proposition}
\newtheorem{corollary}[theorem]{Corollary}
\theoremstyle{definition}
\newtheorem{definition}[theorem]{Definition}
\newtheorem{example}[theorem]{Example}
\theoremstyle{remark}
\newtheorem{remark}[theorem]{Remark}
\numberwithin{equation}{section}
\definecolor{light}{gray}{.9}
\begin{document}

\title{On the structure of quantum vertex algebras}

\author{Alberto De Sole}
\address{Dipartimento di Matematica, Sapienza Universit\`a di Roma,
P.le Aldo Moro 2, 00185 Rome, Italy}
\email{desole@mat.uniroma1.it}
\urladdr{http://www1.mat.uniroma1.it/~desole}
%
\author{Matteo Gardini}
\address{Dipartimento di Matematica, Sapienza Universit\`a di Roma,
P.le Aldo Moro 2, 00185 Rome, Italy}
\email{gardini@mat.uniroma1.it}
\urladdr{http://www1.mat.uniroma1.it/~gardini/}
\author{Victor G. Kac}
\address{Department of Mathematics, MIT,
77 Massachusetts Ave., Cambridge, MA 02139, USA}
\email{kac@math.mit.edu}
\urladdr{http://www-math.mit.edu/~kac/}



\begin{abstract}
A definition of a quantum vertex algebra, which is a deformation of a vertex algebra, was proposed by Etingof and Kazhdan in 1998. In a nutshell, a quantum vertex algebra is a braided state-field correspondence which satisfies associativity and braided locality axioms. We develop a structure theory of quantum vertex algebras, parallel to that of vertex algebras. 
In particular, we introduce braided n-products for a braided state-field correspondence and prove for quantum vertex algebras a version of the Borcherds identity.
\end{abstract}

\keywords{
Vertex algebras,
braided vertex algebras,
quantum vertex algebras,
quantum Borcherds identity.
}

\maketitle


\pagestyle{plain}

\section{Introduction}\label{sec:1}

Let $V$ be a module over a unital commutative associative ring $\mb K$. The basic objects of study in the theory of vertex algebras are products on $V$ with values in Laurent series over $V$, i.e. $\mb K$-linear maps
\begin{equation}
\label{VK1.1}
Y: V \otimes V \to V((z))\,\,, \,\,\,\, a\otimes b \mapsto Y(z)(a \otimes b).
\end{equation}
A product \eqref{VK1.1} is called \emph{unital} if there exists a non-zero element 
$\vac \in V$, called the \emph{vacuum vector}, such that, for any $a \in V$, one has
\begin{equation}
\label{VK1.2}
Y(z)(\vac \otimes a) = a,\, Y(z)(a \otimes \vac) \equiv a\, mod\, zV[[z]].
\end{equation}
Condition \eqref{VK1.2} is called the \emph{vacuum axiom}. 
Given a unital product \eqref{VK1.1} on $V$, one defines 
the \emph{translation operator} $T$ on $V$ by
\begin{equation}
\label{VK1.3}
Ta = \spd{z} Y(z)(a \otimes \vac)_{|z=0}\,\,,\,\,\,\, a \in V.
\end{equation}
The product \eqref{VK1.1} is called \emph{translation covariant} if the following two conditions hold:
\begin{align}
\label{VK1.4} TY(z)(a \otimes b) - Y(z)(a \otimes Tb) = \spd{z} Y(z)(a \otimes b),\\
\label{VK1.5} Y(z)(Ta \otimes b) = \spd{z} Y(z)(a \otimes b).
\end{align}
Note that these two conditions imply that $T$ is a derivation of the product \eqref{VK1.1} and, if this holds, conditions \eqref{VK1.4} and \eqref{VK1.5} are equivalent.

A unital translation covariant product \eqref{VK1.1} is called a \emph{state-field correspondence}.\\
A \emph{vertex algebra} is a state-field correspondence on a vector space $V$ over a field $\mb K$ of characteristic $0$ which satisfies the \emph{locality axiom}
\begin{equation}
\label{VK1.6}
\begin{split}
(z-w)^N Y(z) (1 \otimes Y(w))(a \otimes b \otimes c) = (z-w)^N Y(w) (1 \otimes Y(z))(b \otimes a \otimes c)
\end{split}
\end{equation}
for some non-negative integer $N$ depending on elements $a, b \in V$.

Introducing the quantum fields $Y(a,z)$, $a \in V$, by $Y(a,z)b = Y(z)(a \otimes b)$, $b \in V$, we obtain a more familiar form of equations \eqref{VK1.1}-\eqref{VK1.6} respectively 
(cf. \cite{K}, \cite{K15}):
\begin{enumerate}
\item[\eqref{VK1.1}'] $Y(a,z) = \sum_{n \in \mb Z} a_{(n)} z^{-n-1}$, where $a_{(n)} \in \End V$,
\item[\eqref{VK1.2}'] $Y(\vac, z) = I_V$, $Y(a,z)\vac \in a + z V[[z]]$,
\item[\eqref{VK1.3}'] $Ta = \spd{z} Y(a,z)\vac\,\big|_{z=0}$,
\item[\eqref{VK1.4}'] $[T,Y(a,z)] = \spd{z}Y(a,z)$,
\item[\eqref{VK1.5}'] $Y(Ta,z) = \spd{z} Y(a,z)$,
\item[\eqref{VK1.6}'] $(z-w)^N [Y(a,z), Y(b,w)] = 0$.
\end{enumerate}
Then the map $a \mapsto Y(a,z)$ is indeed a correspondence between states $a \in V$ and quantum fields $Y(a,z)$. This definition of vertex algebra was given in \cite{K} (see also \cite{K15}), where its equivalence to the original definition, given by Borcherds \cite{B} in 1986 (via Borcherds identity), was established.

A quantum deformation of the notion of a vertex algebra is obtained by taking for the space of states $V$ a topologically free module over $\mb K[[h]]$ and considering a ``topological'' state-field correspondence satisfying a ``deformed'' axiom of locality which induces locality \eqref{VK1.6} on $V/hV$ (hence induces on V/hV a structure of a vertex algebra). Following ideas of \cite{FR}, Etingof and Kazhdan in \cite{EK5} defined such a ``deformed'' locality by introducing a $\mb K[[h]]$-linear map, called \emph{braiding},
\begin{align*}
\mc S: V \widehat{\otimes} V \to V \widehat{\otimes} V \widehat{\otimes} (\mb K((z))[[h]])
\end{align*}
(where $\widehat{\otimes}$ stands for the $\mb K[[h]]$-completed tensor product), which is $\equiv 1$ mod $h$ and is a shift-invariant unitary solution of the quantum Yang-Baxter equation (see Definition \ref{BVADef}). To get the $\mc S$-locality they inserted $\mc S$ in the LHS of \eqref{VK1.6} as follows (cf. \cite{FR})
\begin{equation}
\label{VK1.7}
\begin{split}
(z-w)^N Y(z) (1 \otimes Y(w))\mc S^{1 2}(z-w) (a \otimes b \otimes c)\\
= (z-w)^N Y(w) (1 \otimes Y(z))(b \otimes a \otimes c)\ mod\ h^M
\end{split}
\end{equation}
for all $a, b, c \in V$, $M \in \mb Z_{\geq 0}$, and $N \in \mb Z_{\geq 0}$ depending on $a,b$ and $M$.

The resulting notion is called a \emph{braided vertex algebra}. Note that if $V$ is a braided vertex algebra, obviously $V/hV$ is a vertex algebra over $\mb K$. Hence such $V$ is called a \emph{quantization} of the vertex algebras $V/hV$.

It is well known that vertex algebras satisfy the associativity relation
\begin{equation}
\label{VK1.8}
(z+w)^N Y(w)(Y(z) \otimes 1) (a \otimes b \otimes c) = (z+w)^N \gse{z}{w} Y(z+w)(1 \otimes Y(w))(a \otimes b \otimes c)
\end{equation}
for some non-negative integer $N$ (see e.g. {\cite[Prop.4.1(b)]{BK}}), which implies that, in case $T=0$, vertex algebras are associative algebras. However, this is not the case for braided vertex algebras.

Etingof and Kazhdan in \cite{EK5} rectified the definition of a braided vertex algebra by imposing on the braiding $\mc S$ the hexagon relation (see equation \eqref{HexagonRelation}), and proved that a braided vertex algebra with braiding $\mc S$ satisfying the hexagon relation, satisfies the associativity relation \eqref{VK1.8}. They called such a braided vertex algebra a \emph{quantum vertex algebra}.

The main result of \cite{EK5} is a construction of a quantum vertex algebra which is a non-trivial quantization of the universal affine vertex algebra $V^k(\mathfrak{sl}_N)$. They also discuss quantum vertex algebras with $T=0$ (which are associative algebras with braiding) and the quasiclassical limit of a quantum vertex algebra (which is a vertex algebra with some additional structure).

In the present paper we develop a structure theory of braided and quantum vertex algebras, keeping in mind the structure theory of vertex algebras. For the former we first recall some of the results of the latter.

The following theorem, 
collecting results from \cite{K}, \cite{LL}, \cite{BK} and \cite{L03},
gives several equivalent characterizations of a vertex algebra.
\begin{theorem}
\label{CharacterizationVAsIntro}
Let $V$ be a vector space over a field $\mb K$ of characteristic $0$ and let $Y:\, V\otimes V\to V((z))$ be a state-field correspondence.
Then the following conditions are equivalent ($a,b,c \in V$):
\begin{enumerate}[(i)]
\item $V$ is a vertex algebra.
\item The following Jacobi identity holds:
\begin{equation}
\label{JacobiIntro}
\begin{split}
& \gse{z}{w} \delta(x, z-w) Y(z) \big( 1 \otimes Y(w) \big)(a \otimes b \otimes c) \\
& - \gse{w}{z} \delta(x, z-w) Y(w) \big( 1 \otimes Y(z) \big) (b \otimes a \otimes c) \\
&= \gse{z}{x} \delta(w, z-x) Y(w)\big( Y(x) \otimes 1 \big) (a \otimes b \otimes c)\,.
\end{split}
\end{equation}
\item The following two equations hold: 
the associativity relation ($N\gg0$)
\begin{equation}
\begin{split}
& (z+w)^N Y(w) \big(Y(z) \!\otimes\! 1 \big) (a \!\otimes\! b \!\otimes\! c) \\
& = (z+w)^N \gse{z}{w} Y(z+w) \big( 1 \!\otimes\! Y(w) \big) (a \!\otimes\! b \!\otimes\! c)
\,,
\end{split}
\end{equation}
and the skewsymmetry relation
\begin{equation}
\label{YYopIntro}
Y(z)(a \otimes b) = e^{zT} Y(-z)(b \otimes a) =: Y^{op}(z)(a \otimes b)
\,.
\end{equation}
\item The Borcherds identity holds for any $n \in \mb Z$:
\begin{equation}
\label{BorcherdsIdentitiesIntro}
\begin{split}
&\gse{z}{w}(z-w)^n
Y(z) \big(1\otimes Y(w) \big)(a \otimes b \otimes c ) \\
& -\gse{w}{z}(z-w)^n
Y(w) \big( 1\otimes Y(z) \big)(b \otimes a \otimes c )\\
&= \sum_{j \geq 0} Y(w)( a_{(n+j)} b \otimes c ) \frac{\spd{w}^j \delta(z,w)}{j!}.
\end{split}
\end{equation}
\item 
The skewsymmetry $Y = Y^{op}$,
and the following $n$-product identities holds:
\begin{equation}
\label{nproductsIdentitiesIntro}
Y(a,z)_{(n)} Y(b,z) = Y(a_{(n)} b, z)\,\,,\,\,\,\, n \in \mb Z
\,.
\end{equation}
\end{enumerate}
\end{theorem}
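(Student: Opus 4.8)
The plan is to use condition (i) as the hub, proving (i)~$\Leftrightarrow$~(ii)~$\Leftrightarrow$~(iv), then (i)~$\Leftrightarrow$~(iii) and (i)~$\Leftrightarrow$~(v); since $V$ is \emph{assumed} to be a state-field correspondence, each ``$\Leftarrow$'' reduces to producing the locality axiom \eqref{VK1.6}. The equivalence (ii)~$\Leftrightarrow$~(iv) uses no axiom of $Y$: the $n$-th Borcherds identity \eqref{BorcherdsIdentitiesIntro} is $\Res_x\,x^n$ applied to the Jacobi identity \eqref{JacobiIntro}, via the delta-function identities $\Res_x\,x^n\,\gse{z}{w}\delta(x,z-w)=\gse{z}{w}(z-w)^n$, $\Res_x\,x^n\,\gse{w}{z}\delta(x,z-w)=\gse{w}{z}(z-w)^n$, and $\Res_x\,x^n\,\gse{z}{x}\delta(w,z-x)\,Y(w)(Y(x)\otimes1)(a\otimes b\otimes c)=\sum_{j\ge0}Y(w)(a_{(n+j)}b\otimes c)\,\spd{w}^j\delta(z,w)/j!$ (the last one using $\spd{w}^j\delta(z,w)=(-1)^j\spd{z}^j\delta(w,z)$ and $Y(a,x)b\in V((x))$); conversely, multiplying the $n$-th identity by $x^{-n-1}$ and summing over $n\in\mathbb Z$ reassembles \eqref{JacobiIntro} termwise, the only point to check being convergence of the resulting sums, which again follows from $Y(a,x)b\in V((x))$.

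For (ii)~$\Rightarrow$~(i), apply $\Res_x\,x^N$ to \eqref{JacobiIntro} with $N\gg0$ so that $a_{(N+j)}b=0$ for all $j\ge0$: the right-hand side then vanishes, and since $N\ge0$ the two left-hand terms become $(z-w)^NY(z)(1\otimes Y(w))(a\otimes b\otimes c)$ and $(z-w)^NY(w)(1\otimes Y(z))(b\otimes a\otimes c)$ with no expansion needed, which is \eqref{VK1.6}. The converse (i)~$\Rightarrow$~(ii) is the first genuinely nontrivial step; following \cite{K} and \cite{LL}, one first extracts from locality and translation covariance the associativity relation \eqref{VK1.8} and the skewsymmetry \eqref{YYopIntro} (e.g.\ by specializing $c=\vac$ in \eqref{VK1.6}, using $Y(a,z)\vac=e^{zT}a$ and translation covariance to rewrite both sides), and then recognizes the three terms of \eqref{JacobiIntro} as the three natural expansions --- in the regions $|z|>|w|$, $|w|>|z|$ and $|w|>|z-w|$ --- of a single $V$-valued rational expression, the discrepancies between these expansions being exactly the $\delta$-function terms. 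I would organize this step with the standard formal-calculus lemmas on $\gse{z}{w}-\gse{w}{z}$ and on decompositions of rational functions into $\delta$-functions.

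For (i)~$\Rightarrow$~(iii), the associativity relation is \cite[Prop.\,4.1(b)]{BK} and the skewsymmetry is the one just extracted. For (iii)~$\Rightarrow$~(i), I would substitute $Y(z)(a\otimes b)=e^{zT}Y(-z)(b\otimes a)$ into the associativity relation \eqref{VK1.8} and, using \eqref{VK1.5}, push $e^{zT}$ through $Y(w)$ to rewrite the left-hand side as $(z+w)^N\gse{w}{z}Y(w+z)(Y(-z)\otimes1)(b\otimes a\otimes c)$; applying associativity once more (now with $(a,b,c)\mapsto(b,a,c)$ and $(z,w)\mapsto(-z,w+z)$) and renaming $z+w$ to a new variable then yields \eqref{VK1.6}, the delicate bookkeeping being to track which expansion ($\gse{z}{w}$ versus $\gse{w}{z}$) and which power of $(z\pm w)$ occurs at each stage. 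For (i)~$\Rightarrow$~(v): skewsymmetry is again the one above, and \eqref{nproductsIdentitiesIntro} follows from the Borcherds identities (iv) by applying $\Res_z$, which annihilates every $j\ge1$ term on the right and leaves $Y(a_{(n)}b,w)$. For (v)~$\Rightarrow$~(i) I would follow Li \cite{L03}: the identities \eqref{nproductsIdentitiesIntro} say precisely that the span of the fields $Y(a,z)$ is closed under all $n$-th products, which together with translation covariance already produces a ``nonlocal'' (field-algebra) vertex structure, and the skewsymmetry $Y=Y^{op}$ is exactly what promotes the resulting weak commutativity to the locality \eqref{VK1.6}.

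The main obstacle is common to the three hard directions --- (i)~$\Rightarrow$~(ii), (iii)~$\Rightarrow$~(i), and (v)~$\Rightarrow$~(i): in each case one must reconstruct the \emph{two-variable} locality from data that is either one-variable (the $n$-product identities, in $z$ alone) or an identity between \emph{specific} one-sided expansions (associativity, Borcherds), and bridging that gap requires the formal calculus of passing between iterated expansions of a single rational expression and the $\delta$-functions measuring their differences. Of these I expect (v)~$\Rightarrow$~(i) to be the most delicate, since there essentially all of the two-variable structure must be manufactured, along the lines of Li's argument.
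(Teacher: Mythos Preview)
Your plan is sound and largely parallels the paper's, which also routes through the same literature (\cite{K}, \cite{LL}, \cite{BK}, \cite{L03}); the equivalences (ii)$\Leftrightarrow$(iv) and (ii)$\Rightarrow$(i) are handled exactly as you describe. Two places where the paper's argument is sharper than your sketch are worth noting. For (i)$\Leftrightarrow$(ii) the paper packages all of the ``three expansions of one rational function'' bookkeeping into a single lemma (Lemma~\ref{LiLem.2.1}, from \cite{L03}): the Jacobi-type identity \eqref{LiJacobi} for given $a(z,w),b(z,w),c(x,w)$ is \emph{equivalent} to the pair of relations $(z-w)^Na(z,w)=(z-w)^Nb(z,w)$ and $(x+w)^N\gse{x}{w}a(x+w,w)=(x+w)^Nc(x,w)$ for $N\gg0$; applied with $a=Y(a,z)Y(b,w)c$, $b=Y(b,w)Y(a,z)c$, $c=Y(Y(a,x)b,w)c$, this says Jacobi $\Leftrightarrow$ (locality on every vector)$+$(associativity), which dispatches both directions at once. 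For (v)$\Rightarrow$(i) the paper does not go through a field-algebra intermediate step but invokes \cite[Thm.~4.1(a)]{BK} (stated as Lemma~\ref{EquivalencenproductIdentitiesLemma}): the $n$-product identities are equivalent to the commutator formula $[Y(a,z),Y^{op}(b,w)]=\sum_{j\ge0}Y^{op}(a_{(j)}b,w)\,\spd{w}^j\delta(z,w)/j!$, which under $Y=Y^{op}$ is a finite $\delta$-expansion of $[Y(a,z),Y(b,w)]$, giving locality immediately.

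The one genuine soft spot in your outline is the sketch for (iii)$\Rightarrow$(i). After substituting skewsymmetry and pushing $e^{zT}$ through, you correctly arrive at
\[
(z+w)^N\gse{w}{z}Y(w+z)\big(Y(-z)\otimes1\big)(b\otimes a\otimes c)=(z+w)^N\gse{z}{w}Y(z+w)\big(1\otimes Y(w)\big)(a\otimes b\otimes c),
\]
but ``applying associativity once more with $(z,w)\mapsto(-z,w+z)$'' is not a legal move: associativity \eqref{fAssociativity} is an identity in two independent formal variables, and $w+z$ is not one, so the substitution has no a priori meaning (and the expansions $\gse{z}{w}$ vs.\ $\gse{w}{z}$ will not cooperate). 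The paper simply cites \cite[Thm.~7.3]{BK}, whose actual mechanism is different: Proposition~\ref{BKProp.4.1} shows associativity is equivalent to locality of every pair $(Y(a,z),Y^{op}(b,w))$ on each $c$, so $Y=Y^{op}$ gives locality of $(Y,Y)$ on each $c$; the upgrade to genuine locality (with $N$ independent of $c$) then uses Lemma~\ref{BKLem.3.8} to replace $c$ by $e^{uT}c$ and a specialization-to-zero trick, rather than any variable substitution inside associativity. Your diagnosis that this is ``delicate bookkeeping'' is right, but the bookkeeping is of a different kind than you propose.
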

Here and below $\delta(z,w)$ denotes the formal $\delta$-function
$\delta(z,w) = \sum_{k \in \mathbb{Z}} z^{-k-1} w^k$,
and $\gse{z}{w}$ denotes the geometric series expansion for $|z| > |w|$.
In condition (v), 
the \emph{$n$-product} of quantum fields $a(z), b(z): V \rightarrow V((z))$ is defined by
\begin{equation}
\label{nproductsIntro}
a(z)_{(n)} b(z) = Res_x \big( a(x) b(z) \gse{x}{z}(x-z)^n - b(z) a(x) \gse{z}{x}(x-z)^n \big).
\end{equation}

A vertex algebra is called \emph{commutative} if the locality axiom \eqref{VK1.6} holds for $N=0$:
\begin{equation}
\label{CommutativityIntro}
Y(z) \big( 1 \otimes Y(w) \big)(a \otimes b \otimes c) = Y(w) \big( 1 \otimes Y(z) \big)(b \otimes a \otimes c)
\,,\,\,\,\, a,b,c\in V\,.
\end{equation}
Equivalently, this means that $[Y(a,z), Y(b,w)] = 0$ for any $a,b \in V$.

The following simple result can be found in \cite{B} and \cite{K}.
\begin{theorem}
\label{CommutativityThmIntro}
A vertex algebra is commutative if and only if $Y(z)(a \otimes b) \in V[[z]]$ for any $a, b \in V$.
Moreover, in this case,
the $(-1)$-product $a_{(-1)}b$ is commutative, associative, 
unital (with unity $\vac$), differential (with derivative $T$),
and the state-field correspondence $Y$ is given by:
$Y(z) (a \otimes b) = (e^{zT}a)_{(-1)} b$.
\end{theorem}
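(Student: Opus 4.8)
The plan is to establish the stated equivalence first and then deduce the ``moreover'' part.

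\emph{Equivalence.} For the implication ``commutative $\Rightarrow Y(z)(a\otimes b)\in V[[z]]$'', I would specialize $c=\vac$ in the commutativity relation \eqref{CommutativityIntro}. By the vacuum axiom $Y(w)(b\otimes\vac)\in V[[w]]$ with constant term $b$, so the left-hand side of \eqref{CommutativityIntro} is $Y(a,z)$ applied coefficientwise to a power series in $w$ and therefore lies in $V((z))[[w]]$; symmetrically the right-hand side lies in $V((w))[[z]]$. The key point is that $V((z))[[w]]\cap V((w))[[z]]=V[[z,w]]$ (an expression that is a power series in $w$ with Laurent-in-$z$ coefficients and simultaneously a power series in $z$ with Laurent-in-$w$ coefficients can have no negative powers at all). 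Hence the common value lies in $V[[z,w]]$, and its $w^{0}$-coefficient is $Y(z)(a\otimes b)=Y(a,z)b\in V[[z]]$. For the converse, assuming $Y(z)(a\otimes b)\in V[[z]]$ for all $a,b$, both $Y(z)(1\otimes Y(w))(a\otimes b\otimes c)$ and $Y(w)(1\otimes Y(z))(b\otimes a\otimes c)$ are obtained by applying the outer field coefficientwise to the power series $Y(w)(b\otimes c)$, resp.\ $Y(z)(a\otimes c)$, hence both lie in $V[[z,w]]$; by the locality axiom \eqref{VK1.6} they agree after multiplication by $(z-w)^{N}$, and since multiplication by $(z-w)$ is injective on $V[[z,w]]$ we may cancel $(z-w)^{N}$ to obtain \eqref{CommutativityIntro}.

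\emph{The ``moreover'' part.} I would work under the equivalent hypothesis $Y(a,z)b\in V[[z]]$, i.e.\ $a_{(n)}b=0$ for $n\ge0$, so that $Y(a,z)b=\sum_{n\ge0}(a_{(-n-1)}b)z^{n}$. From translation covariance $Y(Ta,z)=\spd{z}Y(a,z)$ one reads off $(Ta)_{(n)}=-n\,a_{(n-1)}$, hence by iteration $(T^{n}a)_{(-1)}=n!\,a_{(-n-1)}$, and substituting gives $Y(z)(a\otimes b)=(e^{zT}a)_{(-1)}b$. The vacuum axiom and $Y(\vac,z)=I_{V}$ give $a_{(-1)}\vac=a=\vac_{(-1)}a$, so $\vac$ is a unit for the product $a\cdot b:=a_{(-1)}b$. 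Since $T$ is a derivation of $Y$, the $z^{0}$-coefficient of the identity $T\,Y(z)(a\otimes b)=Y(z)(Ta\otimes b)+Y(z)(a\otimes Tb)$ reads $T(a\cdot b)=(Ta)\cdot b+a\cdot(Tb)$, so the product is differential with derivation $T$. By Theorem~\ref{CharacterizationVAsIntro} the skewsymmetry $Y=Y^{op}$ holds, i.e.\ $Y(z)(a\otimes b)=e^{zT}Y(-z)(b\otimes a)$, whose $z^{0}$-coefficient gives $a\cdot b=b\cdot a$. Finally, by the associativity relation of Theorem~\ref{CharacterizationVAsIntro}(iii), under our hypothesis both $Y(w)(Y(z)\otimes1)(a\otimes b\otimes c)$ and $\gse{z}{w}Y(z+w)(1\otimes Y(w))(a\otimes b\otimes c)$ lie in $V[[z,w]]$, so after cancelling the factor $(z+w)^{N}$ (again using injectivity of multiplication by $(z+w)$ on $V[[z,w]]$) they are equal in $V[[z,w]]$, and the $z^{0}w^{0}$-coefficient gives $(a\cdot b)\cdot c=a\cdot(b\cdot c)$.

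\emph{Main obstacle.} The only step that is not pure coefficient bookkeeping is the identity $V((z))[[w]]\cap V((w))[[z]]=V[[z,w]]$ in the forward implication, together with checking that, after setting $c=\vac$, each side of \eqref{CommutativityIntro} genuinely lands in the appropriate one of these two completions. Everything else reduces to reading off coefficients of $z$ and $w$ and to the elementary fact that multiplication by $(z-w)$ and by $(z+w)$ is injective on $V[[z,w]]$.
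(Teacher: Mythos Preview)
Your proof is correct and follows essentially the same route as the paper's (Theorem~\ref{CommVAHolom} together with Lemmas~\ref{vacUnit-1prod}--\ref{LocalityOnVacuum-1prodCommutativity}): specialize $c=\vac$ and exploit regularity in one variable to force holomorphicity, use injectivity of multiplication by $(z-w)$ on $V[[z,w]]$ for the converse, and read off the properties of the $(-1)$-product from the vacuum/translation axioms, $Y=Y^{op}$, and associativity. The only cosmetic difference is that the paper obtains the formula $Y(z)(a\otimes b)=(e^{zT}a)_{(-1)}b$ by first deriving $Y(z)(a\otimes b)=b_{(-1)}e^{zT}a$ directly from the $c=\vac$ computation (equation~\eqref{damettere}) and then invoking commutativity, whereas you derive it independently from translation covariance via $(T^{n}a)_{(-1)}=n!\,a_{(-n-1)}$.
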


The main goal of the present paper is to describe the quantum analogue
of Theorem \ref{CharacterizationVAsIntro}.
We also describe a quantum analogue of Theorem \ref{CommutativityThmIntro}.

First, the quantum analogue of the Jacobi identity \eqref{JacobiIntro} is
the following \emph{$\mc{S}$-Jacobi identity} (cf. \cite{L10}):
\begin{equation}
\label{SJacobiIntro}
\begin{split}
&\gse{z}{w} \delta(x, z-w) Y(z) \big( 1\otimes Y(w) \big)(a \otimes b \otimes c)\\
&- \gse{w}{z} \delta(x, z-w) Y(w) \big( 1\otimes Y(z) \big)\big(\mc{S}(w-z)(b \otimes a) \otimes c \big)\\
&= \gse{z}{x} \delta(w, z -x) Y(w) \big( Y(x) \otimes 1 \big) (a \otimes b \otimes c).
\end{split}
\end{equation}
The associativity relation \eqref{VK1.8} remains the same
for quantum vertex algebras, but it has to be understood modulo $h^M$
(with the exponent $N$ depending on $M$).
The skewsymmetry relation \eqref{YYopIntro} becomes the quantum skewsymmetry 
\begin{equation}\label{eq:ginocchio1}
Y\mc{S} = Y^{op}\,,
\end{equation} 
where $Y^{op}$ is defined by \eqref{YYopIntro}
(see {\cite[Lemma 1.2]{EK5}}).
The quantum analogue of the Borcherds identity \eqref{BorcherdsIdentitiesIntro}
is the \emph{quantum Borcherds identity} ($n \in \mb Z$)
\begin{equation}
\label{qBorcherdsIdentitiesIntro}
\begin{split}
& Y(z) \big( 1\otimes Y(w) \big) \big( \gse{z}{w} \mc{S}(z-w)(a \otimes b) \otimes c \big) \gse{z}{w}(z-w)^n\\
& -Y(w) \big( 1\otimes Y(z) \big) (b \otimes a \otimes c)\gse{w}{z}(z-w)^n\\
&= \sum_{j \in \mb Z_{\geq0}} Y(w)\big(a_{(n+j)}^{\mc{S}}b \otimes c \big)\ \frac{\spd{w}^j \delta(z,w)}{j!}
\,,
\end{split}
\end{equation}
where the products $a^{\mc S}_{(n)}b$ are defined as the Fourier coefficients 
of $Y(z)\mc S(z)(a\otimes b)$, and the quantum analogue of the $n$-products \eqref{nproductsIntro}
is as follows ($n \in \mb Z$)
\begin{equation}
\label{qnproductsDefIntro}
\begin{split}
(Y(z)_{(n)}^{\mc{S}} Y(z))(a\!\otimes\! b\!\otimes\!c)
& = Res_x \Big(
\gse{x}{z}(x-z)^n
Y(x) \big( 1\otimes Y(z) \big) \mc{S}^{12}(x-z)(a\!\otimes\! b\!\otimes\!c) \\
& - \gse{z}{x}(x-z)^n 
Y(z) \big( 1\otimes Y(x) \big)(b \otimes a \otimes c)
\Big)
\,.
\end{split}
\end{equation}
Then, the \emph{quantum} $n$-\emph{product identity} is ($n \in \mb Z$)
\begin{equation}
\label{qnproductIdentitiesIntro}
\Big( Y(z)_{(n)}^{\mc{S}} Y(z)\Big)(a\otimes b\otimes c) 
= Y(z)\big(a_{(n)}^{\mc{S}}b \otimes c \big).
\end{equation}
We can now formulate the quantum analogue of Theorem \ref{CharacterizationVAsIntro}
(cf. Theorem \ref{thm:final}):
\begin{theorem}\label{thm:final-intro}
Let $V$ be a topologically free $\mb K[[h]]$-module with a topological state-field correspondece $Y: V \widehat{\otimes} V \to V((z))$ and braiding $\mc S$. 
Then the following conditions are equivalent:
\begin{enumerate}[(i)]
\item $V$ is a braided vertex algebra 
and the associativity relation \eqref{VK1.8} holds modulo $h^M$;
\item the $\mc{S}$-Jacobi identity \eqref{SJacobiIntro} holds;
\item the associativity relation \eqref{VK1.8} and the quantum skewsymmetry \eqref{eq:ginocchio1} hold; 
\item the quantum Borcherds identity \eqref{qBorcherdsIdentitiesIntro} holds;
\item the quantum $n$-product identities \eqref{qnproductIdentitiesIntro} 
and the $\mc{S}$-locality \eqref{VK1.7} hold.
\end{enumerate}
\end{theorem}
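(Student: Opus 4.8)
The plan is to follow the proof of Theorem~\ref{CharacterizationVAsIntro}, carrying the braiding $\mc S$ along at every step, and to organize the argument as the cycle of implications
\[
(i)\ \Longrightarrow\ (iii)\ \Longrightarrow\ (ii)\ \Longrightarrow\ (iv)\ \Longrightarrow\ (v)\ \Longrightarrow\ (i),
\]
with the $\mc S$-Jacobi identity~\eqref{SJacobiIntro} playing the role that the ordinary Jacobi identity plays in the classical case. The ingredients are the defining properties of $\mc S$ from Definition~\ref{BVADef} (unitarity, shift-invariance, the quantum Yang--Baxter equation and the hexagon relation~\eqref{HexagonRelation}), the normalization $\mc S\equiv 1\bmod h$, the topological freeness of $V$, and the calculus of braided $n$-products developed earlier. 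Reducing modulo $h$ turns each of the five conditions into the corresponding condition of Theorem~\ref{CharacterizationVAsIntro} for the vertex algebra $V/hV$, which gives a consistency check and the base of the $h$-adic inductions used below.

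The implication $(i)\Rightarrow(iii)$ is immediate: the associativity relation is part of~$(i)$, and the quantum skewsymmetry~\eqref{eq:ginocchio1} holds in any braided vertex algebra by~\cite[Lemma~1.2]{EK5}. The implications along the ``residue side'' of the cycle are the braided counterparts of the classical manipulations. The braidings occupy different positions in~\eqref{SJacobiIntro},~\eqref{qBorcherdsIdentitiesIntro} and~\eqref{qnproductIdentitiesIntro}, so each passage between them consists of a $\delta$-function computation of the classical type together with a rearrangement of the braidings carried out by means of unitarity, shift-invariance, the Yang--Baxter equation and the hexagon relation. Concretely: $(ii)\Rightarrow(iv)$ by applying $\res_x x^n$ to~\eqref{SJacobiIntro}, the right-hand side being handled by the elementary identity that converts $\res_x x^n\,\gse{z}{x}\delta(w,z-x)$, evaluated against a field, into a finite sum of $\spd{w}^j\delta(z,w)$-terms; $(iv)\Rightarrow(ii)$ by multiplying~\eqref{qBorcherdsIdentitiesIntro} by $x^{-n-1}$ and summing over $n\in\mb Z$; $(iv)\Rightarrow(v)$ by taking $\res_x$ of~\eqref{qBorcherdsIdentitiesIntro} after the substitution $z\mapsto x$, $w\mapsto z$ and comparing with~\eqref{qnproductsDefIntro} for the $n$-product identity~\eqref{qnproductIdentitiesIntro}, and by specializing to $n=N\gg 0$ for the $\mc S$-locality~\eqref{VK1.7}, the right-hand side of~\eqref{qBorcherdsIdentitiesIntro} then being $\equiv 0\bmod h^M$ because $Y(z)\mc S(z)(a\otimes b)\in V((z))[[h]]$ forces $a^{\mc S}_{(m)}b\equiv 0\bmod h^M$ for $m$ large; and $(v)\Rightarrow(i)$ by reconstructing~\eqref{qBorcherdsIdentitiesIntro} from the quantum $n$-product identities, the $\mc S$-locality and the braided version of Dong's lemma, after which a further specialization of~\eqref{qBorcherdsIdentitiesIntro} --- in which the braidings collapse to higher order in $h$ since $\mc S\equiv 1\bmod h$ --- yields the associativity relation~\eqref{VK1.8} modulo $h^M$, so that $V$ is a braided vertex algebra satisfying associativity modulo $h^M$.

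The one substantial implication, and the step I expect to be the main obstacle, is $(iii)\Rightarrow(ii)$: deriving the $\mc S$-Jacobi identity from the associativity relation~\eqref{VK1.8} and the quantum skewsymmetry~\eqref{eq:ginocchio1}. Over a field this is the implication ``associativity and skewsymmetry imply Jacobi'' of~\cite{BK}; in the braided setting one must, on top of the classical $\delta$-function combinatorics, drag the braiding through every rewriting. The skeleton is: use~\eqref{VK1.8} to replace $Y(w)\big(Y(z)\otimes 1\big)$ by $\gse{z}{w}\,Y(z+w)\big(1\otimes Y(w)\big)$; use $Y\mc S=Y^{op}$, together with~\eqref{YYopIntro} and the inverse of $\mc S$ rewritten via unitarity, to interchange the two inner arguments; absorb the resulting exponential of $T$ by translation covariance~\eqref{VK1.4}--\eqref{VK1.5}; and re-associate, reconciling the three-fold braidings that appear by means of the Yang--Baxter and hexagon relations. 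The genuinely new difficulty, absent classically, is that~\eqref{VK1.8} holds only modulo $h^M$ with the exponent $N$ depending on $M$ (and on the vectors involved): one must choose all these integers uniformly so that the nested ``for $N\gg 0$'' estimates telescope, and then verify that the resulting $\mc S$-Jacobi identity~\eqref{SJacobiIntro} --- which is an \emph{exact} identity, not one modulo $h^M$ --- indeed holds, the passage from ``modulo $h^M$ for every $M$'' to ``exact'' being where the topological freeness of $V$ is essential. It is this combination of braiding bookkeeping and $h$-adic bookkeeping, rather than any isolated new idea, that forms the core of the proof; once $(iii)\Rightarrow(ii)$ is established the cycle closes, and Theorem~\ref{thm:final} (equivalently Theorem~\ref{thm:final-intro}) follows.
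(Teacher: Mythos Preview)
Your cycle of implications is reasonable in spirit, but there is a genuine gap in the central step $(iii)\Rightarrow(ii)$, and it stems from a misreading of the hypotheses. You list as ingredients the unitarity, shift-invariance, quantum Yang--Baxter equation and hexagon relation for $\mc S$, calling them ``the defining properties of $\mc S$ from Definition~\ref{BVADef}.'' But in this paper Definition~\ref{def:state-field}(b) requires only $\mc S=1+O(h)$, and Definition~\ref{BVADef} adds only $\mc S$-locality; the introduction explicitly stresses that none of unitarity, the shift conditions, QYBE or the hexagon relation are assumed. Propositions~\ref{prop:non-deg} and~\ref{July2017ThmRem1} establish these properties only modulo $\ker Y$, and only \emph{after} one already knows $V$ is a braided (resp.\ quantum) vertex algebra. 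So you cannot invoke them in proving $(iii)\Rightarrow(ii)$: at that stage you have associativity and $Y\mc S=Y^{op}$, nothing more about $\mc S$.

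The paper's route avoids this trap by not going $(iii)\Rightarrow(ii)$ directly. It proves $(iii)\Rightarrow(i)$ (Theorem~\ref{July2017Thm}): associativity gives, via Proposition~\ref{BKProp.4.1h}, locality of all pairs $(Y(a,z),Y^{op}(b,w))$ on each vector; then, using only $Y\mc S=Y^{op}$ and translation covariance (no unitarity, no hexagon), a careful manipulation based on Lemma~\ref{BKLem.3.8} produces $\mc S$-locality. From $(i)$ one obtains $(iv)$ via the $h$-adic analogue of Li's lemma (Lemma~\ref{LiLem.2.1b}), and $(ii)\Leftrightarrow(iv)$ by the elementary $\delta$-function manipulations you describe (Corollary~\ref{cor:3:36}, Lemma~\ref{lem:3:36}). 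Your treatment of $(iv)\Leftrightarrow(v)$ is essentially correct and matches Lemma~\ref{EquivalenceqBorcherdsIdentitiesqnprodIdentitiesLem}; no braided Dong's lemma is needed, the Decomposition Theorem~\ref{KDecThm} applied modulo $h^M$ suffices. For the return to associativity, your idea of letting ``braidings collapse to higher order in $h$'' is not how the paper proceeds and is not obviously sufficient; the paper instead recovers associativity from the Borcherds identity via Lemma~\ref{LiLem.2.1b}, obtains $\mc S$-locality on the vacuum, deduces $Y\mc S=Y^{op}$ (Remark~\ref{EK5Lem.1.2a}), and closes the loop with Theorem~\ref{July2017Thm}.
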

A topological state-field correspondence $Y$ with braiding $\mc S$ is a \emph{quantum vertex algebra}
if one of the equivalent conditions of Theorem \ref{thm:final-intro} holds.

Note that in the present paper we give a definition of a braided and a quantum vertex algebra $V$,
not requiring that the braiding $\mc S$ satisfies the shift condition,
unitarity and the quantum Yang-Baxter equation,
and replacing the hexagon relation by associativity of $V$.
In our structure theory of quantum vertex algebras
these conditions on $\mc S$ are never used.
We show that all these conditions hold automatically modulo the kernel of $Y$
(in \cite[Prop.1.11]{EK5} they are proved under a ``non-degeneracy'' assumption).

We also find a quantum analogue of Theorem \ref{CommutativityThmIntro}.
In order to state it, we define the following $\mc{S}$-\emph{commutativity}
(cf. \eqref{CommutativityIntro}):
\begin{equation}
\label{S-commutativityIntro}
Y(z) \big( 1\otimes Y(w) \big)\big(\gse{z}{w}\mc{S}(z-w)(a \otimes b) \otimes c \big) = Y(w) \big( 1\otimes Y(z) \big)(b \otimes a \otimes c).
\end{equation}
We have, as immediate consequence 
of Theorems \ref{ScommutativeBVAHolomBVA}
and \ref{thm:scomm-bva},
\begin{theorem}
\label{ScommutativeBVAHolomBVAIntro}
Let $V$ be a quantum vertex algebra with a fixed braiding $S(z)$. 
The $\mc{S}$-commutativity
\eqref{S-commutativityIntro} holds if and only if 
$Y(z)(a \otimes b) \in V[[z]]$ for every $a, b \in V$.
Moreover, in this case the $(-1)$-product 
$(-_{(-1)}-):\,V\otimes V\to V$, $a\otimes b\mapsto a_{(-1)}b$,
is a unital (with unity $\vac$), associative, differential (with derivation $T$) product,
satisfying the following ``quantum commutativity'' relation
(cf. equation \eqref{ScommutativeBVA+AssociativityCommRelations})
$$
b_{(-1)}a
=
(-_{(-1)}-)
Res_z z^{-1} 
(e^{zT} \otimes 1) \mc{S}(z) (a \otimes b)
\,.
$$
Furthermore, the state-field correspondence $Y(z)$ is given by
$$
Y(z)(a\otimes b)=(e^{zT}a)_{(-1)}b
\,.
$$
\end{theorem}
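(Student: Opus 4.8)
The plan is to prove Theorem~\ref{ScommutativeBVAHolomBVAIntro} as the quantum analogue of Theorem~\ref{CommutativityThmIntro}, proceeding in three stages: the equivalence of $\mc S$-commutativity with holomorphy $Y(z)(a\otimes b)\in V[[z]]$, the algebraic properties of the $(-1)$-product, and the reconstruction formula for $Y$.

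\textbf{Step 1: Holomorphy $\iff$ $\mc S$-commutativity.} First I would show that if $Y(z)(a\otimes b)\in V[[z]]$ for all $a,b$, then in the $\mc S$-locality \eqref{VK1.7} one may take $N=0$: since $Y$ is holomorphic, both $Y(z)(1\otimes Y(w))\mc S^{12}(z-w)(a\otimes b\otimes c)$ and $Y(w)(1\otimes Y(z))(b\otimes a\otimes c)$ lie in $V[[z,w]][[h]]$ (using that $\mc S(z-w)$ involves only nonnegative powers after the appropriate expansion, or more carefully that $\mc S\equiv 1\bmod h$ and an induction on the order in $h$ as in the braided vertex algebra axioms), so the factor $(z-w)^N$ may be cancelled, giving \eqref{S-commutativityIntro}. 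Conversely, if $\mc S$-commutativity holds, I would set $c=\vac$ and use the vacuum axiom \eqref{VK1.2} on the inner $Y(w)$-factor on the right-hand side: $Y(w)(1\otimes Y(z))(b\otimes a\otimes\vac)\equiv Y(w)(b\otimes a)\bmod wV[[w]]$; comparing with the left-hand side, whose inner factor $Y(w)(\dots\otimes\vac)$ again produces something in $V[[w]]$ by the vacuum axiom, one extracts that $Y(z)(a\otimes b)$ has no singular part, arguing order by order in $h$ (at order $0$ this is exactly Theorem~\ref{CommutativityThmIntro}, and the braiding corrections $\mc S-1$ are higher order). This mod-$h$ reduction plus induction is the standard device and I expect it to go through, appealing to the definition of braided vertex algebra recalled around \eqref{VK1.7}.

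\textbf{Step 2: Properties of the $(-1)$-product.} Assuming holomorphy, I define $a_{(-1)}b=\mathrm{Res}_z z^{-1}Y(z)(a\otimes b)=Y(z)(a\otimes b)|_{z=0}$. Unitality (with unity $\vac$) and differentiality (with derivation $T$) follow directly from the vacuum axiom \eqref{VK1.2} and translation covariance \eqref{VK1.4}--\eqref{VK1.5} exactly as in the classical case; the reconstruction $Y(z)(a\otimes b)=(e^{zT}a)_{(-1)}b$ follows from \eqref{VK1.5} by integrating $\spd z Y(z)(a\otimes b)=Y(z)(Ta\otimes b)$, i.e. $Y(z)(a\otimes b)=e^{zT_{\text{left}}}Y(0)(a\otimes b)$ where $e^{zT}$ acts on the first argument — I should do this reconstruction first, since associativity and quantum commutativity will be read off from it. For associativity of $(-1)$: apply the associativity relation \eqref{VK1.8} (which holds modulo $h^M$ for the quantum vertex algebra) with holomorphic $Y$, so that $(z+w)^N$ can again be cancelled, and then specialize $z=w=0$; using $Y(z)(a\otimes b)=(e^{zT}a)_{(-1)}b$ this yields $(a_{(-1)}b)_{(-1)}c=a_{(-1)}(b_{(-1)}c)$ modulo $h^M$, hence exactly since $M$ is arbitrary and $V$ is topologically free. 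For the quantum commutativity relation $b_{(-1)}a=(-_{(-1)}-)\mathrm{Res}_z z^{-1}(e^{zT}\otimes 1)\mc S(z)(a\otimes b)$: start from the quantum skewsymmetry \eqref{eq:ginocchio1}, $Y(z)\mc S(z)(a\otimes b)=e^{zT}Y(-z)(b\otimes a)$, apply $\mathrm{Res}_z z^{-1}$, and use that the right-hand side is $e^{zT}Y(-z)(b\otimes a)|_{z=0}=Y(0)(b\otimes a)=b_{(-1)}a$ while the left-hand side, upon writing $Y(z)=$ left-multiplication by $e^{zT}(\,\cdot\,)$ then taking the product, becomes $(-_{(-1)}-)\mathrm{Res}_z z^{-1}(e^{zT}\otimes 1)\mc S(z)(a\otimes b)$ after moving the $e^{zT}$ through to act on the first tensor factor.

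\textbf{Main obstacle.} The delicate point is Step~1's converse direction and the cancellation of $(z+w)^N$ in Step~2: one must be careful that the braiding $\mc S(z-w)$, expanded as $\iota_{z,w}$ suggests, does not reintroduce singular or negative powers, and that ``holomorphic'' is genuinely preserved under composition with $1\otimes Y(w)$ and with $\mc S^{12}$ — this is where the assumption that $Y$ is a \emph{topological} state-field correspondence valued in $V((z))$ together with $\mc S\equiv 1\bmod h$ is essential, and the cleanest route is the $h$-adic induction: everything is controlled modulo $h$ by the classical Theorem~\ref{CommutativityThmIntro}, and each higher order in $h$ is corrected by lower-order data together with the braiding terms, which vanish mod $h$. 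I would therefore structure the write-up so that the mod-$h$ base case cites Theorem~\ref{CommutativityThmIntro} verbatim, and the inductive step is a bookkeeping argument on the $h$-filtration. The remaining identities are then formal manipulations of residues and of $e^{zT}$ acting on tensor factors, entirely parallel to the vertex algebra case, so no further essential difficulty is expected.
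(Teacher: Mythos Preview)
Your Step 2 is essentially correct and matches the paper (Lemma~\ref{lem:cauchy} and Theorem~\ref{thm:scomm-bva}): the reconstruction $Y(z)(a\otimes b)=(e^{zT}a)_{(-1)}b$ via the Cauchy problem for $\spd z Y(z)=Y(z)(T\otimes 1)$, associativity by cancelling $(z+w)^N$ in the holomorphic setting and specializing $z=w=0$, and quantum commutativity from $Y\mc S=Y^{op}$ combined with the reconstruction formula are exactly the paper's arguments.

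Step 1, however, has genuine problems. In the ``if'' direction your claim that both sides of \eqref{S-commutativityIntro} lie in $V[[z,w]][[h]]$ is \emph{false}: the braiding $\mc S(z)$ is allowed poles (it takes values in $V\widehat\otimes V\widehat\otimes\mb K((z))[[h]]$), so after $\iota_{z,w}$-expansion the left side lies only in $V((z))((w))$ modulo $h^M$, not in $V[[z,w]]$. Your fallback $h$-adic induction can be made to work, but when you unwind it each step still requires that $(z-w)$ be a non-zero-divisor in $(V/hV)((z))((w))$---and that is precisely the paper's direct, one-line argument: both sides of the $\mc S$-locality lie in $V((z))((w))$ modulo $h^M$, multiplication by $(z-w)^N$ is injective there, so cancel. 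No induction is needed.

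In the ``only if'' direction you have confused the roles of $z$ and $w$: the inner factor on the right-hand side of \eqref{S-commutativityIntro} with $c=\vac$ is $Y(z)(a\otimes\vac)=e^{zT}a$, a series in $z$, not a $w$-series modulo $wV[[w]]$ as you wrote. The $h$-adic induction is again an unnecessary detour. The paper (Theorem~\ref{ScommutativeBVAHolomBVA}) proceeds directly: set $c=\vac$, use $Y(\cdot\otimes\vac)=e^{\,\cdot\, T}$ on both inner factors, multiply by $z^{-1}$ and take $\Res_z$ to obtain
\[
Y(w)(b\otimes a)=\Res_z\, z^{-1}\, Y(z)\big(1\otimes e^{wT}\big)\,\iota_{z,w}\mc S(z-w)(a\otimes b),
\]
which is manifestly in $V[[w]]$ because both $\iota_{z,w}$ and $e^{wT}$ introduce only nonnegative powers of $w$. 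This single residue replaces your entire inductive scheme.
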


A powerful tool for construction of vertex algebras is the existence or the extension theorem 
(see {\cite[Thm.4.5]{K}} or {\cite[Thm.1.5]{DSK06}}, \cite{K15}). 
Unfortunately we were unable to find its quantum analogue. 
Thus, construction of quantum vertex algebras remains a difficult problem.

Interesting problems are to construct non-trivial quantizations of commutative vertex algebras
and of affine vertex algebras associated to any simple Lie algebra.
Beyond \cite{EK5}, some examples were constructed in \cite{JKMY} and in \cite{BJK} respectively.

\medskip

The paper is organized as follows.
Section \ref{FieldAlgebrasVAsSection} is devoted to vertex algebras.
We start by reviewing in Section \ref{CalculusOfFormalDistributionsSection} 
the basic calculus of formal distributions,
in Section \ref{FieldAlgebrasSection} the properties of field algebras,
and in Section \ref{sec:2.3} the definition of vertex algebras.
The remaining subsections contain
several characterizations of vertex algebras given by Theorem \ref{CharacterizationVAs},
and the description of commutative vertex algebras.
Next, we switch in Section \ref{sec:3} to quantum vertex algebras.
After introducing the necessary $h$-adic topology,
we give the definitions of braided vertex algebras (Definition \ref{BVADef})
and quantum vertex algebras (Definition \ref{QVADef}).
We show in this Section that, modulo the kernel of $Y$,
all the conditions imposed in \cite{EK5} on $\mc S$
do hold (Proposition \ref{prop:non-deg}),
and the hexagon relation follows from associativity (Proposition \ref{July2017ThmRem1}).
We also prove, within this section, some preliminary results on braided and
quantum vertex algebras, and the quantum analogue 
of Goddard's uniqueness Theorem 
(cf. Proposition \ref{BraidedGoddard}).
In Section \ref{S-commutativeBVAs} we consider the special case
of $\mc S$-commutative braided and quantum vertex algebras,
see Theorems \ref{ScommutativeBVAHolomBVA} and \ref{thm:scomm-bva}.
Finally, in Section \ref{sec:5}
we introduce quantum $n$-products and
prove the quantum Borcherds identity,
see Theorem \ref{associativeBVABorcherdsIdentities}.
We conclude by proving the main characterizations of quantum vertex algebras,
see Theorem \ref{thm:final}.
This paper is based on the Ph.D thesis of the second author \cite{Gar}.

Throughout the paper all vector spaces, tensor products, hom's, etc.
are over a field $\mb F$ of characteristic zero, unless otherwise specified.

\medskip

\emph{Acknowledgments}
We are deeply grateful to Pavel Etingof for discussions and for explaining us 
the whole theory of quantum vertex algebras.
The research was partially conducted during the authors visits 
to both MIT and Sapienza University of Rome;
we are grateful to both these institutions for their kind hospitality.
The first author was partially supported 
by the national PRIN fund n.\ 2015ZWST2C$\_$001
and the University funds n. RM116154CB35DFD3 and RM11715C7FB74D63.
The second author was partially supported by the grant UMI-MIT
and an INdAM GNSAGA grant.
The third author was partially supported by the Bert and Ann Kostant fund.

\section{Field algebras and vertex algebras}
\label{FieldAlgebrasVAsSection}

In this section we review the definitions of a field algebra
and of a vertex algebra, following \cite{BK,K} (see also \cite{K15}
for a more recent exposition).

\subsection{Calculus of formal distributions}
\label{CalculusOfFormalDistributionsSection}

Given a vector space $V$,
we let $V[[z, z^{-1}]]$ be the space of bilateral formal power series with coefficients in $V$; they are called formal distributions.
A \emph{quantum field} over $V$ is a formal distribution $a(z)\in (\End V)[[z,z^{-1}]]$ with coefficients in $\End V$, such that $a(z)v\in V((z))$ for every $v\in V$. Hereafter $V((z)) = V[[z]][z^{-1}]$ stands for the space of Laurent series with coefficients in $V$.

Recall that the \emph{formal delta distribution} $\delta(z,w)$ is a formal distribution in $z$ and $w$ with coefficients in $\mb K$ defined as follows:
\begin{equation}\label{eq:delta}
\delta(z,w) 
= 
\sum_{m \in \mathbb{Z}} z^{-m-1}w^m
\,.
\end{equation}
It can be obtained as
\begin{equation}\label{eq:iota}
\delta(z,w) = \gse{z}{w} \frac{1}{z-w}-\gse{w}{z}\frac{1}{z-w}
\,,
\end{equation}
where $\gse zw$ (resp. $\gse wz$) denotes the geometric series expansion
in the domain $|z|>|w|$ (resp. $|w|>|z|$).
Recall also that,
for an arbitrary formal distribution $a(z)$, we have
\begin{equation}\label{eq:res}
Rez_z \big( a(z) \delta(z,w) \big) = a(w)
\,,
\end{equation}
where $\Res_z$ denotes the coefficient of $z^{-1}$.

\begin{definition}\label{def:local}
A formal distribution (in two variables with coefficients in $V$) $a(z,w) \in V[[z^{\pm 1}, w^{\pm 1}]]$ is called \emph{local} 
if there exists $N \in \mb Z_{\geq 0}$ such that $(z-w)^N a(z,w) = 0$.
A pair $a(z), b(z) \in (\End V)[[z, z^{-1}]]$ of formal distributions is called \emph{local} if there exists $N \in\mb Z_{\geq 0}$ such that
$$
(z-w)^N [a(z),b(w)] = 0
\,,
$$
while they are \emph{local on} $v \in V$ if there exists $N \in\mb Z_{\geq 0}$ such that
$$
(z-w)^N [a(z),b(w)]v = 0
\,.
$$
\end{definition}
\begin{theorem}[Decomposition Theorem {\cite[Cor.2.2]{K}}]
\label{KDecThm}
Any local formal distribution $a(z,w) \in V[[z^{\pm 1}, w^{\pm 1}]]$ can be uniquely decomposed as
\begin{equation}
a(z,w) = \sum_{j = 0}^N c^j(w) \frac{\partial^{j}_w \delta(z,w)}{j!},
\end{equation}
where $c^j(w) \in V[[w^{\pm 1}]]$, and one has
\begin{displaymath}
c^j(w) = Res_{z} \big( (z-w)^j a(z,w) \big).
\end{displaymath}
\end{theorem}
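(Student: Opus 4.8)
The plan is to establish existence by induction on the locality order and to read off uniqueness, together with the explicit formula for the coefficients, from a single residue computation. Everything rests on a handful of elementary identities for the delta distribution. From $(z-w)\delta(z,w)=0$, differentiating repeatedly in $w$, one gets $(z-w)\partial_w^j\delta(z,w)=j\,\partial_w^{j-1}\delta(z,w)$ for $j\geq 1$, and hence by iteration
\begin{equation*}
(z-w)^k\,\frac{\partial_w^j\delta(z,w)}{j!}=\frac{\partial_w^{j-k}\delta(z,w)}{(j-k)!}\quad(0\leq k\leq j),\qquad (z-w)^k\,\partial_w^j\delta(z,w)=0\quad(k>j).
\end{equation*}
Combined with $\Res_z\delta(z,w)=1$ and $\Res_z\partial_w^j\delta(z,w)=0$ for $j\geq1$, this shows that $\Res_z\!\big((z-w)^k\,\partial_w^j\delta(z,w)/j!\big)$ equals $1$ if $j=k$ and $0$ otherwise. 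Therefore, granting any decomposition $a(z,w)=\sum_{j=0}^{N}c^j(w)\,\partial_w^j\delta(z,w)/j!$, applying $\Res_z\big((z-w)^k\,\cdot\,\big)$ to both sides yields immediately $c^k(w)=\Res_z\!\big((z-w)^k a(z,w)\big)$; this establishes the stated coefficient formula and, in particular, uniqueness.

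It remains to prove existence, by induction on $N$ with $(z-w)^N a(z,w)=0$. The base case $N=1$ asserts that $(z-w)a(z,w)=0$ forces $a(z,w)=c^0(w)\delta(z,w)$: writing $a(z,w)=\sum_{m,n}a_{m,n}z^m w^n$, the relation $(z-w)a=0$ says $a_{m-1,n}=a_{m,n-1}$, so $a_{m,n}$ depends only on $m+n$, which is exactly the shape of $c^0(w)\delta(z,w)$ with $c^0(w)=\Res_z a(z,w)$. For the inductive step, set $b(z,w):=(z-w)a(z,w)$; then $(z-w)^{N-1}b=0$, so by the inductive hypothesis $b(z,w)=\sum_{j=0}^{N-2}c^{j+1}(w)\,\partial_w^j\delta(z,w)/j!$ with $c^{j+1}(w)=\Res_z\!\big((z-w)^j b(z,w)\big)=\Res_z\!\big((z-w)^{j+1}a(z,w)\big)$. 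Using $(z-w)\,\partial_w^{j+1}\delta(z,w)/(j+1)!=\partial_w^j\delta(z,w)/j!$, one checks that $\tilde a(z,w):=a(z,w)-\sum_{j=1}^{N-1}c^j(w)\,\partial_w^j\delta(z,w)/j!$ satisfies $(z-w)\tilde a=0$; the base case then gives $\tilde a(z,w)=c^0(w)\delta(z,w)$ with $c^0(w)=\Res_z\tilde a(z,w)=\Res_z a(z,w)$, since the subtracted terms contribute nothing to the $z^{-1}$-coefficient. Rearranging produces the claimed decomposition, where one may harmlessly append the term $j=N$ since its coefficient $\Res_z((z-w)^N a)$ vanishes.

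All of this is routine, and I do not anticipate a genuine obstacle: the result is classical, and every step uses only multiplication by the polynomial $(z-w)^k$, the operator $\partial_w$, and the $\mb K$-linear functional $\Res_z$, all well defined on bilateral formal distributions in $V[[z^{\pm1},w^{\pm1}]]$, with no rearrangement of doubly-infinite sums. The only care needed is the bookkeeping with the coefficients $a_{m,n}$ in the base case, and organizing the induction — peeling off one factor of $z-w$ at a time — so that the coefficient formula drops out automatically rather than requiring a separate verification.
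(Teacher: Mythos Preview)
Your proof is correct and is essentially the standard argument; however, the paper does not give its own proof of this theorem---it merely cites it from \cite[Cor.~2.2]{K}. Your approach (peeling off one factor of $z-w$ at a time via induction, together with the orthogonality $\Res_z\big((z-w)^k\partial_w^j\delta(z,w)/j!\big)=\delta_{j,k}$ to pin down the coefficients) is exactly the classical proof one finds in the literature, so there is nothing to compare.
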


\begin{lemma}[{\cite[Lem.2.1]{L03}}]
\label{LiLem.2.1}
Let $a(z,w) \in V((z))((w))$, $b(z,w) \in V((w))((z))$, $c(z,w) \in V((w))((z))$.
Then
\begin{equation}
\label{LiJacobi}
\begin{split}
&\gse{z}{w}\delta(x,z-w)\ a(z,w) - \gse{w}{z} \delta(x,z-w)b(z,w) = \gse{z}{x} \delta(w,z-x)\ c(x,w)
\end{split}
\end{equation}
if and only if 
\begin{equation}\label{maratona}
(z-w)^N a(z,w) = (z-w)^N b(z,w)
\,\text{ and }\,
(x+w)^N \gse{x}{w} a(x+w,w) = (x+w)^N c(x,w)
\end{equation}
for some $N\gg0$.
\end{lemma}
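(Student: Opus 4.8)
The plan is to prove the two implications separately, using the formal delta function identities \eqref{eq:delta}--\eqref{eq:res} as the main computational tools, together with the expansion $\gse{z}{w}\frac{1}{z-w}-\gse{w}{z}\frac{1}{z-w}=\delta(z,w)$ and its consequence that multiplying a delta function by a suitable power of $(z-w)$ collapses its support.

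First I would prove that \eqref{LiJacobi} implies \eqref{maratona}. Applying $\Res_x$ to both sides of \eqref{LiJacobi} (after multiplying by $x^m$ for various $m\geq 0$) extracts the coefficients of the expansion of $\delta(x,z-w)$ in $x$; concretely, taking $\Res_x\big(x^0\,\cdot\,\big)$ and using $\Res_x \delta(x,z-w)=1$ in the first two terms, while $\Res_x\big(x^0\,\delta(w,z-x)\big)$ picks out, via \eqref{eq:res}, the value of $c(x,w)$ at the point forced by $\delta(w,z-x)$. More systematically, I would multiply \eqref{LiJacobi} by $(z-w)^N$ for $N\gg0$: since $(z-w)^N\gse{z}{w}\delta(x,z-w)=(z-w)^N\gse{w}{z}\delta(x,z-w)$ (both equal $(z-w)^N$ times the same delta once $N$ is large enough to kill the ambiguity, because $\delta(x,z-w)$ as a distribution in $x$ does not see the $z$-$w$ expansion ambiguity after enough powers), the left side becomes $\delta(x,z-w)(z-w)^N\big(a(z,w)-b(z,w)\big)$ up to expansion subtleties, while the right side, carrying the factor $\delta(w,z-x)$, must match; comparing supports/coefficients in $x$ forces $(z-w)^N a(z,w)=(z-w)^N b(z,w)$, which is the first equation of \eqref{maratona}. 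For the second equation, I would instead take $\Res_z$ of \eqref{LiJacobi} after multiplying by an appropriate power, or substitute the constraint $z=x+w$ dictated by $\delta(w,z-x)$: applying $\Res_z\big(\cdot\big)$ and using $\Res_z\big(\gse{z}{w}\delta(x,z-w)f(z,w)\big)=\gse{x}{w}f(x+w,w)$ gives $(x+w)^N\gse{x}{w}a(x+w,w)=(x+w)^N c(x,w)$ after accounting for the $(x+w)^N$ factors needed to make everything a well-defined Laurent series; the middle term drops out because $b(z,w)\in V((w))((z))$ and $\gse{w}{z}\delta(x,z-w)$ has the opposite expansion, so its $\Res_z$ against this kernel vanishes.

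Conversely, to prove \eqref{maratona} implies \eqref{LiJacobi}, I would start from the first relation $(z-w)^N(a-b)=0$, which by the Decomposition Theorem~\ref{KDecThm} means $a(z,w)-b(z,w)=\sum_{j=0}^{N-1} d^j(w)\frac{\spd{w}^j\delta(z,w)}{j!}$ for some $d^j(w)\in V[[w^{\pm 1}]]$ — wait, more carefully, since $a\in V((z))((w))$ and $b\in V((w))((z))$ live in different spaces, their difference is a genuine local distribution and this decomposition applies. Then $\gse{z}{w}\delta(x,z-w)a(z,w)-\gse{w}{z}\delta(x,z-w)b(z,w)$ can be rewritten as $\delta(x,z-w)$-type kernels acting on $a$ minus on $b$, and using the standard identity $\gse{z}{w}\delta(x,z-w)-\gse{w}{z}\delta(x,z-w)$ applied to the common part, plus $\gse{z}{w}\delta(x,z-w)\frac{\spd{w}^j\delta(z,w)}{j!}$ being expressible via $\delta(w,z-x)$ after shifting (this is where the second relation of \eqref{maratona} enters to identify the coefficient with $c(x,w)$), I would reassemble the right-hand side $\gse{z}{x}\delta(w,z-x)c(x,w)$. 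The key identity here is the ``associativity'' of delta functions $\gse{z}{w}\delta(x,z-w)\delta(w,z-x)$-type relabeling, i.e. $\gse{z}{w}\delta(x,z-w)g(z)=\gse{z}{x}\delta(w,z-x)\gse{x}{w}g(x+w)$ for appropriate $g$, which lets one trade the $x=z-w$ substitution for the $z=x+w$ substitution.

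The main obstacle I anticipate is bookkeeping the expansion directions rigorously: the three terms of \eqref{LiJacobi} live in $V((z))((w))$, $V((w))((z))$, and $V((w))((x))$ (or similar) after choosing which of $\gse{z}{w}$, $\gse{w}{z}$, $\gse{z}{x}$ is applied, and one must check that each manipulation (applying $\Res$, multiplying by $(z\pm w)^N$, substituting $z=x+w$) is legitimate in the relevant completion and that no hidden ambiguity is introduced. In particular, showing that the factor $(x+w)^N$ is exactly what is needed to clear denominators in $\gse{x}{w}a(x+w,w)$ — neither too few powers (leaving an ill-defined series) nor caring about too many — requires tracking the pole orders of $a,b,c$ carefully; this is the routine-but-delicate heart of the argument, and I would organize it by fixing $N$ large enough at the outset to simultaneously handle all three distributions.
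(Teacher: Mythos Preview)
Your residue approach for the ``only if'' direction is correct and is exactly what the paper does (the argument is written out for the $h$-adic version, Lemma~\ref{LiLem.2.1b}): multiply \eqref{LiJacobi} by $x^m$ and take $\Res_x$ to obtain the first equation of \eqref{maratona} (the right side vanishes for $m\gg0$ because both $\gse{z}{x}\delta(w,z-x)$ and $x^m c(x,w)$ carry only nonnegative powers of $x$), and multiply by $z^m$ and take $\Res_z$ to obtain the second (the middle term vanishes because both $\gse{w}{z}\delta(x,z-w)$ and $z^m b(z,w)$ carry only nonnegative powers of $z$). Your explanation of why the middle term drops is the right one once made precise this way.

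However, your alternative ``more systematic'' route contains a false step. The claim that $(z-w)^N\gse{z}{w}\delta(x,z-w)=(z-w)^N\gse{w}{z}\delta(x,z-w)$ for $N\gg0$ is simply wrong: by the basic identity $(z-w)^N\delta(x,z-w)=x^N\delta(x,z-w)$, multiplying by $(z-w)^N$ is the same as multiplying by $x^N$, and
\[
(z-w)^N\big(\gse{z}{w}-\gse{w}{z}\big)\delta(x,z-w)
= x^N\big(\gse{z}{w}-\gse{w}{z}\big)\delta(x,z-w)
= x^N\, e^{x\spd{w}}\delta(z,w),
\]
which is nonzero for every $N$. No power of $(z-w)$ kills the expansion discrepancy, because the sum over $m$ in $\delta(x,z-w)=\sum_m x^{-m-1}(z-w)^m$ is bilateral.

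For the ``if'' direction, your plan to split off a ``common part'' and feed $a-b$ into the Decomposition Theorem hits a well-definedness problem: the intermediate object $\gse{w}{z}\delta(x,z-w)\cdot a(z,w)$ (or $\gse{z}{w}\delta(x,z-w)\cdot b(z,w)$) does not make sense, since $a\in V((z))((w))$ allows the $z$-pole order to grow with the $w$-degree and the product involves genuinely infinite sums. The paper avoids this by first multiplying \eqref{LiJacobi} by $(z-w)^N$; then the first equation of \eqref{maratona} lets one replace $(z-w)^N b$ by $(z-w)^N a$ \emph{after} the factor is attached, yielding
\[
\big(\gse{z}{w}-\gse{w}{z}\big)\delta(x,z-w)\cdot(z-w)^N a(z,w)
=\gse{w}{x}\delta(z,w+x)\cdot(z-w)^N a(z,w),
\]
which is well-defined because $(z-w)^N a(z,w)=(z-w)^N b(z,w)$ lives in $V((z))((w))\cap V((w))((z))$. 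A further factor $z^L$ makes the substitution $z\mapsto w+x$ via the delta legitimate, the second equation of \eqref{maratona} converts $a$ to $c$, and the auxiliary factors $x^N$, $z^L$ are stripped off at the end using $\delta(x,t)\,t^N=\delta(x,t)\,x^N$. This is the missing mechanism that ties the second half of \eqref{maratona} to the right-hand side of \eqref{LiJacobi}.
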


In Section \ref{sec:5} we will need and we will give a proof of an analogue of Lemma \ref{LiLem.2.1}
in the context of a topologically free $\mb K[[h]]$-module,
see Lemma \ref{LiLem.2.1b}.

\subsection{Field algebras}
\label{FieldAlgebrasSection}
In this subsection we recall the definition of a field algebra and its properties following \cite{BK}.
\begin{definition}[\cite{BK}]
\label{pointedVectorSpaceWithStateFieldCorrespondenceDef}
A \emph{state-field correspondence} on a pointed vector space $(V, \vac)$ 
is a linear map $Y: V \otimes V \rightarrow V ((z))$, 
$a \otimes b \mapsto Y(z)(a \otimes b)$,
satisfying
\begin{enumerate}[(i)]
\item (vacuum axioms) $Y(z)(\vac \otimes a) = a, Y(z)(a \otimes \vac) \in a + V[[z]]z$;
\item (translation covariance 1) 
$TY(z)(a \otimes b) - Y(z) (a \otimes Tb) 
= \spd{z}Y(z)(a \otimes b)$, where $T(a):=\spd{z}Y(z)(a\otimes\vac)|_{z=0}$
(the translation operator);
\item (translation covariance 2) 
$Y(z)(Ta \otimes b) = \spd{z}Y(z)(a \otimes b)$.
\end{enumerate}
\end{definition}
\noindent
With an abuse of notation, we shall at times denote by $Y$ also the map 
$Y:\,V\to\End[[z,z^{-1}]]$, $a\mapsto Y(a,z)=\sum_{k \in \mathbb{Z}} a_{(k)} z^{-k-1}$, 
such that $Y(a,z)b=Y(z)(a\otimes b)$.
Note that $Y(a,z)$ is a \emph{quantum field}, i.e. $Y(a,z)b \in V((z))$ for any $b \in V$.
\begin{proposition}[see e.g. {\cite[Prop.2.7]{BK}}]
\label{BKProp.2.7}
If $Y: V \otimes V \rightarrow V ((z))$ satisfies conditions (i) and (ii) 
of Definition \ref{pointedVectorSpaceWithStateFieldCorrespondenceDef},
then
\begin{enumerate}[(a)]
\item \label{BKProp.2.7p1} $Y(z)(a \otimes \vac) = e^{zT}a$;
\item \label{BKProp.2.7p2} $e^{wT}Y(z) \left( 1 \otimes e^{-wT} \right) = \gse{z}{w} Y(z+w)$.
\end{enumerate}
If, moreover, $Y$ is a state-field correspondence, then
\begin{enumerate}[(a)]
\setcounter{enumi}{2}
\item \label{BKProp.2.7p3} $Y(z) \big(e^{wT} \otimes 1\big) = \gse{z}{w} Y(z+w)$
\end{enumerate}
\end{proposition}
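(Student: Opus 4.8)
\textbf{Proof plan for Proposition \ref{BKProp.2.7}.}

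The plan is to derive parts (a)--(c) from the vacuum axioms and translation covariance, exactly as one does in the ordinary vertex algebra setting, since the field algebra axioms are the same formal identities. First I would prove (a). Set $f(z) = Y(z)(a \otimes \vac) \in V[[z]]$ (by the second vacuum axiom it lies in $a + zV[[z]]$). Applying translation covariance 1 with $b = \vac$, and using $T\vac = 0$ (which follows from the definition $T\vac = \spd z Y(z)(\vac \otimes \vac)|_{z=0} = \spd z \vac|_{z=0} = 0$, using the first vacuum axiom), we get $T f(z) = \spd z f(z)$. Together with the initial condition $f(0) = a$, this ODE in the variable $z$ has the unique solution $f(z) = e^{zT}a$ in $V[[z]]$. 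Concretely one compares coefficients: writing $f(z) = \sum_{n \geq 0} f_n z^n$, the relation gives $T f_n = (n+1) f_{n+1}$, so $f_n = \frac{1}{n!} T^n f_0 = \frac{1}{n!}T^n a$, which is (a).

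Next I would prove (b). Both sides are elements of $\Hom(V \otimes V, V((z)))$ with an extra formal variable $w$; I would check the identity by showing both sides satisfy the same first-order ODE in $w$ with the same value at $w=0$. At $w=0$ both sides equal $Y(z)$. Differentiating the left-hand side $g(w) := e^{wT} Y(z)(1 \otimes e^{-wT})$ in $w$ gives $\spd w g(w) = e^{wT}\big(T Y(z)(1 \otimes e^{-wT}) - Y(z)(1 \otimes T e^{-wT})\big) = e^{wT}\big(T Y(z) - Y(z)(1 \otimes T)\big)(1 \otimes e^{-wT})$, and by translation covariance 1 the inner bracket is $\spd z Y(z)$; hence $\spd w g(w) = e^{wT}\big(\spd z Y(z)\big)(1 \otimes e^{-wT}) = \spd z g(w)$. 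On the other hand $\gse z w Y(z+w)$ obviously satisfies $\spd w = \spd z$ applied to it (it is a function of $z+w$), and agrees at $w=0$. Since both sides lie in a space where such an ODE with given initial value has a unique solution (expand in powers of $w$ and match coefficients, as in part (a)), they coincide. This proves (b). Part (c) is proved by the identical argument, now using translation covariance 2: set $h(w) := Y(z)(e^{wT} \otimes 1)$, so $h(0) = Y(z)$, and $\spd w h(w) = Y(z)(Te^{wT} \otimes 1) = \spd z\, Y(z)(e^{wT}\otimes 1) = \spd z h(w)$ by translation covariance 2, and again uniqueness of solutions forces $h(w) = \gse z w Y(z+w)$.

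The only mildly delicate point is the uniqueness-of-ODE-solution argument used in (b) and (c): one must make sure the relevant objects genuinely lie in a space of formal power series in $w$ so that comparing coefficients of $w^n$ is legitimate. For $Y(z)(e^{wT}\otimes 1)$ and $e^{wT}Y(z)(1\otimes e^{-wT})$ this is clear since $e^{\pm wT}$ is a well-defined operator on each fixed vector as a power series in $w$, and $\gse z w Y(z+w)$ is by definition the expansion of $Y(z+w)$ as a power series in $w$ with coefficients in $(\End V)((z))$ acting appropriately; so all four expressions live in $\Hom(V\otimes V, V((z)))[[w]]$ (after applying to a fixed $b\in V$ one lands in $V((z))[[w]]$), and coefficient-wise comparison is valid. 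I expect no real obstacle here beyond keeping this bookkeeping straight; the argument is entirely parallel to the classical vertex-algebra case and does not use locality or any braiding.
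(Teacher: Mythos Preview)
Your proof is correct and follows the standard approach. The paper does not give a proof of this proposition (it simply cites \cite{BK}), but it reproduces the argument for the quantum analogues in Lemmas \ref{QK15Lem.1.2} and \ref{QK15Lem.1.2b}: part (a) is exactly your ODE argument, and parts (b) and (c) are phrased there as ``an immediate consequence of the Taylor expansion and the translation covariance assumption,'' which is precisely your $\partial_w = \partial_z$ computation written more tersely.
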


\begin{proposition}[{\cite[Prop.2.8]{BK}}]
\label{BKProp.2.8}
Given a state-field correspondence $Y$, define
\begin{equation}\label{eq:Yop}
Y^{op}(z)(a \otimes b) = e^{zT}Y(-z)(b \otimes a).
\end{equation}
Then $Y^{op}$ is also a state-field correspondence.
\end{proposition}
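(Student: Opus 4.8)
\textbf{Proof plan for Proposition \ref{BKProp.2.8}.}

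The plan is to verify directly that $Y^{op}$, as defined by \eqref{eq:Yop}, satisfies the three axioms (i)--(iii) of Definition \ref{pointedVectorSpaceWithStateFieldCorrespondenceDef} with respect to the \emph{same} pointed vector space $(V,\vac)$ and the \emph{same} translation operator $T$. The first point to settle is that $Y^{op}(z)(a\otimes b)$ indeed lies in $V((z))$: since $Y(-z)(b\otimes a)\in V((z))$ and $e^{zT}$ is applied afterwards, one needs to know that $e^{zT}$ preserves the property of being a Laurent series in $z$; this follows because $T$ acts locally nilpotently in the relevant sense — more precisely, applying $e^{zT}=\sum_{k\ge 0}\frac{z^kT^k}{k!}$ to a Laurent series $\sum_{n\ge -M}v_n z^n$ produces, in each fixed total degree, only finitely many contributions, so the result is again in $V((z))$.

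First I would check the vacuum axioms. Using $Y(-z)(\vac\otimes a)=a$ (axiom (i) for $Y$) we get $Y^{op}(z)(\vac\otimes a)=e^{zT}a$, which is not $a$, so one must instead compute $Y^{op}(z)(a\otimes\vac)$ and $Y^{op}(z)(\vac\otimes a)$ the right way round. Actually the correct check is: $Y^{op}(z)(\vac\otimes a)=e^{zT}Y(-z)(a\otimes\vac)=e^{zT}e^{-zT}a=a$ by Proposition \ref{BKProp.2.7}\eqref{BKProp.2.7p1}; and $Y^{op}(z)(a\otimes\vac)=e^{zT}Y(-z)(\vac\otimes a)=e^{zT}a\in a+V[[z]]z$. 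So axiom (i) holds. Next I would verify that the translation operator attached to $Y^{op}$ coincides with $T$: by definition it is $\spd z Y^{op}(z)(a\otimes\vac)|_{z=0}=\spd z(e^{zT}a)|_{z=0}=Ta$, as required.

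It remains to prove translation covariance 2 for $Y^{op}$, i.e. $Y^{op}(z)(Ta\otimes b)=\spd z Y^{op}(z)(a\otimes b)$ (then covariance 1 follows, as noted after \eqref{VK1.5}, since $T$ will automatically be a derivation). Expanding the left side: $Y^{op}(z)(Ta\otimes b)=e^{zT}Y(-z)(b\otimes Ta)$. Now I would use translation covariance 1 for $Y$ in the form $Y(-z)(b\otimes Ta)=TY(-z)(b\otimes a)-\spd{(-z)}Y(-z)(b\otimes a)=TY(-z)(b\otimes a)+\spd{z}\big(Y(-z)(b\otimes a)\big)$ (chain rule in the variable $-z$). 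Substituting and using that $e^{zT}T=\spd z e^{zT}=Te^{zT}$ commutes appropriately, the right side becomes $e^{zT}TY(-z)(b\otimes a)+e^{zT}\spd z Y(-z)(b\otimes a)$, which is exactly $\spd z\big(e^{zT}Y(-z)(b\otimes a)\big)=\spd z Y^{op}(z)(a\otimes b)$ by the Leibniz rule. The main obstacle, such as it is, is purely bookkeeping: keeping straight the sign from differentiating in $-z$ versus $z$ and making sure the operator $e^{zT}$ is moved past $T$ and past $\spd z$ correctly; there is no conceptual difficulty, and no locality or field-algebra hypothesis is needed for this statement.
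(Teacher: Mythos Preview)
Your argument is correct and is the standard direct verification (the paper itself does not reproduce a proof here, merely citing \cite{BK}). One small point deserves a line of justification: your parenthetical claim that translation covariance~1 follows from covariance~2 ``since $T$ will automatically be a derivation'' is not quite self-evident. The remark after \eqref{VK1.5} says that covariance~1 and~2 are equivalent \emph{provided} $T$ is a derivation of the product in question; you have not yet established that $T$ is a derivation of $Y^{op}$. This is immediate---since $T$ commutes with $e^{zT}$ and is a derivation of $Y$, one has
\[
T\,Y^{op}(z)(a\otimes b)=e^{zT}T\,Y(-z)(b\otimes a)=e^{zT}\big(Y(-z)(Tb\otimes a)+Y(-z)(b\otimes Ta)\big)=Y^{op}(z)(a\otimes Tb)+Y^{op}(z)(Ta\otimes b)
\]
---but it should be said rather than assumed. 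Alternatively, and perhaps more symmetrically, you can verify translation covariance~1 for $Y^{op}$ directly from translation covariance~2 for $Y$ by the same Leibniz-rule computation you used for covariance~2.
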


A key ingredient for the proof of Theorem \ref{BKThm.7.3} below
is the following result, which we shall use in Section \ref{sec:5}, 
for the proof of Theorem \ref{July2017Thm} in Section \ref{sec:5}
(the ``quantum analogue'' of Theorem \ref{BKThm.7.3}).
\begin{lemma}[{\cite[Lem.3.8]{BK}}]
\label{BKLem.3.8}
Let $X$ and $Y$ be two state-field correspondences, and let $a,b,c \in V$ be such that there exists $N \geq 0$ such that
\begin{align*}
(z-w)^N Y(z) \big( 1 \otimes X(w) \big)(a \otimes c \otimes b) = (z-w)^N X(w) \big( 1 \otimes Y(z)\big)(c \otimes a \otimes b).
\end{align*}
Then
\begin{align*}
(z-w)^N Y(z) \big( 1 \otimes X(w) \big)(a \otimes c \otimes Tb) = (z-w)^N X(w) \big( 1 \otimes Y(z)\big)(c \otimes a \otimes Tb).
\end{align*}
\end{lemma}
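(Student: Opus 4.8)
The plan is to apply the translation operator $T$ to the whole hypothesis and propagate it through the two nested products using translation covariance, then exploit that the operator $\spd{z}+\spd{w}$ annihilates the factor $(z-w)^N$. First I would rewrite everything in terms of iterated products, setting $P(z,w):=Y(z)\big(1\otimes X(w)\big)(a\otimes c\otimes b)=Y(z)\big(a\otimes X(w)(c\otimes b)\big)$ and $Q(z,w):=X(w)\big(1\otimes Y(z)\big)(c\otimes a\otimes b)=X(w)\big(c\otimes Y(z)(a\otimes b)\big)$, so that the hypothesis reads $(z-w)^N P=(z-w)^N Q$ and the desired conclusion is the same identity with $b$ replaced by $Tb$ inside $P$ and $Q$.

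Next I would apply $T$ to the hypothesis — legitimate since $T$ is linear and commutes with multiplication by the scalar series $(z-w)^N$ — and expand $TP$ and $TQ$ using translation covariance (Definition \ref{pointedVectorSpaceWithStateFieldCorrespondenceDef}(ii)) applied twice. For $P$, applying it once to the outer product $Y(z)(a\otimes-)$ and once to the inner product $X(w)(c\otimes-)$, and using that $Y(z)(a\otimes-)$ and $X(w)(c\otimes-)$ commute with $\spd{w}$ and $\spd{z}$ respectively (these derivatives act only on the explicit variable dependence), yields $TP = Y(z)\big(a\otimes X(w)(c\otimes Tb)\big)+(\spd{z}+\spd{w})P$. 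The symmetric computation gives $TQ = X(w)\big(c\otimes Y(z)(a\otimes Tb)\big)+(\spd{z}+\spd{w})Q$.

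The key step is then the cancellation of the total-derivative terms. Since $(\spd{z}+\spd{w})(z-w)^N=0$, applying $\spd{z}+\spd{w}$ to the hypothesis $(z-w)^N P=(z-w)^N Q$ gives $(z-w)^N(\spd{z}+\spd{w})P=(z-w)^N(\spd{z}+\spd{w})Q$. Multiplying the two expressions for $TP$ and $TQ$ by $(z-w)^N$, invoking $(z-w)^N TP=(z-w)^N TQ$ (which is just $T$ applied to the hypothesis), and subtracting the equal derivative contributions, I am left precisely with $(z-w)^N Y(z)\big(a\otimes X(w)(c\otimes Tb)\big)=(z-w)^N X(w)\big(c\otimes Y(z)(a\otimes Tb)\big)$, i.e. $(z-w)^N Y(z)\big(1\otimes X(w)\big)(a\otimes c\otimes Tb)=(z-w)^N X(w)\big(1\otimes Y(z)\big)(c\otimes a\otimes Tb)$, which is the claim.

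I expect the only delicate point to be the bookkeeping in applying translation covariance twice and correctly collecting the $\spd{z}$- and $\spd{w}$-contributions into the single total derivative $\spd{z}+\spd{w}$; once this is organized, the cancellation against $(z-w)^N$ is automatic, and no properties of $X$, $Y$ beyond being state-field correspondences are needed.
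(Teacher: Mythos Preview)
Your proof is correct. The paper itself does not give a proof of this lemma; it merely cites \cite[Lem.3.8]{BK}. Your argument---apply $T$ to the hypothesis, expand using translation covariance (axiom (ii)) twice to write $TP = Y(z)\big(1\otimes X(w)\big)(a\otimes c\otimes Tb) + (\spd z+\spd w)P$ and similarly for $TQ$, then cancel the total-derivative pieces using $(\spd z+\spd w)(z-w)^N=0$---is the standard one and matches the original proof in \cite{BK}.
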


\begin{definition}
Let $(V,\vac)$ be a pointed vector space and let $Y$ be a state-field correspondence. One says that $Y$ satisfies the \emph{associativity relation} if, for any $a, b, c \in V$, there exists $N \in\mb Z_{\geq 0}$ such that the following equation holds:
\begin{equation}
\label{fAssociativity}
(z+w)^N Y(w)\big( Y(z) \otimes 1 \big)(a \otimes b \otimes c) = (z+w)^N \gse{z}{w} Y(z+w) \big( 1 \otimes Y(w) \big)(a \otimes b\otimes c).
\end{equation}
\end{definition}

\begin{proposition}[{\cite[Prop.4.1]{BK}}]
\label{BKProp.4.1}
Let $Y$ be a state-field correspondence on a pointed vector space $(V,\vac)$. Then $Y$ satisfies the associativity relation \eqref{fAssociativity} if and only if all pairs $\big( Y(a,z), Y^{op}(b,w) \big)$ are local on each $c \in V$
(cf. Definition \ref{def:local}).
\end{proposition}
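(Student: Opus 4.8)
The plan is to prove Proposition \ref{BKProp.4.1} by translating the associativity relation \eqref{fAssociativity} into a statement about locality of the pair $\bigl(Y(a,z),Y^{op}(b,w)\bigr)$, using the definition \eqref{eq:Yop} of $Y^{op}$ and the $e^{wT}$-covariance relations of Proposition \ref{BKProp.2.7} (parts (b) and (c)). First I would unravel what it means for $\bigl(Y(a,z),Y^{op}(b,w)\bigr)$ to be local on $c$: there should exist $N$ with
\begin{equation*}
(z-w)^N Y(z)\bigl(a\otimes Y^{op}(w)(b\otimes c)\bigr)
=
(z-w)^N Y^{op}(w)\bigl(b\otimes Y(z)(a\otimes c)\bigr)\,.
\end{equation*}
Expanding $Y^{op}$ via \eqref{eq:Yop}, the left-hand side becomes $(z-w)^N Y(z)\bigl(1\otimes e^{wT}Y(-w)\bigr)(a\otimes b\otimes c)$, and here I would apply Proposition \ref{BKProp.2.7}(b) in the form $Y(z)(1\otimes e^{wT}) = e^{wT}\gse{z}{w}Y(z-w)(1\otimes e^{-wT})e^{wT}$, or more directly pull the $e^{wT}$ through to rewrite this as a composition $\gse{z}{w}$-expanded $Y(z-w)$ applied after $Y(-w)$. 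The upshot of this bookkeeping is that the locality equation for $\bigl(Y(a,z),Y^{op}(b,w)\bigr)$ on $c$ is equivalent, after the substitution $z\mapsto z+w$ (i.e.\ shifting and using the geometric expansion identities), to the associativity relation \eqref{fAssociativity} — with the same $N$, or $N$ differing by a controlled amount.

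The key steps, in order, are: (1) write out the locality condition for the pair $\bigl(Y(a,z),Y^{op}(b,w)\bigr)$ on $c$; (2) substitute the definition \eqref{eq:Yop} of $Y^{op}$ and simplify the right-hand side, which produces $(z-w)^N e^{wT}Y(-w)\bigl(b\otimes Y(z)(a\otimes c)\bigr)$ — but one must be careful about which variable the $e^{wT}$ acts through, and use translation covariance \ref{pointedVectorSpaceWithStateFieldCorrespondenceDef}(ii)--(iii) together with Proposition \ref{BKProp.2.7}(c) to commute it past the outer $Y$; (3) on the left-hand side, use Proposition \ref{BKProp.2.7}(b) to replace $Y(z)(1\otimes e^{wT})$ and then change variables $z\to z+w$ so that both sides are expressed through $Y(z+w)$, $Y(w)$, and $Y(z)$ acting in the pattern of \eqref{fAssociativity}; (4) match the powers $(z-w)^N$ under the change of variable $z\mapsto z+w$ with $(z+w)^N$, and observe that the two reorganized equations coincide with the two sides of \eqref{fAssociativity}; (5) conclude both implications simultaneously, since every manipulation used is reversible. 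One can streamline this by invoking Proposition \ref{BKProp.2.8} to know $Y^{op}$ is itself a state-field correspondence, so both $Y$ and $Y^{op}$ satisfy \ref{pointedVectorSpaceWithStateFieldCorrespondenceDef}(i)--(iii), and then apply the general ``associativity $\Leftrightarrow$ locality of the opposite'' mechanism symmetrically.

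The main obstacle I anticipate is the careful tracking of the $e^{wT}$ factors and the two different geometric expansions $\gse{z}{w}$ versus $\gse{w}{z}$ through the change of variables $z\mapsto z+w$. Specifically, one has to be sure that the identity $Y(z)\bigl(e^{wT}\otimes 1\bigr)=\gse{z}{w}Y(z+w)$ of Proposition \ref{BKProp.2.7}(c) is applied in the correct slot, and that when we move $e^{wT}$ from the $Y^{op}$ definition across the outer vertex operator we get exactly $\gse{z}{w}Y(z+w)$ and not an expansion in the wrong domain; a sign error in $Y(-w)$ versus $Y(w)$, or an expansion taken in $|w|>|z|$ instead of $|z|>|w|$, would break the equivalence. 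I would handle this by doing the computation purely formally in $V((z))((w))$ (so that all geometric expansions are unambiguous) and only at the end reading off the integer $N$; the multiplication by $(z-w)^N$ is exactly what kills the ambiguity between the two expansions, which is the point of Lemma \ref{LiLem.2.1}-style arguments, and this is precisely why the statement is an ``if and only if'' with an $N$ depending on $a,b,c$. A secondary, minor point is verifying that $Y^{op}(b,w)$ is genuinely a quantum field (so that $Y^{op}(w)(b\otimes c)\in V((w))$ and the composition $Y(z)\bigl(a\otimes Y^{op}(w)(b\otimes c)\bigr)$ makes sense in $V((z))((w))$), but this is immediate from Proposition \ref{BKProp.2.8}.
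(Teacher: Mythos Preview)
Your proposal is correct and follows essentially the same route as the paper. The paper does not give a proof of Proposition~\ref{BKProp.4.1} itself (it is cited from \cite{BK}), but it proves the $h$-adic analogue, Proposition~\ref{BKProp.4.1h}, by exactly the mechanism you describe: write out the locality condition for $(Y(a,z),Y^{op}(b,w))$ on $c$, unfold $Y^{op}$ via \eqref{eq:Yop}, use the $e^{wT}$-covariance of Proposition~\ref{BKProp.2.7}(b) to convert $Y(z)(1\otimes e^{wT})$ into $e^{wT}\gse{z}{w}Y(z-w)$, and then make a change of variable to land on \eqref{fAssociativity}. The only cosmetic difference is that the paper changes $w\mapsto -w$ and swaps $b\leftrightarrow c$ (which turns $(z-w)^N$ into $(z+w)^N$ and $Y(z-w)$ into $Y(z+w)$ directly), whereas you propose the substitution $z\mapsto z+w$; either bookkeeping works and the argument is reversible in both directions.
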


\begin{definition}
\label{fieldAlgebraDef}
A pointed vector space $(V,\vac)$ with a state-field correspondence $Y$ is called a \emph{field algebra} if the map $Y$ satisfies the associativity relation \eqref{fAssociativity}.
\end{definition}

The following proposition follows from \cite[Prop.4.7a]{BK}, see \cite[Rem.4.8]{BK}
\begin{proposition}\label{prop:2.10}
If $(V,\vac,Y)$ is a field algebra,
then $(V,\vac,Y^{op})$ is a field algebra as well.
\end{proposition}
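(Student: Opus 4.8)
The statement to prove is Proposition \ref{prop:2.10}: if $(V,\vac,Y)$ is a field algebra, then so is $(V,\vac,Y^{op})$.

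\medskip

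The plan is to reduce the claim to the locality characterization of the associativity relation provided by Proposition \ref{BKProp.4.1}, together with Proposition \ref{BKProp.2.8} which already tells us that $Y^{op}$ is a state-field correspondence. By Proposition \ref{BKProp.4.1}, saying that $(V,\vac,Y^{op})$ is a field algebra is equivalent to saying that all pairs $\bigl(Y^{op}(a,z),(Y^{op})^{op}(b,w)\bigr)$ are local on every $c\in V$. So the first step is to identify $(Y^{op})^{op}$. Unwinding the definition \eqref{eq:Yop}, one computes
\[
(Y^{op})^{op}(z)(a\otimes b)=e^{zT}Y^{op}(-z)(b\otimes a)=e^{zT}e^{-zT}Y(z)(a\otimes b)=Y(z)(a\otimes b),
\]
so $(Y^{op})^{op}=Y$. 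Hence the condition to verify is that all pairs $\bigl(Y^{op}(a,z),Y(b,w)\bigr)$ are local on every $c\in V$.

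\medskip

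Next I would translate the hypothesis in the same language: since $(V,\vac,Y)$ is a field algebra, Proposition \ref{BKProp.4.1} gives that all pairs $\bigl(Y(a,z),Y^{op}(b,w)\bigr)$ are local on every $c\in V$, i.e. for all $a,b,c$ there is $N\geq 0$ with
\[
(z-w)^N\bigl[Y(a,z),Y^{op}(b,w)\bigr]c=0.
\]
It remains to pass from locality of $\bigl(Y(a,z),Y^{op}(b,w)\bigr)$ to locality of $\bigl(Y^{op}(a,z),Y(b,w)\bigr)$. The bracket $[Y^{op}(a,z),Y(b,w)]c = Y^{op}(a,z)Y(b,w)c - Y(b,w)Y^{op}(a,z)c$; after multiplying by a suitable power of $(z-w)$ this is, up to the sign coming from swapping the two arguments of the bracket, the same type of expression as $[Y(b,w),Y^{op}(a,z)]c$ with the roles of the two fields and of the variables $z,w$ interchanged. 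So locality of all pairs $\bigl(Y(\cdot,z),Y^{op}(\cdot,w)\bigr)$ on every vector is symmetric in exactly the way needed: given $a,b,c$, apply the hypothesis to the triple $(b,a,c)$ to get $N$ with $(w-z)^N[Y(b,w),Y^{op}(a,z)]c=0$, and since $(w-z)^N=(-1)^N(z-w)^N$ and $[Y(b,w),Y^{op}(a,z)]=-[Y^{op}(a,z),Y(b,w)]$, this is precisely $(z-w)^N[Y^{op}(a,z),Y(b,w)]c=0$. Then Proposition \ref{BKProp.4.1}, applied now to $Y^{op}$ (whose opposite is $Y$ by the first step), yields that $Y^{op}$ satisfies the associativity relation \eqref{fAssociativity}, i.e. $(V,\vac,Y^{op})$ is a field algebra.

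\medskip

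I expect no serious obstacle here: the only mildly delicate points are the computation $(Y^{op})^{op}=Y$ (a one-line use of $e^{zT}e^{-zT}=1$, which is legitimate since $T$ is locally nilpotent on $V$ in the relevant sense, or rather since these are formal identities of the kind already used in Proposition \ref{BKProp.2.7}) and the bookkeeping of signs and variable swaps when transporting locality across the bracket. The fact that $Y^{op}$ is a state-field correspondence — needed before one may even invoke Proposition \ref{BKProp.4.1} for it — is already supplied by Proposition \ref{BKProp.2.8}. This is essentially the content of \cite[Rem.4.8]{BK}, so the argument is expected to be short.
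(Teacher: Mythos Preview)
Your proof is correct and is precisely the argument the paper points to: it defers to \cite[Prop.~4.7a]{BK} and \cite[Rem.~4.8]{BK}, whose content is exactly the reduction you carry out via Proposition~\ref{BKProp.4.1}, Proposition~\ref{BKProp.2.8}, and the identity $(Y^{op})^{op}=Y$. One minor remark: the parenthetical about $T$ being ``locally nilpotent'' is unnecessary --- the identity $e^{zT}e^{-zT}=1$ holds purely formally in $(\End V)[[z]]$, with no nilpotence hypothesis required.
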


\subsection{Vertex algebras}\label{sec:2.3}

Vertex algebras were introduced by Borcherds in \cite{B} as a pointed vector space with $n$-products $a_{(n)} b$ for each $n \in \mb Z$, such that $\vac_{(n)} a = \delta_{n,-1} a$ and a (cubic) relation called Borcherds Identity holds. The following definition was given in \cite{K} where it was proved that it is equivalent to the Borcherds' one.

\begin{definition}(\cite{K})
\label{VADefinition}
A \emph{vertex algebra} 
is a pointed vector space $(V,\vac)$
with a state-field correspondence $Y:\,V\otimes V\to V((z))$
(cf. Definition \ref{pointedVectorSpaceWithStateFieldCorrespondenceDef})
such that each pair of quantum fields $(Y(a,z),Y(b,z))$
($a,b\in V$) is local (cf. Definition \ref{def:local}), i.e.
\begin{equation}
\label{locality}
(z-w)^NY(z) \big( 1 \otimes Y(w) \big)(a \otimes b \otimes c)
=(z-w)^N Y(w) \big( 1 \otimes Y(z) \big)(b \otimes a \otimes c)\,,
\end{equation}
for some $N=N(a,b) \in \mb Z_{\geq 0}$.
\end{definition}
\begin{remark}
In fact, in the definition of a vertex algebra
the translation covariance 2 axiom is redundant.
Indeed,
as we will point out in Remark \ref{YTspdY},
it follows from the associativity relation \eqref{fAssociativity},
which in turn follows from the translation covariance 1
axiom and the locality axiom \eqref{locality} (cf. Proposition \ref{VAsfAssociativity}).
\end{remark}

\begin{example}
Let $\mf{g}$ be a finite dimensional Lie algebra 
with a non-degenerate symmetric invariant bilinear form $(\cdot |\cdot )$. 
Let $\widehat{\mf{g}} = \mf{g}\otimes \mb{K}((t)) \oplus \mb{K} K$ be the centrally extended loop algebra with the commutation relations given by
\begin{equation}
\big[ a \otimes f(t), b \otimes g(t) \big]
= [a,b] \otimes f(t)g(t) - (a|b)K Res_{t} (f(t) g'(t))
\,,
\end{equation}
for $a,b\in\mf g$ and $f(t),g(t)\in\mb K((t)$,
and $K$ is central.
For $k \in \mb{K}$,
consider the vacuum $\widehat{\mf{g}}$-module of level $k$:
\begin{displaymath}
V^k (\mf{g}) = U(\widehat{\mf{g}}) \otimes_{U( \mf{g}[[t]] \oplus \mb{K} K )} \mb{K}_{k}
\,,
\end{displaymath}
where $\mb{K}_k$ is the one-dimensional representation of $\mf{g}[[t]] \oplus \mb{K} K$
on which $K=k$ and $\mf{g}[[t]]=0$.
On $V=V^k (\mf{g})$ we define a vertex algebra structure  as follows.
First, 
for $x \in \mf{g}$, we let
\begin{equation}
x(z) = \sum_{i \in \mathbb{Z}} \big( x \otimes t^i \big) z^{-i-1}
\,\in\End V[[z,z^{-1}]]\,.
\end{equation}
We also let $x_+(z)=\sum_{i<0} \big( x \otimes t^i \big) z^{-i-1}$
and $x_-(z) = \sum_{i\geq0} \big( x \otimes t^i \big) z^{-i-1}$.
Clearly, the vector space $V^k(\mf g)$ is spanned by the coefficients
the series 
$$
x^1_+(u_1) \cdots x^m_+(u_m)\vac
\,\in V((u_1))\dots((u_m))
\,,
$$
where $\vac$ is the image of $1$ in $V^k(\mf g)$.
For $x^1,x^2\in\mf g$, we define the normally ordered product:
$$
:x^1(z)x^2(z):\,\, = x^1_+(z) x^2(z) + x^2(z) x^1_- (z)
\,,
$$
and inductively, for $x^1,\dots,x^m\in\mf g$,
$$
: x^1(z) \cdots x^m(z) :\,\, = x^1_+(z) : x^2(z) \cdots x^m(z) : + : x^2(z) \cdots x^m(z) : x^1_- (z)
\,.
$$
We then define the translation operator $T\in\End V$ by
$$
e^{zT} x^1_+(u_1) \cdots x^m_+(u_m)\vac = x^1_+(z+ u_1) \cdots x^m_+(z+ u_m)\vac
\,,
$$
and the state-field correspondence $Y$ by
$$
Y\big( x^1_+(u_1) \cdots x^m_+(u_m)\vac, z \big) = \gse{z}{u_1} \cdots \gse{z}{u_m} : x^1(z+ u_1) \cdots x^m(z+ u_m):
\,.
$$
The resulting vertex algebra $V^k (\mf{g})$ is known as the \emph{level $k$ universal affine vertex algebra} 
of $\mf g$.
The fact that this is a vertex algebra follows from the general extension theorem of \cite{DSK06} and Taylor's formula: $\gse{z}{u} x(z+u) = \sum_{j \geq 0} \frac{u^j}{j!}\spd{z}^j\, x(z)$.
\end{example}

\subsection{Associativity and other properties}

In this Section we recall some results from \cite{G},  \cite{K}, \cite{BK}.
\begin{lemma}[Goddard's uniqueness Theorem, \cite{G}, {\cite[Thm.4.4]{K}}]
\label{Goddard}
Let $V$ be a vertex algebra and let $a \in V$ and $a(z) \in \End V[[z, z^{-1}]]$ 
be an $\End V$-valued quantum field such that
$a(z) \vac = e^{zT}a$ 
and $a(z)$ is local with $Y(z)(c \otimes -)$ for all $c \in V$. Then
\begin{equation}
a(z)b = Y(z)(a \otimes b)\ \textrm{for all}\ b \in V.
\end{equation}
\end{lemma}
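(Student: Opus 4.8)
The plan is to prove Goddard's uniqueness theorem by exploiting the associativity relation together with the vacuum axioms, much as in \cite{K}. The essential observation is that a local quantum field which agrees with $Y(a,z)$ on the vacuum vector must agree with it everywhere, because applying it to $e^{wT}b$ and then setting $w=0$ recovers its action on $b$, and on $e^{wT}b$ we can compare the two fields using locality with $Y(b,w)$.

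First I would use the hypothesis that $a(z)$ is local with $Y(c,w)$ on each vector, and in particular local with $Y(b,w)$; combined with the fact that $Y$ is a state-field correspondence, this gives
\[
(z-w)^N\, a(z) Y(w)(b\otimes\vac) = (z-w)^N\, Y(w)\big(b\otimes a(z)\vac\big)
\]
for $N\gg0$, where I have used that $Y(w)(b\otimes -)$ is a quantum field so the two orderings of $a(z)$ and $Y(w)(b\otimes-)$ can be interchanged after multiplying by $(z-w)^N$. By Proposition \ref{BKProp.2.7}\eqref{BKProp.2.7p1}, $Y(w)(b\otimes\vac)=e^{wT}b$, and by hypothesis $a(z)\vac = e^{zT}a$. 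On the other hand, the associativity relation \eqref{fAssociativity} applied to $a,b,\vac$ gives
\[
(z-w)^N\, Y(z)\big(1\otimes Y(w)\big)(a\otimes b\otimes\vac)
= (z-w)^N\, \gse{z}{w}\, Y(z+w)\big(1\otimes Y(w)\big)(\cdots)
\]
— rather, I want the cleaner consequence: using $Y(w)(b\otimes\vac)=e^{wT}b$ and Proposition \ref{BKProp.2.7}\eqref{BKProp.2.7p3} (which reads $Y(z)(e^{wT}\otimes 1)=\gse{z}{w}Y(z+w)$), one gets $Y(z)(a\otimes e^{wT}b) = \gse{z}{w}Y(z+w)(a\otimes b)$ up to the locality factor, hence
\[
(z-w)^N\, Y(z)\big(a\otimes e^{wT}b\big) = (z-w)^N\, \gse{z}{w}\, Y(z+w)(a\otimes b)
\,.
\]

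Next I would assemble these: the locality of $a(z)$ with $Y(b,w)$, evaluated on $\vac$, yields
\[
(z-w)^N\, a(z)\, e^{wT}b = (z-w)^N\, Y(w)\big(b\otimes e^{zT}a\big)
= (z-w)^N\, \gse{w}{z}\, Y(z+w)(b\otimes a)
\,,
\]
using Proposition \ref{BKProp.2.7}\eqref{BKProp.2.7p3} again in the last step. But in a vertex algebra, skewsymmetry (Theorem \ref{CharacterizationVAsIntro}, or directly Proposition \ref{BKProp.2.8} and the fact that for a vertex algebra $Y=Y^{op}$) gives $Y(z+w)(b\otimes a) = e^{(z+w)T}Y(-z-w)(a\otimes b)$; comparing with $Y(z)(a\otimes e^{wT}b) = \gse{z}{w}Y(z+w)(a\otimes b)$ after the same expansion, the two sides coincide as elements of $V((w))((z))$ after multiplication by $(z-w)^N$. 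Since $Y(z)(a\otimes e^{wT}b)$ and $a(z)e^{wT}b$ both lie in the appropriate Laurent-series space and agree after multiplying by $(z-w)^N$, they are equal (multiplication by $(z-w)^N$ is injective on such spaces). Finally, setting $w=0$ in $a(z)e^{wT}b = Y(z)(a\otimes e^{wT}b)$ — legitimate since both sides lie in $V((z))[[w]]$ — gives $a(z)b = Y(z)(a\otimes b)$, as desired.

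The main obstacle I anticipate is bookkeeping the domains of the formal series and justifying the cancellation of the factor $(z-w)^N$ and the specialization $w=0$: one must check that $a(z)e^{wT}b$ genuinely lies in $V((z))[[w]]$ (so that $w=0$ makes sense), which follows because $a(z)$ is a quantum field and $e^{wT}b\in V[[w]]$, and that the identity $(z-w)^N f = (z-w)^N g$ with $f,g\in V((z))((w))$ forces $f=g$. A secondary subtlety is making sure the orderings in ``$a(z)$ local with $Y(w)(b\otimes-)$'' are handled correctly — namely that after multiplying by $(z-w)^N$ one may move $a(z)$ past $Y(w)(b\otimes-)$ — which is exactly the content of locality applied to the vector $\vac$, combined with $Y(w)(b\otimes\vac)=e^{wT}b$. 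None of this is deep; it is the standard delta-function/locality calculus already set up in Section \ref{CalculusOfFormalDistributionsSection} and Proposition \ref{BKProp.2.7}.
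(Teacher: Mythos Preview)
Your overall strategy --- compare $a(z)$ and $Y(a,z)$ on $e^{wT}b$ using locality on $\vac$, then set $w=0$ --- is the right one, but the execution has a concrete error and takes an unnecessary detour.

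The error: you twice invoke Proposition~\ref{BKProp.2.7}\eqref{BKProp.2.7p3}, which says $Y(z)(e^{wT}\otimes 1)=\gse{z}{w}Y(z+w)$, to rewrite expressions where the exponential sits in the \emph{second} tensor slot. Part~\eqref{BKProp.2.7p3} does not apply there; you need part~\eqref{BKProp.2.7p2} instead, and it gives $Y(z)(a\otimes e^{wT}b)=e^{wT}\gse{z}{w}Y(z-w)(a\otimes b)$, not $\gse{z}{w}Y(z+w)(a\otimes b)$. The same slip occurs for $Y(w)(b\otimes e^{zT}a)$. With the corrected formulas the two expressions do match after multiplying by $(z-w)^N$ (via skewsymmetry the only discrepancy is $\gse{z}{w}$ versus $\gse{w}{z}$ applied to $e^{wT}Y(z-w)(a\otimes b)$), so the argument can be repaired, but as written it is wrong.

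The detour: you route the comparison through associativity and skewsymmetry. The proof the paper has in mind (it cites \cite{K}, and the braided analogue is proved verbatim in Proposition~\ref{BraidedGoddard}) avoids both. One simply uses the vertex-algebra locality axiom alongside the hypothesis on $a(z)$:
\[
(z-w)^N a(z)Y(b,w)\vac=(z-w)^N Y(b,w)a(z)\vac,\qquad
(z-w)^N Y(a,z)Y(b,w)\vac=(z-w)^N Y(b,w)Y(a,z)\vac.
\]
Since $a(z)\vac=e^{zT}a=Y(a,z)\vac$, the right-hand sides coincide, hence $(z-w)^N a(z)e^{wT}b=(z-w)^N Y(a,z)e^{wT}b$. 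Both sides lie in $V((z))[[w]]$; set $w=0$ and cancel $z^N$. This is shorter, needs no skewsymmetry or associativity, and in particular does not use the translation covariance~2 axiom --- exactly the point of Remark~\ref{YTspdY}, which would be false under your approach since associativity (via Proposition~\ref{BKProp.4.1}) already presupposes it.
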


\begin{remark}
\label{YTspdY}
In the proof of Goddard's uniqueness Theorem one does not need 
the translation covariance 2 axiom
of Definition \ref{pointedVectorSpaceWithStateFieldCorrespondenceDef}.
On the other hand, the translation covariance 2 axiom
can be derived using Goddard's uniqueness Theorem
and the other vertex algebra axioms as follows.
By Proposition \ref{BKProp.2.7} (a), we have
\begin{displaymath}
\spd{z} Y(z)(a \otimes \vac) = Y(z)(Ta \otimes \vac)
\,.
\end{displaymath}
Moreover, by the locality axiom, we have
\begin{displaymath}
(z-w)^{N+1} \spd{z}Y(z) \big( 1 \otimes Y(w) \big) (a \otimes c \otimes -) = (z-w)^{N+1} Y(w) \big( 1 \otimes \spd{z}Y(z) \big) (c \otimes a \otimes -).
\end{displaymath}
Therefore $\spd{z}Y(z)(a \otimes -)$ is local with $Y(w)(c \otimes -)$ for any $c \in V$. 
Hence, by Goddard's uniqueness Theorem we conclude that 
$\spd{z}Y(z)(a \otimes -)=Y(z)(Ta \otimes -)$.
\end{remark}

The proof of the following Lemma is the same as the proof 
in \cite{FLM} or \cite[Prop.4.2]{K}.
\begin{lemma}[Skew-symmetry]
\label{VAsSkewSymmLem}
Let $Y$ be a state-field correspondence on a pointed vector space $(V,\vac)$.
Let $a,b\in V$.
Assuming that
$Y(z)(a \otimes -)$ and $Y(z)(b \otimes -)$ are local on $\vac$, 
we have
\begin{equation}
\label{VAsSkewSymm}
Y(z)(a \otimes b) = e^{zT}Y(-z)(b \otimes a)
\,\,\Big(=Y^{op}(z)(a\otimes b)\Big)
\,.
\end{equation}
\end{lemma}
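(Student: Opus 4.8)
The plan is to reduce the identity \eqref{VAsSkewSymm} to a statement about two quantum fields that agree on the vacuum and are mutually local, and then invoke a uniqueness-type argument — exactly as in the cited sources \cite{FLM}, \cite[Prop.4.2]{K}. First I would set $X(z)(a\otimes -):=Y^{op}(z)(a\otimes -)=e^{zT}Y(-z)(-\otimes a)$ and observe, using Proposition \ref{BKProp.2.7}\eqref{BKProp.2.7p1}, that both $Y(z)(a\otimes\vac)$ and $Y^{op}(z)(a\otimes\vac)$ equal $e^{zT}a$: indeed $Y^{op}(z)(a\otimes\vac)=e^{zT}Y(-z)(\vac\otimes a)=e^{zT}a$ by the vacuum axiom. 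Thus the two sides of \eqref{VAsSkewSymm} agree after pairing with $\vac$ on the right.

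Next I would establish that the pair $\big(Y(a,z),\,Y^{op}(b,w)\big)$ is local on $\vac$. By hypothesis $Y(z)(a\otimes-)$ and $Y(z)(b\otimes-)$ are local on $\vac$, i.e. there is $N\geq0$ with $(z-w)^N Y(z)(1\otimes Y(w))(a\otimes b\otimes\vac)=(z-w)^N Y(w)(1\otimes Y(z))(b\otimes a\otimes\vac)$. I would rewrite the right-hand side using Proposition \ref{BKProp.2.7}: $Y(w)(1\otimes Y(z))(b\otimes a\otimes\vac)=Y(w)(b\otimes e^{zT}a)$, and similarly the left-hand side involves $Y(z)(a\otimes e^{wT}b)$. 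Applying Proposition \ref{BKProp.2.7}\eqref{BKProp.2.7p2} to move the exponentials — i.e. $Y(w)(b\otimes e^{zT}a)=e^{zT}Y(w-z)(b\otimes a)=e^{zT}\iota_{w,z}Y(w-z)(b\otimes a)$ after the appropriate expansion — I can recognize the right-hand side, up to the prefactor, as $Y^{op}(z)(a\otimes Y(w)(b\otimes\vac))$ evaluated suitably, establishing locality of $\big(Y(a,z),Y^{op}(b,w)\big)$ on $\vac$, hence a fortiori on $b\otimes\vac$ for the relevant elements.

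The key technical step is then a ``locality plus agreement on the vacuum implies equality'' argument at the level of formal distributions: if two $V((z))$-valued distributions $Y(z)(a\otimes-)$ and $Y^{op}(z)(a\otimes-)$ are mutually local with the common field $Y(w)(b\otimes-)$ on $\vac$ and agree when applied to $\vac$, one concludes they agree on $b$. Concretely I would take the locality relation $(z-w)^N Y(z)(1\otimes Y(w))(a\otimes b\otimes\vac)=(z-w)^N Y^{op}(z)(1\otimes Y(w))(a\otimes b\otimes\vac)$ — both sides rewritten via Proposition \ref{BKProp.2.7} as $(z-w)^N$ times $e^{wT}$-shifted expressions in $Y(z-w)$ and $Y^{op}(z-w)$ applied to $a\otimes b$ — then use that the common value at $w=0$ (namely $e^{zT}a$ tensored appropriately, times $z^N$) forces the polynomial coefficients in $w$ to match, and finally divide by $(z-w)^N$ in the domain $|z|>|w|$ and specialize. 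This is where one must be careful that no information is lost when multiplying by $(z-w)^N$: since the $w\to0$ specialization already pins down the top coefficients, the Decomposition Theorem \ref{KDecThm} (or directly the argument of \cite[Prop.4.2]{K}) recovers $Y(z)(a\otimes b)=Y^{op}(z)(a\otimes b)$.

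The main obstacle I anticipate is bookkeeping of the geometric-series expansions $\iota_{z,w}$ when transporting the operators $e^{\pm wT}$ past $Y$ via Proposition \ref{BKProp.2.7}\eqref{BKProp.2.7p2}–\eqref{BKProp.2.7p3}, together with making the ``agreement on the vacuum plus locality'' step fully rigorous rather than a slogan. Since the lemma is explicitly asserted to have the same proof as \cite{FLM} or \cite[Prop.4.2]{K}, I would in fact present only the skeleton above and refer to those sources for the formal-distribution manipulations, noting that the only new input here is the weak hypothesis of locality \emph{on $\vac$} (rather than full locality), which suffices because every step above is performed after pairing on the right with $\vac$ or with $Y(w)(b\otimes\vac)$.
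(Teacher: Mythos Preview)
Your route is substantially more circuitous than the argument the paper points to (the proof in \cite[Prop.~4.2]{K}, which the paper in fact reproduces verbatim in the braided setting as the proof of Lemma~\ref{EK5Lem.1.2}). That argument is a three-line direct computation: write the locality relation on $\vac$ as
\[
(z-w)^N\,Y(z)\big(a\otimes e^{wT}b\big)\;=\;(z-w)^N\,Y(w)\big(b\otimes e^{zT}a\big),
\]
apply Proposition~\ref{BKProp.2.7}\eqref{BKProp.2.7p2} to the \emph{left} side only to get $(z-w)^N e^{wT}\iota_{z,w}Y(z-w)(a\otimes b)$, observe that for $N$ large both sides have only non-negative powers of $z$, set $z=0$, and cancel $(-w)^N$. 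No Goddard-type uniqueness, no Decomposition Theorem.

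Your scheme instead tries to compare $Y(a,z)$ and $Y^{op}(a,z)$ via a Goddard argument: agreement on $\vac$ plus mutual locality with a third field. This can in principle be made to work, but the key ingredient you slide past is not the hypothesis. What you need is that $\big(Y^{op}(a,z),\,Y(b,w)\big)$ is local on $\vac$; the hypothesis only gives locality of $\big(Y(a,z),\,Y(b,w)\big)$ on $\vac$. (You actually write ``the pair $\big(Y(a,z),\,Y^{op}(b,w)\big)$'', which is yet a third pair and not the one relevant to the uniqueness argument.) Establishing the correct locality relation requires precisely the translation-covariance manipulation $Y^{op}(a,z)e^{wT}b=e^{zT}\iota_{z,w}Y(w-z)(b\otimes a)$ versus $Y(b,w)e^{zT}a=e^{zT}\iota_{w,z}Y(w-z)(b\otimes a)$, after which the two expansions agree once multiplied by $(z-w)^N$; but this is already the whole content of the direct proof. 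Your appeal to Theorem~\ref{KDecThm} is also unnecessary: no local-distribution decomposition enters, only the injectivity of multiplication by $z^N$ on $V((z))$.
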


\begin{proposition}[cf. {\cite[Prop.4.1]{BK}}]
\label{VAsfAssociativity}
If $V$ is a vertex algebra, then the state-field correspondence $Y$ 
satisfies the associativity relation \eqref{fAssociativity}.
\end{proposition}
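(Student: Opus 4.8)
The plan is to deduce the associativity relation \eqref{fAssociativity} from the locality axiom \eqref{locality} by way of Proposition \ref{BKProp.4.1}, which characterizes associativity in terms of locality of the ``opposite'' pairs of fields. Concretely, by Proposition \ref{BKProp.4.1} it suffices to show that, for all $a,b\in V$, the pair of quantum fields $\big(Y(a,z),Y^{op}(b,w)\big)$ is local on every $c\in V$, where $Y^{op}$ is the state-field correspondence furnished by Proposition \ref{BKProp.2.8}.

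The key point is that in a vertex algebra one in fact has $Y=Y^{op}$. Indeed, the locality axiom \eqref{locality} asserts that every pair $\big(Y(a,z),Y(b,z)\big)$ is local, hence in particular local on the vacuum $\vac$; therefore Lemma \ref{VAsSkewSymmLem} (skew-symmetry) applies to every pair $a,b\in V$ and yields $Y(z)(a\otimes b)=e^{zT}Y(-z)(b\otimes a)=Y^{op}(z)(a\otimes b)$. Substituting $Y^{op}=Y$, the pair $\big(Y(a,z),Y^{op}(b,w)\big)=\big(Y(a,z),Y(b,w)\big)$ is local on $V$ directly by \eqref{locality}, so Proposition \ref{BKProp.4.1} gives the associativity relation \eqref{fAssociativity}.

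I expect the only delicate point to be the invocation of skew-symmetry. Lemma \ref{VAsSkewSymmLem} needs only locality on $\vac$ (which is available here), but its proof uses both translation-covariance axioms and the exponential $e^{zT}$, so this is where the hypotheses that $\mb K$ is a field of characteristic zero and that $Y$ is a genuine state-field correspondence really enter. An alternative, more hands-on route would bypass Proposition \ref{BKProp.4.1} and Lemma \ref{VAsSkewSymmLem}: starting from $(z-w)^N Y(z)(1\otimes Y(w))(a\otimes b\otimes c)=(z-w)^N Y(w)(1\otimes Y(z))(b\otimes a\otimes c)$, one would apply the second half of Lemma \ref{LiLem.2.1} together with Proposition \ref{BKProp.2.7}(b), which rewrites $\gse{z}{w}Y(z+w)(1\otimes Y(w))$ as $e^{wT}Y(z)\big(1\otimes e^{-wT}Y(w)\big)$, and match the result against $Y(w)(Y(z)\otimes 1)$; but this essentially reproves Proposition \ref{BKProp.4.1} in the present setting, so the first route is the economical one.
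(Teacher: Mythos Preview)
Your argument is correct and is precisely the route the paper has in mind: the paper does not give an independent proof of Proposition \ref{VAsfAssociativity} but simply cites \cite[Prop.4.1]{BK}, and the passage immediately following (``It follows, by Proposition \ref{VAsfAssociativity} and Lemma \ref{VAsSkewSymmLem}, that a vertex algebra is a field algebra satisfying $Y=Y^{op}$'') together with Remark \ref{GenVAsfAssociativity} makes clear that the intended argument is exactly yours---use skew-symmetry (Lemma \ref{VAsSkewSymmLem}) to get $Y=Y^{op}$, so that locality of $(Y,Y^{op})$ on each vector reduces to the locality axiom, and then invoke Proposition \ref{BKProp.4.1}.
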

\begin{remark}
\label{GenVAsfAssociativity}
In the statement of Proposition \ref{VAsfAssociativity}
one can weaken the assumptions by requiring that
$Y$ is a state-field correspondence satisfying the locality on any vector of $V$.
\end{remark}

It follows, by Proposition \ref{VAsfAssociativity} and Lemma \ref{VAsSkewSymmLem}, 
that a vertex algebra is a field algebra satisfying $Y = Y^{op}$. 
The converse is also true:
\begin{theorem}[{\cite[Thm.7.3]{BK}}]
\label{BKThm.7.3}
A vertex algebra is the same as a field algebra for which $Y = Y^{op}$.
\end{theorem}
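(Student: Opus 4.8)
The statement to prove is Theorem \ref{BKThm.7.3}: a vertex algebra is the same as a field algebra for which $Y = Y^{op}$.

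The plan is to establish the two implications separately, with the bulk of the work already done by the preceding results. The forward direction---that a vertex algebra is a field algebra with $Y = Y^{op}$---is immediate from the results already recorded: Proposition \ref{VAsfAssociativity} gives that the state-field correspondence of a vertex algebra satisfies the associativity relation \eqref{fAssociativity}, hence $V$ is a field algebra by Definition \ref{fieldAlgebraDef}; and Lemma \ref{VAsSkewSymmLem} (skew-symmetry), applied with $b = \vac$ or more precisely noting that in a vertex algebra $Y(z)(a\otimes -)$ and $Y(z)(b\otimes -)$ are local on every vector, in particular on $\vac$, yields $Y = Y^{op}$. So this half is essentially a citation.

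The substance is the converse: given a field algebra $(V,\vac,Y)$ with $Y = Y^{op}$, show that $Y$ satisfies the locality axiom \eqref{locality}, i.e. that each pair of quantum fields $(Y(a,z), Y(b,z))$ is local. First I would invoke Proposition \ref{BKProp.4.1}: since $Y$ is a field algebra, all pairs $(Y(a,z), Y^{op}(b,w))$ are local on every $c\in V$. But $Y^{op} = Y$ by hypothesis, so all pairs $(Y(a,z), Y(b,w))$ are local on every $c \in V$---that is, $(z-w)^N[Y(a,z),Y(b,w)]c = 0$ for $N = N(a,b,c)$. To upgrade this ``locality on each vector'' to genuine locality (an $N$ independent of $c$), I would argue as follows: fix $a,b$ and choose $N$ large enough that $(z-w)^N Y(z)(1\otimes Y(w))(a\otimes b\otimes\vac) = (z-w)^N Y(w)(1\otimes Y(z))(b\otimes a\otimes\vac)$; then apply Lemma \ref{BKLem.3.8} repeatedly (with $X = Y$) to pass from locality on $\vac$ to locality on $T^k\vac$, but more usefully one uses associativity to propagate locality on $\vac$ to locality on all of $V$. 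Concretely: by the associativity relation, $Y(w)(1\otimes Y(z))(b\otimes a\otimes c)$ for general $c$ can be rewritten, after multiplying by a suitable power of $(z-w)$, in terms of the ``$(n)$-products'' $a_{(n)}b$ acting on $c$, and similarly for the other side; combined with locality on $\vac$, which computes these products, one gets locality on arbitrary $c$ with a uniform bound. This is the standard argument (as in \cite{BK}, \cite{K}); the key technical input is Lemma \ref{BKLem.3.8}.

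The main obstacle I anticipate is precisely the passage from ``local on each vector'' to ``local with a single uniform $N$''. The per-vector statement comes for free from Proposition \ref{BKProp.4.1}, but the vertex algebra locality axiom \eqref{locality} demands $N = N(a,b)$ only. The clean route is: (1) get locality on $\vac$ with some $N_0 = N_0(a,b)$; (2) use the n-product expansion coming from the associativity relation \eqref{fAssociativity} to write both sides of \eqref{locality}, multiplied by $(z-w)^{N_0}$, as finite sums $\sum_j Y(w)(a_{(N_0+j)}b \otimes c)\,\partial_w^j\delta(z,w)/j!$ plus a regular remainder governed by the decomposition Theorem \ref{KDecThm}; (3) since on $\vac$ both sides agree, the coefficients match, and one reads off that the same $N_0$ works for all $c$. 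Care is needed to ensure the associativity relation is being used in the direction that expresses the composition $Y(w)(1\otimes Y(z))$ in terms of $Y(w)(Y(z-w)\otimes 1)$-type expressions, and to handle the two expansion domains $\gse{z}{w}$ and $\gse{w}{z}$ correctly; Proposition \ref{BKProp.2.7}(c) and Lemma \ref{BKLem.3.8} are the tools that make this bookkeeping go through. Once locality holds, $V$ satisfies all the axioms of Definition \ref{VADefinition}, completing the proof.
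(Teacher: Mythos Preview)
The paper does not give its own proof of this theorem---it is simply cited from \cite{BK}, with the forward direction justified in the preceding sentence by Proposition~\ref{VAsfAssociativity} and Lemma~\ref{VAsSkewSymmLem}, exactly as you do. The paper does, however, flag Lemma~\ref{BKLem.3.8} as ``a key ingredient for the proof of Theorem~\ref{BKThm.7.3}'', and it gives a complete proof of the quantum analogue, Theorem~\ref{July2017Thm}, which displays the intended argument structure.

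Your forward direction matches the paper. For the converse, you correctly start from Proposition~\ref{BKProp.4.1} together with $Y=Y^{op}$ to get locality on every vector, and you correctly name Lemma~\ref{BKLem.3.8} as relevant. But the route you outline in steps (1)--(3) for upgrading to a uniform $N$ is not the \cite{BK} argument and has a genuine gap: your step~(2) invokes ``the $n$-product expansion coming from the associativity relation'' and the Decomposition Theorem~\ref{KDecThm} to write both sides as $\delta$-function expansions---but Theorem~\ref{KDecThm} applies only to distributions you already \emph{know} to be local, and the $n$-product identity~\eqref{nproductIdentities} is not a consequence of associativity alone. You are effectively assuming Borcherds-type structure equivalent to what you are trying to prove.

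The actual \cite{BK} argument (use the proof of Theorem~\ref{July2017Thm} as a template, setting $\mc S=1$) proceeds differently. From locality of $\big(Y(b,u),\,Y^{op}(c,w)\big)$ on $a$, apply Lemma~\ref{BKLem.3.8} to replace $a$ by $e^{zT}a$; then unwind $Y^{op}$ via its definition and Proposition~\ref{BKProp.2.7}(b),(c) to express $Y(u)(1\otimes Y(z))(b\otimes a\otimes c)$ as the evaluation at $w=0$ of a certain expression regular in $w$ (cf.\ equations \eqref{THMf2}--\eqref{RHSS-loc2}). Perform the symmetric manipulation starting from locality of $\big(Y(a,z),\,Y^{op}(c,w)\big)$ on $b$, extended to $e^{uT}b$, to obtain an analogous expression for $Y(z)(1\otimes Y(u))(a\otimes b\otimes c)$ (cf.\ \eqref{LHSS-loc}). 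The two resulting expressions agree after multiplication by $(z-u)^N$ with $N$ depending only on $a,b$, which is precisely the locality axiom. No decomposition theorem and no $n$-product identity enter.
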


\begin{corollary}[{\cite{BK}}]
\label{EquivalenceLocalityOnAnyVectorLocality}
Let $(V, \vac)$ be a pointed vector space with a state-field correspondence $Y$. Then $Y$ is local if and only if it is local on every vector of $V$.
\end{corollary}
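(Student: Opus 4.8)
The plan is to prove the two implications separately. The forward direction is immediate: if $Y$ is local, so that for each $a,b\in V$ there is $N=N(a,b)\in\mb Z_{\geq0}$ with $(z-w)^N[Y(a,z),Y(b,w)]=0$, then a fortiori $(z-w)^N[Y(a,z),Y(b,w)]c=0$ for every $c\in V$, which is exactly locality on every vector.

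For the converse the point to keep track of is the uniformity of the exponent: the hypothesis ``local on every vector'' supplies, for each triple $a,b,c$, only an integer $N=N(a,b,c)$, whereas the conclusion demands $N=N(a,b)$ independent of $c$. I would bridge this gap by passing through the field-algebra characterization of vertex algebras recalled above. First, by Remark \ref{GenVAsfAssociativity} (the weakened form of Proposition \ref{VAsfAssociativity}), a state-field correspondence that is local on every vector satisfies the associativity relation \eqref{fAssociativity}; hence $(V,\vac,Y)$ is a field algebra. Second, since in particular $Y(z)(a\otimes-)$ and $Y(z)(b\otimes-)$ are local on $\vac$ for all $a,b$, Lemma \ref{VAsSkewSymmLem} yields $Y(z)(a\otimes b)=e^{zT}Y(-z)(b\otimes a)=Y^{op}(z)(a\otimes b)$, i.e.\ $Y=Y^{op}$. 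Finally, Theorem \ref{BKThm.7.3} identifies a field algebra with $Y=Y^{op}$ with a vertex algebra; by Definition \ref{VADefinition} such $Y$ is local, with exponent depending only on $a,b$, which is the desired conclusion.

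I expect the only substantive ingredient to be the invocation of Remark \ref{GenVAsfAssociativity}: one should be sure that the argument proving Proposition \ref{VAsfAssociativity} indeed goes through when locality is assumed only on vectors rather than globally. Granting that, everything else is a direct appeal to the results of this section, and the uniform bound $N(a,b)$ is produced automatically once $V$ is recognized as a vertex algebra via Theorem \ref{BKThm.7.3}.
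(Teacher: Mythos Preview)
Your proposal is correct and is exactly the argument the paper intends: the corollary is stated without proof, placed immediately after Theorem \ref{BKThm.7.3}, and your chain (locality on vectors $\Rightarrow$ associativity via Remark \ref{GenVAsfAssociativity}, locality on $\vac$ $\Rightarrow$ $Y=Y^{op}$ via Lemma \ref{VAsSkewSymmLem}, then Theorem \ref{BKThm.7.3} gives a vertex algebra, hence uniform locality) is precisely how it is meant to be read off. Your caveat about Remark \ref{GenVAsfAssociativity} is well-placed but harmless: the standard proof of associativity (via Proposition \ref{BKProp.4.1}, or directly as in \cite{K}) only ever uses locality on individual vectors.
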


\subsection{Equivalent characterizations of vertex algebras}

Let $Y$ be a state-field correspondence on the pointed vector space $(V, \vac)$
(cf. Definition \ref{pointedVectorSpaceWithStateFieldCorrespondenceDef}).

The \emph{$n$-product} of the quantum fields $Y(a,z)$ and $Y(b,z)$ is
defined as ($a,b\in V$, $n\in\mb Z$):
\begin{equation}
\label{nproducts}
Y(a,z)_{(n)} Y(b,z) 
= 
Res_x \Big( Y(a,x) Y(b,z) \gse{x}{z}(x-z)^n
- Y(b,z) Y(a,x) \gse{z}{x}(x-z)^n \Big)
\,.
\end{equation}
\begin{remark}
As an immediate consequence of Theorem \ref{KDecThm},
in a vertex algebra we have
\begin{equation}
[Y(a,z),Y(b,w)] = \sum_{j \geq 0} (Y(a,z)_{(j)} Y(b,z)) \frac{\partial^{j}_w \delta(z,w)}{j!}.
\end{equation}
\end{remark}

The \emph{Jacobi identity} on $V$  (cf. \cite[Sec.8.8]{FLM}) is
($a,b,c\in V$):
\begin{equation}
\label{Jacobi}
\begin{split}
\gse{z}{w} \delta(x, z-w) &Y(a,z) Y(b,w) c
- \gse{w}{z} \delta(x, z-w) Y(b,w) Y(a,z) c \\
&= 
\gse{z}{x} \delta(w, z-x) Y(Y(a,x)b,w)c
\,.
\end{split}
\end{equation}

The \emph{Borcherds identities} on $V$ \cite{BK} is ($a, b, c \in V$, $n \in \mb Z$):
\begin{equation}
\label{BorcherdsIdentities}
\begin{split}
Y(a,z) & Y(b,w) c\ \gse{z}{w}(z-w)^n
-Y(b,w) Y(a,z) c\ \gse{w}{z}(z-w)^n \\
&= \sum_{j \geq 0} Y(a_{(n+j)} b,w) c\ \frac{\spd{w}^j \delta(z,w)}{j!}.
\end{split}
\end{equation}

The \emph{$n$-product identity} on $V$ \cite{BK} is ($a, b \in V$, $n \in \mb Z$):
\begin{equation}
\label{nproductIdentities}
Y(a,z)_{(n)} Y(b,z) = Y(a_{(n)} b,z)
\,.
\end{equation}

\begin{theorem}
\label{CharacterizationVAs}
Let $Y$ be a state-field correspondence on the pointed vector space $(V, \vac)$.
The following statements are equivalent:
\begin{enumerate}[(i)]
\item \label{VVA} $V$ is a vertex algebra, i.e., the locality axiom \eqref{locality} holds;
\item \label{VJac} the Jacobi identity \eqref{Jacobi} holds;
\item \label{VFAYYop} $V$ is a field algebra
(i.e., the associativity relation \eqref{fAssociativity} holds), 
and $Y = Y^{op}$;
\item \label{VBI} the Borcherds identities \eqref{BorcherdsIdentities} hold;
\item \label{Vnploc} the $n$-product identity \eqref{nproductIdentities}
and the locality axiom \eqref{locality} hold; 
\item \label{VnpYYop} the $n$-product identity \eqref{nproductIdentities} holds 
and $Y = Y^{op}$.
\end{enumerate}
\end{theorem}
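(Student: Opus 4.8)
The plan is to run the six conditions through a connected network of implications, reusing the structural results already recorded. First, for $(i)\Leftrightarrow(ii)$ I would apply Lemma \ref{LiLem.2.1} with $a(z,w)=Y(a,z)Y(b,w)c$, $b(z,w)=Y(b,w)Y(a,z)c$, $c(x,w)=Y(Y(a,x)b,w)c$, which lie respectively in $V((z))((w))$, $V((w))((z))$, $V((w))((x))$; then \eqref{Jacobi} is literally \eqref{LiJacobi}, and the Lemma equates it with the conjunction of locality on $c$ and the associativity relation \eqref{fAssociativity}. Quantifying over $a,b,c$ and using Corollary \ref{EquivalenceLocalityOnAnyVectorLocality} (locality on every vector $\Leftrightarrow$ locality) and Proposition \ref{VAsfAssociativity} with Remark \ref{GenVAsfAssociativity} (locality $\Rightarrow$ associativity), the associativity clause becomes redundant, so \eqref{Jacobi} $\Leftrightarrow$ locality $=(i)$. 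Next, $(ii)\Leftrightarrow(iv)$ is obtained by extracting the coefficient of $x^{-n-1}$ from \eqref{Jacobi}: the left side produces the left side of \eqref{BorcherdsIdentities}, while the right side, after the standard manipulation using $\partial_z\delta(z,w)=-\partial_w\delta(z,w)$, collapses to $\sum_{j\ge0}Y(a_{(n+j)}b,w)c\,\partial_w^j\delta(z,w)/j!$; this is reversed by summing $\sum_n x^{-n-1}(\cdots)$. Then $(i)\Leftrightarrow(iii)$ is Theorem \ref{BKThm.7.3}, the forward direction being Proposition \ref{VAsfAssociativity} together with Lemma \ref{VAsSkewSymmLem}; and $(iv)\Rightarrow(i)$ is immediate, choosing $n\ge0$ with $a_{(n+j)}b=0$ for all $j\ge0$ so that \eqref{BorcherdsIdentities} reduces to $(z-w)^n[Y(a,z),Y(b,w)]c=0$.

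The conditions $(v)$ and $(vi)$ both rest on one lemma: \emph{the $n$-product identity \eqref{nproductIdentities} implies the associativity relation \eqref{fAssociativity}}. Granting it, $(vi)\Rightarrow(iii)$ since \eqref{nproductIdentities} yields a field algebra and $Y=Y^{op}$ is assumed, whence $(vi)\Rightarrow(i)$; while for $(iv)\Rightarrow(v)$ and $(iv)\Rightarrow(vi)$ one applies $\Res_x$ to \eqref{BorcherdsIdentities} (relabelled $z\to x$, $w\to z$) and uses $\Res_x\partial_z^j\delta(x,z)=\delta_{j,0}$ to get \eqref{nproductIdentities}; adjoining locality (from $(iv)\Rightarrow(i)$) gives $(v)$, and adjoining $Y=Y^{op}$ (Lemma \ref{VAsSkewSymmLem}) gives $(vi)$. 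Finally $(v)\Rightarrow(i)$ is trivial, since $(v)$ contains the locality axiom; all six conditions are thereby mutually connected.

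To prove the lemma I would use \eqref{nproductIdentities} and the definition \eqref{nproducts} to rewrite $Y(Y(a,x)b,w)c=\sum_n x^{-n-1}\big(Y(a,w)_{(n)}Y(b,w)c\big)$ as $\Res_y\big[Y(a,y)Y(b,w)c\,\gse{y}{w}\delta(x,y-w)-Y(b,w)Y(a,y)c\,\gse{w}{y}\delta(x,y-w)\big]$. The first residue localizes $y$ at $y=x+w$ and equals $\gse{x}{w}Y(a,x+w)Y(b,w)c$, i.e. the right side of \eqref{fAssociativity}. For the second residue, a coefficient computation shows that $\gse{w}{y}\delta(x,y-w)$ has no negative powers of $y$ and that the coefficient of $y^{j}$ (for $j\ge0$) is supported on $x+w=0$, hence annihilated by $(x+w)^{j+1}$; since $Y(b,w)Y(a,y)c\in V((w))((y))$ has only finitely many negative powers of $y$, only finitely many $j$ contribute to $\Res_y$, so the whole second residue is killed by a fixed power $(x+w)^N$. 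Hence $(x+w)^N Y(Y(a,x)b,w)c=(x+w)^N\gse{x}{w}Y(a,x+w)Y(b,w)c$, which is \eqref{fAssociativity}. The main obstacle is exactly this last step: it demands careful bookkeeping of the expansions $\gse{y}{w}$, $\gse{w}{y}$, $\gse{x}{w}$ and a clean argument that the spurious term is genuinely supported on the diagonal $x=-w$. Everything else reduces to Lemmas \ref{LiLem.2.1} and \ref{VAsSkewSymmLem}, Proposition \ref{VAsfAssociativity}, Corollary \ref{EquivalenceLocalityOnAnyVectorLocality}, Theorem \ref{BKThm.7.3}, and routine $\delta$-function calculus.
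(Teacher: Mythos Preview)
Your argument is correct and mostly parallel to the paper's, but the step $(vi)\Rightarrow(i)$ is handled differently. The paper closes the loop by quoting Lemma~\ref{EquivalencenproductIdentitiesLemma} (\cite[Thm.~4.1(a)]{BK}), which says that the $n$-product identity is equivalent to the commutator formula
\[
[Y(a,z),Y^{op}(b,w)]=\sum_{j\geq0}Y^{op}(a_{(j)}b,w)\,\tfrac{\partial_w^j}{j!}\delta(z,w);
\]
this exhibits $(Y,Y^{op})$ as a local pair, so under $Y=Y^{op}$ one gets locality directly. You instead prove a separate lemma, \emph{the $n$-product identity implies associativity}, and then go $(vi)\Rightarrow(iii)\Rightarrow(i)$ via Theorem~\ref{BKThm.7.3}. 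Your computation is sound: the identity $\iota_{y,w}\delta(x,y-w)=\iota_{x,w}\delta(y,x+w)$ legitimizes the evaluation of the first residue as $\iota_{x,w}Y(a,x+w)Y(b,w)c$, and the second residue is a finite sum of terms $Y(b,w)(a_{(j)}c)\cdot\tfrac{(-1)^j}{j!}\partial_w^j\delta(x,-w)$, each annihilated by $(x+w)^{j+1}$. So your lemma is essentially the implication ``$n$-product identity $\Rightarrow$ $(Y,Y^{op})$ local on every vector'' of Proposition~\ref{BKProp.4.1}, obtained by unwinding the definitions rather than by quoting \cite{BK}. The payoff is a self-contained argument; the cost is the formal-distribution bookkeeping you flagged. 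The remaining implications in your write-up match the paper's (with slightly different routing for $(ii)\Rightarrow(i)$, where the paper goes through $(iii)$ via locality on $|0\rangle$ and you go through Corollary~\ref{EquivalenceLocalityOnAnyVectorLocality}; both are fine and non-circular).
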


\begin{proof}
The equivalence of \eqref{VVA} and \eqref{VFAYYop} is Theorem \ref{BKThm.7.3}.
The equivalence of \eqref{VVA} and \eqref{VJac} was proved in \cite[Thm.3.6.3]{LL}.
However, the proof is easily obtained as follows:
applying Lemma \ref{LiLem.2.1} to $a(z,w)=Y(a,z)Y(b,w)$,
$b(z,w)=Y(b,w) Y(a,z) c$ and $c(x,w)=Y(Y(a,x)b,w)c$,
we immediately get that the Jacobi identity \eqref{Jacobi}
is equivalent to the locality of $(Y(a,z),Y(b,z))$ on any vector $c\in V$
and the associativity relation \eqref{fAssociativity}.
In particular \eqref{VVA} implies \eqref{VJac}.
On the other hand, the locality on the vacuum vector implies $Y=Y^{op}$.
Hence, \eqref{VJac} implies \eqref{VFAYYop}.

Next, we prove that \eqref{VJac} implies \eqref{VBI}.
Multiplying both sides of the Jacobi identity \eqref{Jacobi} by $x^n$ and taking the residue $Res_x$ one has
\begin{align*}
Y(a,z) & Y(b,w) c\ \gse{z}{w}(z-w)^n
-Y(b,w) Y(a,z) c\ \gse{w}{z}(z-w)^n \\
& = 
Y\Big(
Res_x \big( x^n \gse{z}{x} \delta(w, z-x) Y(a,x) b \big)
,w\Big)c\,.
\end{align*}
On the other hand the following equalities hold:
\begin{align*}
&Res_x \big( x^n \gse{z}{x} \delta(w, z-x) Y(a,x) b \big)
=Res_x \big( x^n e^{-x \spd{z}} \delta(w, z) Y(a,x) b \big)\\
&= \sum_{j \geq 0} (-1)^j (a_{(n+j)} b) \frac{\spd{z}^j}{j!} \delta(z,w)
= \sum_{j \geq 0} (a_{(n+j)} b) \frac{\spd{w}^j}{j!} \delta(z,w).
\end{align*}
Combining the above two equations we get the Borcherds identity 
\eqref{BorcherdsIdentities}, proving \eqref{VBI}.

The Borcherds identity \eqref{BorcherdsIdentities}
immediately implies both the
$n$-th product identity \eqref{nproductIdentities},
by taking the residue in $z$,
and the locality axiom \eqref{locality},
since for $n\gg0$ we have $a_{(n)}b=0$.
Hence, \eqref{VBI} implies \eqref{Vnploc}.

Condition \eqref{Vnploc} obviously implies \eqref{VnpYYop}, 
by Lemma \ref{VAsSkewSymmLem}.
Finally \eqref{VnpYYop} implies \eqref{VVA} by the following
\begin{lemma}[{\cite[Thm.4.1, (a)]{BK}}]
\label{EquivalencenproductIdentitiesLemma}
Let $(V, \vac)$ be a pointed vector with a state-field correspondence $Y$. Then $Y$ satisfies the $n$-product identities if and only if
\begin{equation}
\label{EquivalencenproductIdentities}
\big[ Y(a,z), Y^{op}(b,w) \big] = \sum_{\substack{j \geq 0\\ finite}} Y^{op} \big( a_{(j)}b, w \big) \frac{\spd{w}^j}{j!} \delta(z,w)
\end{equation}
for any $a, b \in V$.
\end{lemma}
\end{proof}

\subsection{Commutative vertex algebras}
\label{CommutativeVAsSection}

\begin{definition}\label{def:holomorphic}
A state-field correspondence $Y$ on a pointed vector space $(V, \vac)$
is \emph{holomorphic} if $Y(z)(a\otimes b)\in V[[z]]$ for all $a,b\in V$.
\end{definition}
\begin{lemma}
\label{vacUnit-1prod}
Let $Y$ be a state-field correspondence
on a pointed vector space $(V, \vac)$
(cf. Definition \ref{pointedVectorSpaceWithStateFieldCorrespondenceDef})
and consider the $(-1)$-product defined by $a_{(-1)}b=\res_zz^{-1}Y(z)(a\otimes b)$.
Then the vacuum vector $\vac \in V$ is the unit element of the $(-1)$-product 
and $T$ is a derivation of the $(-1)$-product.
\end{lemma}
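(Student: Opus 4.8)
The plan is to verify the two assertions of Lemma \ref{vacUnit-1prod} directly from the axioms of a state-field correspondence in Definition \ref{pointedVectorSpaceWithStateFieldCorrespondenceDef}, using the consequences collected in Proposition \ref{BKProp.2.7}.

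First I would establish that $\vac$ is a unit for the $(-1)$-product. For the left unit, the first vacuum axiom gives $Y(z)(\vac\otimes b)=b$, so $\vac_{(-1)}b=\Res_z z^{-1}Y(z)(\vac\otimes b)=\Res_z z^{-1}b=b$. For the right unit, by Proposition \ref{BKProp.2.7}(a) we have $Y(z)(a\otimes\vac)=e^{zT}a=\sum_{k\ge0}\frac{z^k}{k!}T^k a$, which lies in $a+V[[z]]z$; taking $\Res_z z^{-1}$ picks out exactly the $z^0$-coefficient, giving $a_{(-1)}\vac=a$.

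Next I would check that $T$ is a derivation of the $(-1)$-product, i.e. $T(a_{(-1)}b)=(Ta)_{(-1)}b+a_{(-1)}(Tb)$. The key input is that translation covariance 1 and the definition of $T$ (together amounting to translation covariance in the form given after Proposition \ref{BKProp.2.7}) yield $TY(z)(a\otimes b)=Y(z)(Ta\otimes b)+Y(z)(a\otimes Tb)$ wait—more precisely, translation covariance 1 gives $TY(z)(a\otimes b)-Y(z)(a\otimes Tb)=\partial_z Y(z)(a\otimes b)$, and translation covariance 2 gives $Y(z)(Ta\otimes b)=\partial_z Y(z)(a\otimes b)$. Combining these, $TY(z)(a\otimes b)=Y(z)(Ta\otimes b)+Y(z)(a\otimes Tb)$. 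Now apply $\Res_z z^{-1}$ to both sides: since $T$ commutes with $\Res_z z^{-1}$ (it acts on coefficients, not on $z$), the left side becomes $T(a_{(-1)}b)$, and the right side becomes $(Ta)_{(-1)}b+a_{(-1)}(Tb)$, which is the desired identity.

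There is no serious obstacle here; the only point requiring a moment of care is that the residue $\Res_z z^{-1}$ commutes with $T$ and correctly extracts the constant term of $e^{zT}a$, which is immediate from the definitions. I would present the argument in roughly the order above: left unit, right unit (via Proposition \ref{BKProp.2.7}(a)), then the derivation property (via translation covariance).
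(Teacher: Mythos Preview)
Your proof is correct and is precisely the straightforward verification the paper alludes to; the paper's own proof consists of the single sentence ``It is a straightforward consequence of the vacuum and translation covariance axioms,'' and your argument spells out exactly those details.
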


\begin{proof}
It is a straightforward consequence of the vacuum and translation covariance axioms.
\end{proof}

\begin{lemma}
\label{fAssociativity-1prodAssociativity}
If $V$ is a field algebra with a holomorphic state-field correspondence $Y$,
then the $(-1)$-product is associative.
\end{lemma}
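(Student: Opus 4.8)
The plan is to use the associativity relation for the holomorphic state-field correspondence $Y$ and then pass to the limit $w\to 0$ (equivalently $z\to 0$), the only place where holomorphicity makes the argument legitimate. Concretely, for fixed $a,b,c\in V$, by the associativity relation \eqref{fAssociativity} there is $N\geq 0$ such that
\begin{equation*}
(z+w)^N Y(w)\big(Y(z)\otimes 1\big)(a\otimes b\otimes c)
=(z+w)^N \gse{z}{w} Y(z+w)\big(1\otimes Y(w)\big)(a\otimes b\otimes c)
\,.
\end{equation*}
Since $Y$ is holomorphic, every expression $Y(u)(x\otimes y)$ lies in $V[[u]]$, so there is no negative power of $z$ or $w$ anywhere; in particular the factor $(z+w)^N$ is a non-zero-divisor in $V[[z,w]]$ and may be cancelled, giving
\begin{equation*}
Y(w)\big(Y(z)\otimes 1\big)(a\otimes b\otimes c)
=\gse{z}{w} Y(z+w)\big(1\otimes Y(w)\big)(a\otimes b\otimes c)
\,\in V[[z,w]]\,.
\end{equation*}

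Next I would set $w=0$ on both sides. On the left, using the vacuum axiom in the form $Y(0)(x\otimes y)=x_{(-1)}y$ (the constant term of $Y(w)$), the left side becomes $\big(Y(z)(a\otimes b)\big)_{(-1)}c$. On the right, $Y(z+w)$ at $w=0$ is just $Y(z)$ and $Y(w)(b\otimes c)$ at $w=0$ is $b_{(-1)}c$, so the right side becomes $Y(z)\big(a\otimes(b_{(-1)}c)\big)$. Thus
\begin{equation*}
\big(Y(z)(a\otimes b)\big)_{(-1)}c = Y(z)\big(a\otimes (b_{(-1)}c)\big)
\,.
\end{equation*}
Finally, extracting the constant term in $z$ (i.e. applying $\res_z z^{-1}$, which for a series in $V[[z]]$ just reads off the $z^0$ coefficient) and using $Y(0)(a\otimes b)=a_{(-1)}b$ on the left gives exactly $(a_{(-1)}b)_{(-1)}c = a_{(-1)}(b_{(-1)}c)$, the associativity of the $(-1)$-product.

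The one point requiring care — and the only real obstacle — is the legitimacy of the substitution $w=0$ and the cancellation of $(z+w)^N$: both steps are valid precisely because holomorphicity confines all the relevant series to $V[[z]]$, $V[[w]]$, $V[[z,w]]$, where evaluation at $0$ and cancellation of the non-zero-divisor $(z+w)^N$ are well defined; without holomorphicity one would only have Laurent-type expansions $\gse{z}{w}$ for which neither operation makes sense. I would also note in passing that the geometric expansion symbol $\gse{z}{w}$ acts trivially on the element $Y(z+w)(1\otimes Y(w))(a\otimes b\otimes c)\in V[[z,w]]$ once holomorphicity is invoked, so it may be dropped, which is what makes the clean substitution $w=0$ possible. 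Everything else is the routine bookkeeping of reading off constant terms, so I would not belabor it.
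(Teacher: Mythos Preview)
Your proof is correct and follows essentially the same route as the paper: use holomorphicity to place both sides of the associativity relation in $V[[z,w]]$, cancel the non-zero-divisor $(z+w)^N$, and extract the constant terms in $z$ and $w$. The only cosmetic difference is that the paper takes $\res_z z^{-1}\res_w w^{-1}$ in one step, whereas you set $w=0$ first and then $z=0$; the content is identical.
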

\begin{proof}
Since the algebra $\mb K[[z,w]]$ of formal power series in $z,w$ has no zero divisors,
the associativity relation \eqref{fAssociativity} implies
$$
Y(w) \big( 1 \otimes Y(z) \big)( a \otimes b \otimes c) 
= Y(z+w) \big( 1 \otimes Y(w) \big) (a \otimes b \otimes c),
$$
in $V[[z,w]]$.
Multiplying both sides of the above equation
by $z^{-1}w^{-1}$ and taking the residues in $z$ and $w$,
we obtain the claim.
\end{proof}

\begin{lemma}
\label{LocalityOnVacuum-1prodCommutativity}
If $Y$ is a holomorphic state-field correspondence 
such that $Y=Y^{op}$,
then the $(-1)$-product is commutative.
\end{lemma}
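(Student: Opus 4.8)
The plan is to prove that if $Y$ is a holomorphic state-field correspondence with $Y=Y^{op}$, then the $(-1)$-product $a_{(-1)}b=\res_z z^{-1}Y(z)(a\otimes b)$ is commutative. The key observation is that for a holomorphic $Y$, the skewsymmetry relation $Y(z)(a\otimes b)=e^{zT}Y(-z)(b\otimes a)$ is an identity in $V[[z]]$, since both sides lie in $V[[z]]$ (here we use that $Y(z)(b\otimes a)\in V[[z]]$ by holomorphicity, so $Y(-z)(b\otimes a)$ and its image under $e^{zT}$ are well-defined in $V[[z]]$).

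First I would write out the skewsymmetry relation $Y(z)(a\otimes b)=e^{zT}Y(-z)(b\otimes a)$ and expand both sides as power series in $z$. The left-hand side has constant term (coefficient of $z^0$) equal to $a_{(-1)}b=\res_z z^{-1}Y(z)(a\otimes b)$. On the right-hand side, the factor $e^{zT}$ contributes $1$ at order $z^0$, and $Y(-z)(b\otimes a)$ has constant term equal to $Y(z)(b\otimes a)|_{z=0}=b_{(-1)}a$ (the substitution $z\mapsto -z$ does not affect the constant term). Hence comparing the coefficients of $z^0$ on both sides yields $a_{(-1)}b=b_{(-1)}a$, which is precisely the commutativity of the $(-1)$-product.

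There is essentially no serious obstacle here: the only thing to be careful about is that the manipulations of $e^{zT}$ and the substitution $z\mapsto -z$ make sense, which is exactly what holomorphicity guarantees (all series involved are genuine elements of $V[[z]]$, with no negative powers of $z$, so the geometric expansions and substitutions present in the general definition of $Y^{op}$ cause no ambiguity). I would simply remark that the identity $Y=Y^{op}$, restricted via holomorphicity to an identity in $V[[z]]$, specializes at $z=0$ to give the claim, and that combined with Lemmas~\ref{vacUnit-1prod} and \ref{fAssociativity-1prodAssociativity} this shows the $(-1)$-product is a commutative, associative, unital, differential product — setting up the holomorphic analogue of Theorem~\ref{CommutativityThmIntro}.
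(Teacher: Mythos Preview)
Your proof is correct and follows essentially the same approach as the paper: both extract the constant term of the identity $Y(z)(a\otimes b)=e^{zT}Y(-z)(b\otimes a)$ (the paper phrases this as multiplying by $z^{-1}$ and taking $\Res_z$, which for holomorphic $Y$ is the same as evaluating at $z=0$) to obtain $a_{(-1)}b=b_{(-1)}a$.
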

\begin{proof}
The claim is obtained 
multiplying both sides of the equation $Y(z)(a\otimes b)=Y^{op}(z)(a\otimes b)$
by $z^{-1}$ and taking the residue in $z$.
\end{proof}

Recall that 
a vertex algebra $V$ is said to be \emph{commutative} if
the locality axiom \eqref{locality} holds with $N=0$
for all $a,b\in V$.
\begin{theorem}[{\cite{B},\cite[Sec.1.4]{K}}]\label{CommVAHolom}
A vertex algebra $V$ is commutative if and only if the state-field correspondence $Y$
is holomorphic.
In this case,
the $(-1)$-product is commutative, associative, unital, with derivation $T$,
and the state-field correspondence $Y$ is given by:
$Y(z) (a \otimes b) = (e^{zT}a)_{(-1)} b$. Thus, commutative vertex algebras are in bijective correspondence with unital commutative associative differential algebras.
\end{theorem}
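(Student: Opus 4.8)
The plan is to prove the two claims of Theorem~\ref{CommVAHolom} separately: first the equivalence ``commutative $\iff$ holomorphic'', then the structural description in the commutative case.

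\textbf{Step 1: Commutative implies holomorphic.} Assume the locality axiom \eqref{locality} holds with $N=0$. Applying it with $c=\vac$ and using Proposition~\ref{BKProp.2.7}\eqref{BKProp.2.7p1}, i.e. $Y(w)(b\otimes\vac)=e^{wT}b$, together with $Y(z)(\vac\otimes-)=\id$ (the first vacuum axiom), I get
$$
Y(z)\big(1\otimes Y(w)\big)(a\otimes b\otimes\vac)=Y(z)(a\otimes e^{wT}b)
=Y(w)(b\otimes Y(z)(a\otimes\vac))=Y(w)(b\otimes e^{zT}a).
$$
The left-hand side lies in $V((z))[[w]]$ while the right-hand side lies in $V((w))[[z]]$; a formal distribution lying in both of these spaces in fact lies in $V[[z,w]]$. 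Setting $w=0$ (legitimate since there is no negative power of $w$) yields $Y(z)(a\otimes b)\in V[[z]]$, so $Y$ is holomorphic. Conversely, if $Y$ is holomorphic then $Y(z)(a\otimes-)$ and $Y(w)(b\otimes-)$ are mutually local with $N=0$ because $(z-w)^0$ times the commutator equals $Y(z)(1\otimes Y(w))(a\otimes b\otimes-)-Y(w)(1\otimes Y(z))(b\otimes a\otimes-)$, and — here I invoke that $V$ is a vertex algebra, so by Theorem~\ref{BKThm.7.3} and Lemma~\ref{VAsSkewSymmLem} we have $Y=Y^{op}$, hence $Y(z)(1\otimes Y(w))(a\otimes b\otimes-)=Y^{op}(w)(1\otimes Y^{op}(z))(b\otimes a\otimes-)$ — and then the associativity relation \eqref{fAssociativity} (available by Proposition~\ref{VAsfAssociativity}) run in $V[[z,w]]$ gives, after the holomorphicity lets me specialize, the needed equality with $N=0$. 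Actually the cleanest route for this direction is: holomorphic $+$ field algebra $+$ $Y=Y^{op}$ gives, by Lemmas~\ref{fAssociativity-1prodAssociativity} and \ref{LocalityOnVacuum-1prodCommutativity}, a commutative associative unital differential algebra structure on the $(-1)$-product, and then one checks directly that $Y(z)(a\otimes b)=(e^{zT}a)_{(-1)}b$ and that this formula makes \eqref{locality} hold with $N=0$.

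\textbf{Step 2: The structural description.} Suppose now $V$ is commutative, equivalently (by Step 1) holomorphic. By Lemma~\ref{vacUnit-1prod} the $(-1)$-product is unital with unit $\vac$ and $T$ is a derivation. Since $V$ is a vertex algebra it is a field algebra (Proposition~\ref{VAsfAssociativity}) with $Y=Y^{op}$ (Theorem~\ref{BKThm.7.3}/Lemma~\ref{VAsSkewSymmLem}), so Lemma~\ref{fAssociativity-1prodAssociativity} gives associativity of the $(-1)$-product and Lemma~\ref{LocalityOnVacuum-1prodCommutativity} gives commutativity. It remains to establish the formula $Y(z)(a\otimes b)=(e^{zT}a)_{(-1)}b$. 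For this I use Proposition~\ref{BKProp.2.7}\eqref{BKProp.2.7p3}: $Y(z)(e^{wT}a\otimes b)=\gse{z}{w}Y(z+w)(a\otimes b)$ — wait, that is the wrong slot; instead I use that for holomorphic $Y$ the series has no poles, so from $Y(z)(a\otimes\vac)=e^{zT}a$ and associativity of the $(-1)$-product together with the commutativity/holomorphicity one computes $\res_x x^{-1}Y(x)(a\otimes b)=a_{(-1)}b$ and, more generally, expanding $Y(z)(a\otimes b)=\sum_{n\geq 0}(a_{(-1-n)}b)z^n$ and identifying $a_{(-1-n)}b=\frac{1}{n!}(T^n a)_{(-1)}b$ via translation covariance 2 ($Y(Ta,z)=\spd z Y(a,z)$), which gives exactly $Y(z)(a\otimes b)=\sum_{n\geq0}\frac{z^n}{n!}(T^na)_{(-1)}b=(e^{zT}a)_{(-1)}b$. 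Finally, the bijection with unital commutative associative differential algebras: one direction is what we just proved; for the other, given such an algebra $(A,\cdot,1,\partial)$ one sets $\vac=1$, $T=\partial$, $Y(z)(a\otimes b)=(e^{z\partial}a)\cdot b$, and checks the vacuum axioms, both translation covariance axioms, and locality with $N=0$ — all immediate from commutativity, associativity, and the Leibniz rule.

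\textbf{Main obstacle.} The only slightly delicate point is the identification $a_{(-1-n)}b=\frac{1}{n!}(T^na)_{(-1)}b$, i.e. extracting the full Taylor expansion of $Y(z)(a\otimes b)$ from the $(-1)$-product; this needs translation covariance 2 in the form $Y(Ta,z)=\spd zY(a,z)$ applied inductively, and the verification that no subtleties arise because holomorphicity guarantees all the relevant series are genuine power series in $z$ with no residue ambiguities. Everything else is a routine unwinding of the definitions and the cited lemmas.
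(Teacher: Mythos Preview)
Your argument for ``commutative $\Rightarrow$ holomorphic'' and the structural description in Step~2 are correct and essentially match the paper. The paper obtains the formula $Y(z)(a\otimes b)=(e^{zT}a)_{(-1)}b$ slightly differently: the computation in the first half already gives $Y(z)(a\otimes b)=b_{(-1)}(e^{zT}a)$ (your equation after setting $w=0$, but taking $\Res_w w^{-1}$ of the other side instead), and then commutativity of the $(-1)$-product finishes it. Your route via translation covariance~2 is equally valid.

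The real divergence is in the converse ``holomorphic $\Rightarrow$ commutative''. Your first attempt (invoking $Y=Y^{op}$ to rewrite $Y(z)(1\otimes Y(w))(a\otimes b\otimes-)$ as $Y^{op}(w)(1\otimes Y^{op}(z))(b\otimes a\otimes-)$) is not an argument: if $Y=Y^{op}$ then that rewriting is just $Y(w)(1\otimes Y(z))(b\otimes a\otimes-)$, which is exactly what you are trying to equate with the left-hand side. Your second attempt (derive the explicit formula for $Y$ first, then verify $N=0$ locality from that formula) does work, but it is circuitous. The paper's argument is a single line: if $Y$ is holomorphic then both sides of the locality relation \eqref{locality} lie in $V[[z,w]]$, and since multiplication by $(z-w)^N$ is injective on $V[[z,w]]$ (no zero divisors), one can cancel $(z-w)^N$ and obtain $N=0$ locality immediately. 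You should replace your convoluted converse with this observation.
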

\begin{proof}
First, assume that the vertex algebra $V$ is commutative.
Letting $c=\vac$ in the locality axiom \eqref{locality} (with $N=0$)
and using Proposition \ref{BKProp.2.7}(a), we get
\begin{equation}
\label{CommVAHolomEq1}
Y(z) \big(1 \otimes e^{wT}\big) (a \otimes b) = Y(w) \big( 1 \otimes e^{zT})(b \otimes a).
\end{equation}
Multiplying both sides of equation \eqref{CommVAHolomEq1} by $w^{-1}$ 
and taking the residue $Res_w$, one has
\begin{equation}\label{damettere}
Y(z)(a \otimes b) = b_{(-1)} e^{zT} a\,\in V[[z]].
\end{equation}
Hence, $Y$ is holomorphic.
Conversely,
if $Y$ is holomorphic, the locality equation \eqref{locality}
can be divided by $(z-w)^N$ since $\mb K[[z,w]]$ has no zero divisors.
This proves the first assertion of the theorem.
The remaining assertions follow from Lemmas \ref{vacUnit-1prod},  \ref{fAssociativity-1prodAssociativity}, \ref{LocalityOnVacuum-1prodCommutativity},
and equation \eqref{damettere}.
\end{proof}

\section{Quantum vertex algebras}\label{sec:3}

\subsection{Topologically free $\Kh$-modules}
\label{TopologicallyFreeKhModulesSection}

Throughout the rest of the paper we shall work over the algebra $\mb K[[h]]$
of formal power series in the variable $h$,
and all the algebraic structures that we will consider
are modules over $\mb K[[h]]$.
\begin{definition}
\label{KasTopologicallyFreeModule}
A \emph{topologically free} $\mb K[[h]]$-module 
is isomorphic to $W[[h]]$ for some $\mb K$-vector space $W$.
\end{definition}
Note that $W[[h]]\not\cong W\otimes\mb K[[h]]$,
unless $W$ is finite-dimensional over $\mb K$,
and that the tensor product 
$U[[h]]\otimes_{\mb K[[h]]} W[[h]]$
of topologically free $\mb K[[h]]$-modules
is not topologically free,
unless one of $U$ and $W$ is finite dimensional.
One defines, for any vector spaces $U$ and $W$, the \emph{completed} tensor product by
\begin{equation}\label{eq:compl-tensor}
U[[h]]\widehat{\otimes}_{\mb K[[h]]}W[[h]]
:=
(U\otimes W)[[h]]
\,.
\end{equation}
This is a completion in $h$-adic topology of $U[[h]]\otimes_{\mb K[[h]]} W[[h]]$
\cite{Kas}.


Given a topologically free $\mb K[[h]]$-module $V$,
we let
\begin{equation}\label{eq:Vh}
V_h((z)) = 
\Big\{
a(z) \in V[[z, z^{-1}]] 
\,\Big|\,
a(z) \in V((z)) \mod h^M 
\text{ for every }
M \in \mb Z_{\geq0}
\Big\}
\,.
\end{equation}
In other words, expanding 
$a(z)=\sum_{n \in \mathbb{Z}} a_{(n)} z^{-n-1}$,
we require that $\lim_{n\to+\infty}a_{(n)}=0$ in $h$-adic topology.

\subsection{Quantum fields}
\label{sec:3.25}

Let $V$ be a topologically free $\Kh$-module.
An $\End_{\Kh}V$-\emph{valued quantum field} is
an $\End_{\Kh}V$-valued formal distribution $a(z)$
such that $a(z)b \in V_h((z))$ for any $b \in V$.

The following two simple results generalize Proposition \ref{BKProp.2.7}
for quantum fields over $\mb K[[h]]$. Proof is the same, we reprocude it for the sake of completeness.
\begin{lemma}
\label{QK15Lem.1.2}
Let $\vac \in V$ and $T: V \rightarrow V$ be a $\mb K[[h]]$-linear map such that $T\vac = 0$. 
Then for any $\End_{\Kh}V$-valued quantum field $a(z)$
such that $[T,a(z)]=\spd z a(z)$ (translation covariance), 
we have
\begin{equation}
a(z)\vac = e^{zT}a = \sum_{k \geq 0}\frac{T^k a}{k!}z^k,
\end{equation}
where $a=\Res_z z^{-1}a(z)\vac$.
\end{lemma}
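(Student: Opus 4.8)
The plan is to mimic the classical argument for Proposition \ref{BKProp.2.7}(a) verbatim, since nothing in it uses the ground field — only $\mathbb K[[h]]$-linearity of $T$ and the relation $[T,a(z)]=\partial_z a(z)$. First I would expand $a(z)\vac=\sum_{n\in\mathbb Z}a_{(n)}\vac\, z^{-n-1}$ and observe that, since $a(z)$ is an $\End_{\Kh}V$-valued quantum field, $a(z)\vac\in V_h((z))$; in particular $a(z)\vac$ has only finitely many negative powers of $z$ modulo each $h^M$. The claim is that in fact $a(z)\vac\in V[[z]]$ with the asserted Taylor coefficients.

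The key step is to apply the translation covariance relation to the vector $\vac$. From $[T,a(z)]=\partial_z a(z)$ and $T\vac=0$ we get $\partial_z\big(a(z)\vac\big)=T\,a(z)\vac$, an equality in $V[[z,z^{-1}]]$ (understood modulo each $h^M$, but the identity holds on the nose since it is an identity of formal distributions with coefficients in $V$). Writing $a(z)\vac=\sum_n v_{(n)}z^{-n-1}$ with $v_{(n)}=a_{(n)}\vac$, this reads $-(n+1)v_{(n)}=T v_{(n+1)}$ for all $n\in\mathbb Z$, i.e. $v_{(n)}=-\tfrac1{n+1}T v_{(n+1)}$ for $n\neq -1$. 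Now I would use the $V_h((z))$ property: fix $M$; modulo $h^M$ there is $n_0$ with $v_{(n)}\equiv 0\pmod{h^M}$ for $n\geq n_0$. Feeding this into the recursion $v_{(n)}=-\tfrac1{n+1}Tv_{(n+1)}$ and iterating upward — for $n\geq 0$ we may solve downward, for $n\leq -2$ we solve upward — the point is that for $n\leq -2$ one gets $v_{(n)}$ determined by $v_{(n+1)},v_{(n+2)},\dots$, and combining with vanishing for large index forces $v_{(n)}\equiv 0\pmod{h^M}$ for all $n\leq -2$ as well; since $M$ is arbitrary and $V$ is topologically free (hence $h$-torsion-free and $h$-adically separated), $v_{(n)}=0$ for $n\leq -2$. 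For $n=-1$ the recursion gives no constraint, so we set $a:=v_{(-1)}=\Res_z z^{-1}a(z)\vac$; and for $n\geq 0$, writing $n=k-1$ with $k\geq 1$, the recursion $v_{(k-1)}=-\tfrac1k T v_{(k)}$ is exactly $a_{(-k-1)}\vac=\tfrac{T^k}{k!}a$ after reindexing $v_{(-k-1)}=\tfrac{T^k}{k!}a$ by induction on $k$, starting from $v_{(-1)}=a$. Hence $a(z)\vac=\sum_{k\geq 0}\tfrac{T^k a}{k!}z^k=e^{zT}a$.

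The only mild obstacle is the bookkeeping to show $v_{(n)}=0$ for $n\leq-2$: one must be careful that the recursion propagates the mod-$h^M$ vanishing in the correct direction and that one really does reach every negative index from the region where $v_{(n)}\equiv 0$. Concretely: modulo $h^M$, $a(z)\vac$ is a genuine Laurent series, so $v_{(n)}\equiv 0\pmod{h^M}$ for $n\ll 0$ (large negative), which is the harmless direction; and $v_{(n)}\equiv 0$ for $n\geq n_0$ (large positive) by the $\lim_{n\to+\infty}v_{(n)}=0$ condition. Then $-(n+1)v_{(n)}=Tv_{(n+1)}$ for $n=n_0-1,n_0-2,\dots$ shows, going downward, that $v_{(n)}\equiv 0\pmod{h^M}$ keeps holding as long as $n\neq -1$, i.e. for all $n\leq -2$ and all $n\geq 0$ — but for $n\geq 0$ this is what we want to identify with $\tfrac{T^k}{k!}a$, so the iteration down to $n=0$ must instead be read as the relation $v_{(n)}=-\tfrac1{n+1}Tv_{(n+1)}$ pinning down each nonnegative coefficient in terms of $v_{(0)}$, and only the truly negative $v_{(n)}$, $n\leq -2$, are forced to vanish. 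Everything else is the routine index chase, carried out modulo $h^M$ for each $M$ and then lifted using topological freeness; I would then state the conclusion as in the lemma, noting $T^k a/k!$ makes sense because $T$ is $\mathbb K[[h]]$-linear so $e^{zT}$ is well-defined as an operator $V\to V[[z]]$.
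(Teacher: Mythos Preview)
Your approach is exactly the paper's: apply translation covariance to $\vac$ to obtain the differential equation $\partial_z\big(a(z)\vac\big)=T\big(a(z)\vac\big)$, then argue that a Laurent series (modulo each $h^M$) solving this must be a power series with coefficients $\tfrac{T^k a}{k!}$. However, your index bookkeeping is systematically swapped. With the expansion $a(z)\vac=\sum_n v_{(n)}z^{-n-1}$, the coefficients of \emph{negative} powers of $z$ are the $v_{(n)}$ with $n\geq 0$, and the $V_h((z))$ condition says precisely that $v_{(n)}\equiv 0\pmod{h^M}$ for $n\gg 0$ (you state this correctly once, then contradict yourself in the third paragraph with ``$v_{(n)}\equiv 0$ for $n\ll 0$''). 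The recursion $v_{(n)}=-\tfrac1{n+1}Tv_{(n+1)}$ then propagates this vanishing \emph{downward} through $n=n_0-1,\dots,1,0$, stopping at $n=-1$; hence it is the $v_{(n)}$ with $n\geq 0$ that are forced to vanish, while the $v_{(-k-1)}=\tfrac{T^k}{k!}a$ for $k\geq 0$ give the Taylor coefficients. Your write-up has these two ranges interchanged in both the second and third paragraphs (you conclude $v_{(n)}=0$ for $n\leq -2$, which is exactly backwards, and your passage from $v_{(k-1)}$ to $v_{(-k-1)}$ is unjustified as written). Once the indices are corrected, the argument is complete and identical in substance to the paper's terse proof.
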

\begin{proof}
Applying both sides of the translation covariance assumption to the vacuum vector
we get 
\begin{equation}\label{eq:last}
Ta(z)\vac=\spd z a(z)\vac
\,.
\end{equation}
Since $a(z)\vac$ is a Laurent series in $z$, 
it follows from \eqref{eq:last} that it is actually a formal power series in $z$,
and the claim immediately follows.
\end{proof}
\begin{lemma}
\label{QK15Lem.1.2b}
Let $T: V \rightarrow V$ be a $\mb K[[h]]$-linear map
and let $a(z)$ be an $\End_{\Kh}V$-valued quantum field 
such that $[T,a(z)]=\spd z a(z)$.
We have
\begin{equation}
e^{wT}a(z)e^{-wT}=\iota_{z,w}a(z+w)
\,.
\end{equation}
\end{lemma}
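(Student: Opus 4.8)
The plan is to reduce the identity $e^{wT}a(z)e^{-wT}=\iota_{z,w}a(z+w)$ to the previously established Lemma \ref{QK15Lem.1.2} by the standard generating-series argument, treating $w$ as a formal parameter. First I would expand the left-hand side as a formal power series in $w$ using the adjoint action: write $e^{wT}a(z)e^{-wT}=\sum_{k\geq0}\frac{w^k}{k!}(\ad T)^k a(z)$, where $\ad T$ denotes the operator $X\mapsto [T,X]$ on $\End_{\Kh}V$-valued formal distributions. This is the purely formal identity $e^{wT}Xe^{-wT}=e^{w\,\ad T}X$, valid in any associative algebra once one works in the $w$-adic completion; here all the coefficients are well-defined $\End_{\Kh}V$-valued formal distributions, so there is no convergence issue, only a bookkeeping one.

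Next I would use the translation covariance hypothesis $[T,a(z)]=\spd z a(z)$, i.e. $(\ad T)a(z)=\spd z a(z)$. Iterating, $(\ad T)^k a(z)=\spd z^{\,k} a(z)$ for all $k\geq0$ (a trivial induction, since $\spd z$ commutes with nothing relevant but simply gets applied again each time). Therefore
\begin{equation*}
e^{wT}a(z)e^{-wT}=\sum_{k\geq0}\frac{w^k}{k!}\spd z^{\,k} a(z)
=e^{w\spd z}a(z)
\,,
\end{equation*}
and the right-hand side is exactly the Taylor expansion of $a(z+w)$ in the domain $|z|>|w|$, namely $\iota_{z,w}a(z+w)$. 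The only point requiring a word of care is that $a(z)$ is a bilateral series $\sum_{n\in\mb Z}a_{(n)}z^{-n-1}$, so $\spd z^{\,k}a(z)$ involves infinitely many negative powers; applying $e^{w\spd z}$ coefficient-wise produces precisely the $\iota_{z,w}$-expansion of $a(z+w)=\sum_n a_{(n)}(z+w)^{-n-1}$, and this matching of the two sides coefficient-by-coefficient in $w$ is what the notation $\iota_{z,w}$ records.

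I do not expect any genuine obstacle here; the statement is a formal manipulation. The one subtlety worth flagging is that everything must be read in the $h$-adically completed setting: $a(z)b\in V_h((z))$ means the coefficients $a_{(n)}b$ tend to $0$ $h$-adically as $n\to+\infty$, and one should check that the rearrangement $e^{wT}a(z)e^{-wT}=\sum_k\frac{w^k}{k!}\spd z^k a(z)$ and its identification with $\iota_{z,w}a(z+w)$ remain valid after this completion. But since $T$ is $\mb K[[h]]$-linear and continuous, and the sum over $k$ is a genuine power series in $w$ with coefficients that are legitimate $\End_{\Kh}V$-valued quantum fields, no difficulty arises. Alternatively, one can deduce the lemma directly from Lemma \ref{QK15Lem.1.2} applied to the quantum field $b(z):=e^{wT}a(z)e^{-wT}$ (which still satisfies translation covariance, since conjugation by $e^{wT}$ commutes with $\ad T$ and with $\spd z$), evaluated on a suitable vacuum-like vector, but the direct $\ad T$ computation above is cleaner and self-contained.
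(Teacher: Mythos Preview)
Your proof is correct and is essentially the same as the paper's: the paper simply says it is an immediate consequence of Taylor expansion and translation covariance, which is exactly your computation $e^{wT}a(z)e^{-wT}=e^{w\,\ad T}a(z)=e^{w\spd z}a(z)=\iota_{z,w}a(z+w)$. The remarks about $h$-adic issues and the alternative via Lemma~\ref{QK15Lem.1.2} are unnecessary (and the latter does not quite apply, since no vacuum vector is assumed here), but the core argument is fine.
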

\begin{proof}
This is an immediate consequence of the Taylor expansion 
and the translation covariance assumption.
\end{proof}

\subsection{Braided vertex algebras}
\label{sec:3.2}

In this subsection we propose a definition of a braided vertex algebra,
which is slightly less restrictive than that in \cite{EK5}.

\begin{definition}
\label{def:state-field}
Let $V$ be a topologically free $\mb{K}[[h]]$-module,
with a given non-zero vector $\vac \in V$ (vacuum vector),
and a $\mb K[[h]]$-linear map $T:\,V\to V$
such that $T(\vac)=0$ (translation operator).
\begin{enumerate}[(a)]
\item
A \emph{topological state-field correspondence} on $V$ is a $\mb K[[h]]$-linear map 
\begin{equation}\label{eq:Y}
Y: V\widehat{\otimes} V \rightarrow V_h((z))
\,,
\end{equation}
satisfying the axioms of a state-field correspondence as in Definition \ref{pointedVectorSpaceWithStateFieldCorrespondenceDef}:
\begin{enumerate}[(i)]
\item 
$Y(z)(\vac\otimes a) = a$,
and $Y(z)(a\otimes \vac) \in a+ V[[z]]z$,
for all $a\in V$ (vacuum axioms);
\item 
$\spd{z}Y(z) = TY(z)-Y(z)(1\otimes T) = Y(z)(T\otimes1)$ 
(translation covariance).
\end{enumerate}
\item
A \emph{braiding} on $V$ is 
a $\mb K[[h]]$-linear map 
\begin{equation}\label{eq:S}
\mc{S}: V \widehat{\otimes} V \rightarrow V\widehat{\otimes} V \widehat{\otimes} \big(\mb{K}((z))[[h]]\big)
\,,
\end{equation}
such that $\mc{S} = 1 + O(h)$.
\end{enumerate}
\end{definition}

\begin{definition}
\label{BVADef}
A \emph{braided vertex algebra} is a quintuple $(V,\vac,T,Y,\mc S)$ as in Definition \ref{def:state-field},
satisfying the following $\mc{S}$-\emph{locality}:
for every $a, b \in V$ and $M \in \mb Z_{\geq0}$, there exists $N = N(a,b, M) \geq 0$ such that
\begin{equation}
\begin{split}\label{S-locality}
&(z-w)^N Y(z) \big( 1\otimes Y(w) \big)\big(\mc{S}(z-w)(a \otimes b) \otimes c \big)\\
&=(z-w)^N Y(w) \big( 1\otimes Y(z) \big)(b \otimes a \otimes c) \quad \textrm{mod}\ h^M
\end{split}
\end{equation}
for all $c\in V$.
\end{definition}
Note that for a braided vertex algebra $V$, the vector space $V/hV$
carries a canonical structure of a vertex algebra.

As for the usual state-field correspondence,
for the topological state-field correspondence 
$T$ is determined by the map $Y$ and the vacuum vector $\vac$:
\begin{equation}
T(a) = a_{(-2)}\vac = Res_z \big( z^{-2} Y(z) (a \otimes \vac) \big).
\end{equation}
Thus, a braided vertex algebra is a quadruple $(V, \vac, Y, \mc S)$ satisfying the axioms 
of Definitions \ref{def:state-field} and \ref{BVADef}.
By abuse of terminology, we shall call the quadruple $(V, \vac, Y, \mc S)$ satisfying the axioms 
of Definitions \ref{def:state-field}
a \emph{braided state-field correspondence}.

\begin{lemma}[{\cite[Lem.1.2]{EK5}}]
\label{EK5Lem.1.2}
Recall the definition \eqref{eq:Yop} of $Y^{op}$.
In a braided vertex algebra $V$, we have
\begin{equation}
\label{EK5Lem.1.2Eq}
Y(z)\mc{S}(z)(a\otimes b) = Y^{op}(z)(a\otimes b)
\,\,\,\text{ for all } a,b\in V
\,.
\end{equation}
\end{lemma}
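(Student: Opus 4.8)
The plan is to mimic the classical proof that $Y^{op}$ is uniquely characterized as the quantum field $c \mapsto e^{zT}Y(-z)(c\otimes a)$ and then identify it with $Y(z)\mc S(z)(a\otimes b)$ by exploiting $\mc S$-locality together with Goddard's uniqueness theorem for the vertex algebra $V/hV$, lifting the result order by order in $h$. First I would fix $a,b\in V$ and set $X(z)(b) := Y(z)\mc S(z)(a\otimes b)$, viewed as an $\End_{\Kh}V$-valued formal distribution in $z$ (well-defined because $\mc S(z)(a\otimes b)\in V\widehat\otimes V\widehat\otimes(\mathbb K((z))[[h]])$ and $Y$ lands in $V_h((z))$). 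One checks the vacuum-type normalization: using $\mc S = 1 + O(h)$, the vacuum axioms, and $T\vac=0$, one gets $X(z)(\vac) = Y(z)\mc S(z)(a\otimes\vac)$; since $\mc S(z)(a\otimes\vac)\equiv a\otimes\vac$ in a suitable sense (this requires that $\mc S$ fixes $-\otimes\vac$, which should follow from the braiding axioms or else be extracted from $\mc S$-locality with $c=\vac$), this equals $e^{zT}a$ by Proposition \ref{BKProp.2.7}(a). Thus $X(z)$ is a quantum field with $X(z)\vac = e^{zT}a$.

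Next I would establish locality of $X(z)$ with all $Y(w)(c\otimes -)$. The $\mc S$-locality axiom \eqref{S-locality} says precisely that, modulo $h^M$, the pair $\big(Y(z)\mc S(z)(a\otimes -)\,,\,Y(w)(b\otimes -)\big)$ — more precisely the iterated compositions appearing there — are local on each $c$. The subtlety is that \eqref{S-locality} has $\mc S(z-w)$ rather than $\mc S(z)$, so I would first use the translation-covariance relation $e^{wT}Y(z)(1\otimes e^{-wT}) = \iota_{z,w}Y(z+w)$ (Proposition \ref{BKProp.2.7}(b) / Lemma \ref{QK15Lem.1.2b}) and the shift behavior of $\mc S$ to rewrite \eqref{S-locality} as a genuine locality statement for the field $X(z)$ against $Y(w)(c\otimes-)$ on the vector $b$, valid mod $h^M$ for every $M$, hence (since $V$ is topologically free) valid on the nose. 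Then, working modulo $h$, the induced field $\bar X(z)$ on $V/hV$ satisfies $\bar X(z)\bar\vac = e^{zT}\bar a$ and is local with $\bar Y(w)(\bar c\otimes -)$ for all $\bar c$, so Goddard's uniqueness theorem (Lemma \ref{Goddard}) gives $\bar X(z)\bar b = \bar Y(z)(\bar a\otimes\bar b) = \overline{Y^{op}}(z)(\bar a\otimes\bar b)$; but $\overline{Y^{op}}$ is not the classical $Y$ in general — rather, since $\mc S\equiv 1 \mod h$, $\bar X(z) = \bar Y(z)(\bar a\otimes-) $ and we must instead compare with $Y^{op}(z)(a\otimes b) = e^{zT}Y(-z)(b\otimes a)$ directly.

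The cleaner route, which I would actually follow, is to prove the identity by induction on $M$ with the statement "$Y(z)\mc S(z)(a\otimes b) = Y^{op}(z)(a\otimes b) \bmod h^M$." The base case $M=1$ is automatic since both sides reduce mod $h$ to $Y(z)(a\otimes b)$ — here one uses that in the vertex algebra $V/hV$ one has $Y = Y^{op}$ (Lemma \ref{VAsSkewSymmLem}, applicable because $V/hV$ is a vertex algebra hence local on $\vac$). For the inductive step, suppose the two sides agree mod $h^M$; their difference $D(z) := Y(z)\mc S(z)(a\otimes b) - Y^{op}(z)(a\otimes b)$ lies in $h^M V_h((z))$, write $D(z) = h^M D_M(z) + O(h^{M+1})$. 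Both $Y(z)\mc S(z)(a\otimes-)$ and $Y^{op}(z)(a\otimes-)$ are quantum fields sending $\vac$ to $e^{zT}a$ (for $Y^{op}$ this is \eqref{eq:Yop} with $b=\vac$: $Y^{op}(z)(a\otimes\vac) = e^{zT}Y(-z)(\vac\otimes a) = e^{zT}a$), so $D(z)\vac = 0$, hence $D_M(z)\bar b$ makes sense as a $V/hV$-valued field vanishing on $\bar\vac$. Now I would show $D_M(z)$, as a field on $V/hV$, is local with every $\bar Y(w)(\bar c\otimes-)$: for $Y(z)\mc S(z)(a\otimes-)$ this follows from $\mc S$-locality (taken mod $h^{M+1}$, using the rewriting of the first paragraph), and for $Y^{op}(z)(a\otimes-)$ it follows because $(V/hV, Y^{op})$ is a field algebra (Proposition \ref{prop:2.10}) with $\overline{Y^{op}} = \bar Y^{op} = \bar Y$, so Proposition \ref{BKProp.4.1} / locality applies. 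Then Goddard's uniqueness theorem forces $D_M(z)\bar b = 0$ for all $\bar b$, i.e. $D(z)\in h^{M+1}V_h((z))$, completing the induction; intersecting over all $M$ gives \eqref{EK5Lem.1.2Eq}.

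The main obstacle I anticipate is the bookkeeping around the $\mc S(z-w)$ versus $\mc S(z)$ discrepancy and making the "locality of $X(z)$ with $Y(w)(c\otimes-)$" statement precise: \eqref{S-locality} is stated as an identity of elements of $V$ (after applying $Y(z)(1\otimes Y(w))$ to a triple), not manifestly as a commutator of fields, so one must carefully insert the resolution of $\mc S$ and use translation covariance to move the shift, and then invoke the two-variable Decomposition Theorem / Lemma \ref{LiLem.2.1} on $V/hV$ to convert the locality-on-vectors into the hypothesis needed for Goddard. A secondary point requiring care is justifying that $\mc S(z)$ fixes $a\otimes\vac$ (needed for the normalization $X(z)\vac = e^{zT}a$); if this is not among the stated braiding axioms, it must be derived from $\mc S$-locality by setting $b=\vac$ and using Goddard/uniqueness mod $h^M$, which threads cleanly into the same induction.
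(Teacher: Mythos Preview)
Your approach is substantially more complicated than what is needed, and the two obstacles you flag at the end are not mere bookkeeping---they are genuine circularities. Both the shift behavior of $\mc S$ (needed to turn $\mc S(z-w)$ into $\mc S(z)$) and the relation $\mc S(z)(a\otimes\vac)\equiv a\otimes\vac$ modulo $\ker Y$ are proved in the paper (Proposition \ref{prop:non-deg}) \emph{using} Lemma \ref{EK5Lem.1.2}. So you cannot appeal to them here, and without them your Goddard-type argument has no way to produce either the normalization $X(z)\vac=e^{zT}a$ or the locality of $X(z)$ with $Y(w)(c\otimes-)$. The inductive fix you suggest for the vacuum issue doesn't thread cleanly: extracting the $h^M$ coefficient of a statement involving $\mc S(z-w)$ gives you information about a combination of all lower-order pieces of $\mc S$, not about the field $Y(z)\mc S(z)(a\otimes-)$ directly.

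The paper's proof is much simpler and avoids Goddard entirely: it is just the braided analogue of the classical skew-symmetry argument (Lemma \ref{VAsSkewSymmLem}). Set $c=\vac$ in the $\mc S$-locality \eqref{S-locality} and use $Y(w)(b\otimes\vac)=e^{wT}b$ and $Y(z)(a\otimes\vac)=e^{zT}a$ (Lemma \ref{QK15Lem.1.2}) on both sides. Then apply $e^{wT}Y(z)(1\otimes e^{-wT})=\iota_{z,w}Y(z+w)$ (Lemma \ref{QK15Lem.1.2b}) to pull $e^{wT}$ out on the left, obtaining
\[
(z-w)^N e^{wT}\iota_{z,w}Y(z-w)\mc S(z-w)(a\otimes b)=(z-w)^N Y(w)(b\otimes e^{zT}a)\ \bmod\ h^M.
\]
For $N$ large both sides are regular in $z$; set $z=0$, divide by $(-w)^N$, and rename $-w$ to $z$ to get \eqref{EK5Lem.1.2Eq} mod $h^M$ for all $M$. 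No properties of $\mc S$ beyond the $\mc S$-locality axiom itself are used.
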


\begin{proof}
By the $\mc{S}$-locality \eqref{S-locality} with $c = \vac$ and Lemma \ref{QK15Lem.1.2}, 
we have that, for any $a, b \in V$ and $M \in \mb Z_{\geq0}$, 
there exists $N = N(a, b, M) \geq 0$ such that
\begin{equation}
\label{EK5Lem.1.2Eq1}
(z-w)^N Y(z) \big( 1 \otimes e^{wT} \big) \mc{S}(z-w)(a \otimes b) = (z-w)^N Y(w) \big( b \otimes e^{zT}a\big)\ \textrm{mod}\ h^M.
\end{equation}
By Lemma \ref{QK15Lem.1.2b}, equation \eqref{EK5Lem.1.2Eq1} becomes
\begin{equation}
\label{EK5Lem.1.2Eq2}
(z-w)^N e^{wT}\gse{z}{w}Y(z-w) \mc{S}(z-w)(a \otimes b)
= 
(z-w)^N Y(w) \big( b \otimes e^{zT}a\big)\ \textrm{mod}\ h^M.
\end{equation}
For $N$ big enough, $(z-w)^N Y(z-w) \mc{S}(z-w)(a \otimes b) \in V[[z-w]]$ mod $h^M$. Therefore both sides of equation \eqref{EK5Lem.1.2Eq2} have only positive powers of $z$. Evaluating equation \eqref{EK5Lem.1.2Eq2} on $z=0$, we obtain
\begin{equation}
\label{EK5Lem.1.2Eq3}
(-w)^N e^{wT}Y(-w) \mc{S}(-w)(a \otimes b) = (-w)^N Y(w) ( b \otimes a)\ \textrm{mod}\ h^M.
\end{equation}
Multiplying both sides of equation \eqref{EK5Lem.1.2Eq3} by $(-w)^{-N} e^{-wT}$ and renaming $-w$ in $z$, one has
\begin{displaymath}
Y(z) \mc{S}(z)(a \otimes b) = e^{zT} Y(-z) ( b \otimes a)\ \textrm{mod}\ h^M.
\end{displaymath}
Since the above equation holds modulo $h^M$ for every $M$,
it must hold identically.
The claim follows.
\end{proof}

\begin{remark}\label{EK5Lem.1.2a}
The same proof as for Lemma \ref{EK5Lem.1.2} shows that,
if the $\mc S$-locality \eqref{S-locality} holds for $c=\vac$,
then $Y\mc S=Y^{op}$.
\end{remark}

For braided vertex algebras, an analogue of the associativity relation \eqref{fAssociativity} does not hold, 
but one has the following quasi-associativity relation \eqref{hfQuasiAssociativity}:
\begin{proposition}[{\cite[Prop.1.1]{EK5}}]
\label{EK5Prop.1.1}
Let $V$ be a braided vertex algebra.
For every $a, b, c \in V$ and $M \in \mb Z_{\geq0}$, there exists $N \geq 0$ such that
\begin{equation}
\label{hfQuasiAssociativity}
\begin{split}
\gse{z}{w} \big( &(z+w)^N Y(z+w) \big( 1\otimes Y(w) \big)\Sp{2}{3}(w)\Sp{1}{3}(z+w) (a \otimes b \otimes c) \big)\\
&= (z+w)^N Y(w)\mc{S}(w) \big( Y(z) \otimes 1 \big) (a \otimes b \otimes c)\ \textrm{mod}\ h^M.
\end{split}
\end{equation}
\end{proposition}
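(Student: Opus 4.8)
The plan is to derive the quasi-associativity relation \eqref{hfQuasiAssociativity} from the $\mc S$-locality axiom \eqref{S-locality}, mimicking the proof of the classical associativity relation (Proposition \ref{VAsfAssociativity} / \cite[Prop.4.1]{BK}), but keeping careful track of where the braiding $\mc S$ must be inserted so that all the ``op'' and shift manipulations go through. The essential classical input is that associativity is equivalent to locality of the pair $(Y(a,z),Y^{op}(b,w))$ on each $c$ (Proposition \ref{BKProp.4.1}); in the braided setting the correct replacement for $Y^{op}$ is $Y\mc S$, by Lemma \ref{EK5Lem.1.2}. So the first step is to rewrite the RHS of \eqref{hfQuasiAssociativity}: using $Y(w)\mc S(w)=Y^{op}(w)$ and the definition $Y^{op}(w)(x\otimes y)=e^{wT}Y(-w)(y\otimes x)$ (equation \eqref{eq:Yop}), together with Proposition \ref{BKProp.2.7}\eqref{BKProp.2.7p2}--\eqref{BKProp.2.7p3} for moving $e^{wT}$ past $Y$, one converts $(z+w)^N Y(w)\mc S(w)(Y(z)\otimes 1)(a\otimes b\otimes c)$ into an expression of the form $(z+w)^N e^{wT}(\cdots)$ involving $Y(-w)$ acting on the pair $(c, Y(z)(a\otimes b))$ — i.e.\ essentially $Y(-w)\big(1\otimes Y(z)\big)$ applied to $c\otimes a\otimes b$ after the appropriate reordering.

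Next I would bring in $\mc S$-locality. Apply \eqref{S-locality} with the roles of the variables and vectors chosen so that, modulo $h^M$ and after multiplying by a high enough power of $(z-w)$ (equivalently $(z+w)$ after the substitution $w\mapsto -w$), the two sides exchange $Y(z)\big(1\otimes Y(w)\big)(\mc S(z-w)(a\otimes b)\otimes c)$ and $Y(w)\big(1\otimes Y(z)\big)(b\otimes a\otimes c)$; then use Lemma \ref{QK15Lem.1.2b} (the shifted translation-covariance identity $e^{wT}a(z)e^{-wT}=\iota_{z,w}a(z+w)$) to collapse the composition $Y(z+w)\big(1\otimes Y(w)\big)$, evaluated after conjugating by $e^{wT}$, into the ``single-field'' form $\iota_{z,w}Y(z+w)$. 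The bookkeeping here is exactly where the two copies of $\mc S$ in \eqref{hfQuasiAssociativity} — namely $\mc S^{23}(w)$ and $\mc S^{13}(z+w)$ — get produced: one copy comes from the $Y\mc S=Y^{op}$ substitution on the outer product in leg $3$, and the other from the $\mc S$-locality swap applied inside, acting on legs $1$ and $3$. I would set up the tensor-leg indices carefully so that after all substitutions the braidings land on the claimed legs with the claimed arguments.

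Concretely, the steps in order are: (1) start from the RHS $(z+w)^N Y(w)\mc S(w)(Y(z)\otimes 1)(a\otimes b\otimes c)$ and rewrite it via $Y\mc S=Y^{op}$ and \eqref{eq:Yop} as $(z+w)^N e^{wT}Y(-w)\big(1\otimes Y(z)\big)(c\otimes a\otimes b)$ up to reordering; (2) choose $N=N(a,b,c,M)$ large enough that $\mc S$-locality \eqref{S-locality}, applied with $z\mapsto z$, $w\mapsto -w$ and the appropriate vectors, lets us replace $Y(-w)\big(1\otimes Y(z)\big)(c\otimes a\otimes b)$ by $Y(z)\big(1\otimes Y(-w)\big)\big(\mc S(z+w)^{13}(a\otimes b)\otimes c\big)$ modulo $h^M$ — this is the place where Lemma \ref{BKLem.3.8}-type arguments (stability of locality under the remaining structure) may be needed to ensure the $N$ can be chosen uniformly; (3) conjugate the inner $Y(-w)$ by $e^{wT}$ using Lemma \ref{QK15Lem.1.2b} to turn $e^{wT}Y(z)\big(1\otimes Y(-w)\big)$ into $\iota_{z,w}Y(z+w)\big(1\otimes Y(w)\big)$ (the $e^{wT}$ in leg $2$ also produces the second braiding $\mc S^{23}(w)$ via $Y\mc S=Y^{op}$ applied to the inner product); (4) collect terms to recognize the LHS of \eqref{hfQuasiAssociativity}, and finally (5) observe, as in Lemma \ref{EK5Lem.1.2}, that since the identity holds modulo $h^M$ for every $M$ with $N$ depending on $M$, the statement is exactly the asserted one. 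The main obstacle I anticipate is step (2)--(3): namely checking that the power $N$ needed to kill negative powers and invoke $\mc S$-locality can be chosen compatibly for all the manipulations at once, and that the two braidings emerge on precisely legs $\{1,3\}$ and $\{2,3\}$ with arguments $z+w$ and $w$ respectively rather than some other combination — this requires writing out the tensor-leg action of $\mc S$ and $e^{wT}$ explicitly and is the one genuinely delicate computation, everything else being formal manipulation with $\iota_{z,w}$ and residues.
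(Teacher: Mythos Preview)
Your approach is correct and is essentially the same computation as the paper's, just traversed in the opposite direction: the paper applies $e^{-wT}$ to both sides of the $\mc S$-locality \eqref{S-locality} at the outset, which produces both $Y\mc S=Y^{op}$ substitutions simultaneously (one on each side via Lemmas \ref{QK15Lem.1.2b} and \ref{EK5Lem.1.2}), and then renames $w\mapsto -w$, $b\leftrightarrow c$ to obtain \eqref{hfQuasiAssociativity}. Note that Lemma \ref{BKLem.3.8} is not needed here---a single application of $\mc S$-locality (for the pair $(a,c)$ on the test vector $b$, in your ordering) suffices and the exponent $N$ is simply the one supplied by that axiom; also, your formula in step (2) should read $Y(z)\big(1\otimes Y(-w)\big)\big(\mc S(z+w)(a\otimes c)\otimes b\big)$, which after step (3) becomes $\mc S^{23}(w)\mc S^{13}(z+w)$ in the $a\otimes b\otimes c$ ordering, exactly as required.
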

\begin{proof}
Applying $e^{-wT}$ to both sides of the $\mc S$-locality \eqref{S-locality}, one has
\begin{equation}
\label{eq:EK5Prop.1.1}
\begin{split}
&(z-w)^N e^{-wT} Y(z) \big( 1\otimes Y(w) \big)\big(\mc{S}(z-w)(a \otimes b) \otimes c \big)\\
&=(z-w)^N e^{-wT} Y(w) \big( 1\otimes Y(z) \big)(b \otimes a \otimes c) \quad \textrm{mod}\ h^M.
\end{split}
\end{equation}
By Lemmas \ref{QK15Lem.1.2b} and \ref{EK5Lem.1.2}, the left hand side of equation \eqref{eq:EK5Prop.1.1} is equal to the following:
\begin{equation}\label{eq:proof1}
\begin{split}
&(z-w)^N \iota_{z,w} Y(z-w) \big( 1\otimes e^{-wT} Y(w) \big)\big(\mc{S}(z-w)(a \otimes b) \otimes c \big)\\
&= (z-w)^N \iota_{z,w} Y(z-w) \big( 1\otimes Y(-w) \big) \mc{S}^{2 3}(-w) \mc{S}^{1 3}(z-w) (a \otimes c \otimes b)\\
&= \iota_{z,w} \left((z-w)^N Y(z-w) \big( 1\otimes Y(-w) \big) \mc{S}^{2 3}(-w) \mc{S}^{1 3}(z-w) (a \otimes c \otimes b) \right).
\end{split}
\end{equation}
Similarly, using Lemma \ref{QK15Lem.1.2b} on the right hand side of equation \eqref{eq:EK5Prop.1.1}, 
one has
\begin{equation}\label{eq:proof2}
(z-w)^N Y(-w) \mc S(-w) \big( Y(z) \otimes 1 \big)(a \otimes c \otimes b).
\end{equation}
Equation \eqref{hfQuasiAssociativity} is obtained by equating (\!\!\!$\mod h^M$) 
\eqref{eq:proof1} and \eqref{eq:proof2},
changing the sign of $w$ and switching the letters $b$ and $c$.
\end{proof}


We have used above the following standard notation:
given $n\geq2$ and $i,j\in\{1,\dots,n\}$, we let
\begin{equation}\label{eq:Sij}
\mc S^{ij}(z)\,:\,\,
V^{\widehat{\otimes} n}\,\longrightarrow\,
V^{\widehat{\otimes} n}\widehat{\otimes}(\mb K((z))[[h]])
\,,
\end{equation}
act on the $i$-th and $j$-th factors (in this order) of $V^{\widehat{\otimes}n}$,
leaving the other factors unchanged.
For example, for $n=2$ we have $\mc S^{21}(z)=(12)\circ\mc S(z)\circ(12)$,
where $(12)$ is the transposition of factors in $V^{\widehat{\otimes}2}$.

\begin{proposition}\label{prop:non-deg}
Let $(V,\vac,T,Y,\mc S)$ be a braided vertex algebra.
Extend $Y(z)$ to a map 
$V\widehat{\otimes} V\widehat{\otimes}\big(\mb K((z))[[h]]\big)\to V_h((z))$
in the obvious way.
Then, modulo $\ker Y(z)$, we have
\begin{enumerate}[(a)]
\item
$\mc S(z)(\vac\otimes a)\equiv\vac\otimes a$, and $\mc S(z)(a\otimes\vac)\equiv a\otimes\vac$;
\item
$[T\otimes 1, \mc{S}(z)] \equiv -\spd{z}\mc{S}(z)$ 
(left shift condition);
\item
$[1\otimes T, \mc{S}(z)] \equiv \spd{z}\mc{S}(z)$ 
(right shift condition);
\item
$[T\otimes1+1\otimes T, \mc{S}(z)] \equiv 0$;
\item
$\mc S(z)\mc S^{21}(-z)=1$ (unitarity).
\end{enumerate}
Moreover, we have the quantum Yang-Baxter equation:
\begin{enumerate}[(a)]
\setcounter{enumi}{5}
\item
$\Sp{1}{2}(z_1-z_2)\Sp{1}{3}(z_1-z_3)\Sp{2}{3}(z_2-z_3) 
\equiv \Sp{2}{3}(z_2-z_3)\Sp{1}{3}(z_2-z_3)\Sp{1}{2}(z_1-z_2)$,
modulo $\ker(Y(z_1)(1\otimes Y(z_2))(1^{\otimes2}\otimes Y(z_3)(-\otimes-\otimes-\otimes\vac))$.
\end{enumerate}
\end{proposition}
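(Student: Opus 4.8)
The plan is to dispatch parts (a)--(e) by one uniform argument and to treat the quantum Yang--Baxter equation (f) separately, since it contains essentially all the work.

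For (a)--(e) the point is that, by definition, two elements of $V\widehat\otimes V\widehat\otimes(\mb K((z))[[h]])$ are congruent modulo $\ker Y(z)$ exactly when the extension of $Y(z)$ (acting on the first two tensor factors) takes the same value on them in $V_h((z))$. So I would, in each case, apply this extended $Y(z)$, expand $\mc S(z)(a\otimes b)=\sum_i(u_i\otimes v_i)f_i(z)$ with $f_i(z)\in\mb K((z))[[h]]$ (an $h$-adically convergent sum), and reduce everything to the skewsymmetry $Y(z)\mc S(z)=Y^{op}(z)$ of Lemma \ref{EK5Lem.1.2}, the vacuum axioms, and the translation covariance axioms for $Y$ and for $Y^{op}$ (the latter a state-field correspondence by Proposition \ref{BKProp.2.8}). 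Thus for (a) one has $Y(z)\mc S(z)(\vac\otimes a)=Y^{op}(z)(\vac\otimes a)=e^{zT}Y(-z)(a\otimes\vac)=e^{zT}e^{-zT}a=a=Y(z)(\vac\otimes a)$ by Proposition \ref{BKProp.2.7}(a), and likewise $Y(z)\mc S(z)(a\otimes\vac)=e^{zT}a=Y(z)(a\otimes\vac)$; for (b) one uses $Y(z)(Tu_i\otimes v_i)=\spd zY(z)(u_i\otimes v_i)$ together with the Leibniz rule, so that $Y(z)$ applied to $[T\otimes1,\mc S(z)](a\otimes b)+\spd z\bigl(\mc S(z)(a\otimes b)\bigr)$ collapses to $\spd zY^{op}(z)(a\otimes b)-Y^{op}(z)(Ta\otimes b)=0$; part (c) is the mirror computation using $1\otimes T$ and the first translation covariance axiom, and (d) is the sum of (b) and (c); finally for (e), applying $Y(z)$ to $\mc S(z)\mc S^{21}(-z)(a\otimes b)$ and invoking Lemma \ref{EK5Lem.1.2} twice (once to turn the outer $\mc S(z)$ into $Y^{op}$, once to recognize the surviving $\mc S(-z)$) returns $Y(z)(a\otimes b)$.

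For (f) the plan is the classical ``two reduced words for the longest element of $S_3$'' argument, performed inside an iterated quantum-field product. Denote by $\Phi$ the operator in the statement, so $\Phi(v_1\otimes v_2\otimes v_3)=Y(v_1,z_1)Y(v_2,z_2)Y(v_3,z_3)\vac$, extended over the coefficient ring to inputs in $V^{\widehat\otimes3}\widehat\otimes(\cdots)$, and for $\sigma\in S_3$ set $\Phi^{\sigma}(v_1\otimes v_2\otimes v_3)=Y(v_{\sigma(1)},z_{\sigma(1)})Y(v_{\sigma(2)},z_{\sigma(2)})Y(v_{\sigma(3)},z_{\sigma(3)})\vac$, so $\Phi=\Phi^{\mathrm{id}}$. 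The elementary step is that, for any $\sigma$ and any adjacent position $i$, the $\mc S$-locality axiom \eqref{S-locality} applied to the two fields sitting in positions $i,i+1$ of $\Phi^{\sigma}$ --- carrying the remaining fields along on the outside, resp.\ placing them in the ``$c$''-slot, which is allowed by $\mb K[[h]]$-linearity and $h$-adic continuity of $Y$ --- yields, after multiplying by a suitable power of $z_{\sigma(i)}-z_{\sigma(i+1)}$ and working modulo $h^M$, a congruence $\Phi^{\sigma}\circ\mc S^{\sigma(i)\sigma(i+1)}\!\bigl(z_{\sigma(i)}-z_{\sigma(i+1)}\bigr)\equiv\Phi^{\sigma s_i}$, where $s_i$ transposes positions $i,i+1$. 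Composing these elementary congruences along the reduced word $s_1s_2s_1$ turns $\Phi^{\mathrm{id}}$ into $\Phi^{w_0}$ ($w_0$ the order-reversing permutation) with exactly $\mc S^{12}(z_{12})\mc S^{13}(z_{13})\mc S^{23}(z_{23})$ inserted, and along $s_2s_1s_2$ it turns $\Phi^{\mathrm{id}}$ into the same $\Phi^{w_0}$ with $\mc S^{23}(z_{23})\mc S^{13}(z_{13})\mc S^{12}(z_{12})$ inserted, where $z_{ij}=z_i-z_j$. Equating the two expressions and cancelling the common product of powers of the $z_{ij}$ --- legitimate since, after multiplying through by enough such factors, all the expressions in play lie in a single iterated-Laurent-series $\mb K[[h]]$-module with no zero divisors, in which multiplication by each $z_{ij}$ is injective --- gives $\Phi\circ\bigl(\mc S^{12}(z_{12})\mc S^{13}(z_{13})\mc S^{23}(z_{23})\bigr)\equiv\Phi\circ\bigl(\mc S^{23}(z_{23})\mc S^{13}(z_{13})\mc S^{12}(z_{12})\bigr)$ modulo $h^M$ for every $M$, hence identically, which is the asserted congruence modulo $\ker\Phi$.

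The computations in (a)--(e) are routine. The main obstacle is the bookkeeping in (f): one must verify that each elementary swap produces precisely the $\mc S^{ij}$ with spectral parameter $z_i-z_j$ appearing in the stated quantum Yang--Baxter equation (so that the two reduced words assemble exactly its two sides and nothing else), and one must organize the $h$-adic and geometric-series expansion bookkeeping so that, once all the common $z_{ij}$-factors have been cleared, every intermediate expression lives in one and the same module where the cancellation is valid --- this is exactly the $\mb K[[h]]$-analogue of Li's Lemma \ref{LiLem.2.1} that is to be proved as Lemma \ref{LiLem.2.1b}.
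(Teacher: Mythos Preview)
Your proposal is correct and follows essentially the same route as the paper: parts (a)--(e) are all derived from the single identity $Y\mc S=Y^{op}$ of Lemma~\ref{EK5Lem.1.2} together with the state-field correspondence axioms for $Y$ and $Y^{op}$ (the paper orders (b)--(d) slightly differently, proving (d) and (c) first and deducing (b), but the content is identical), and part (f) is exactly the two-reduced-words computation you describe, with cancellation justified by injectivity of multiplication by $z_i-z_j$ on $V_h((z_1))((z_2))((z_3))$. One small correction: that last cancellation step is not an instance of Lemma~\ref{LiLem.2.1b}, which concerns the equivalence between a Jacobi-type identity and locality plus associativity; here you only need that multiplication by each $z_{ij}$ is injective on the iterated Laurent-series module and commutes with $h$, which is elementary.
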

\begin{proof}
As for the usual state-field correspondence,
for a topological state-field correspondence $Y$
the map $Y^{op}$ defined by \eqref{eq:Yop}
is a topological state-field correspondence as well (cf. Proposition \ref{BKProp.2.8}).
Moreover, by Lemma \ref{EK5Lem.1.2}, $Y^{op}=Y\mc S$.

By the equation $Y\mc S = Y^{op}$, the definition \eqref{eq:Yop} of $Y^{op}$,
the vacuum axiom and translation covariance, to get
$$
Y(z) \mc S(z) (a \otimes \vac) = Y^{op}(z) (a \otimes \vac) 
= e^{zT} Y(-z) (\vac \otimes a) = e^{zT} a = Y(z) (a \otimes \vac)
\,,
$$
and
$$
Y(z) \mc S(z) (\vac \otimes a) = Y^{op}(z) (\vac \otimes a) 
= e^{zT} Y(-z) (a \otimes \vac) 
= a
= Y(z) (\vac \otimes a)
\,,
$$
proving claim (a).

Since $T$ is a derivation of $Y^{op}$, we have
\begin{equation}\label{eq:iiia}
T Y(z) \mc S(z) (a \otimes b) = Y(z) \mc S(z) (T \otimes 1)(a \otimes b) 
+ Y(z) \mc S(z) (1 \otimes T)(a \otimes b)
\,,
\end{equation}
and since $T$ is a derivation of $Y$, we also have
\begin{equation}\label{eq:iiib}
T Y(z) \mc S(z) (a \otimes b) 
= Y(z) (T \otimes 1) \mc S(z) (a \otimes b) + Y(z) (1 \otimes T) \mc S(z) (a \otimes b)
\,.
\end{equation}
Combining equations \eqref{eq:iiia} and \eqref{eq:iiib}, we get 
$$
Y(z) [T \otimes 1+1\otimes T, \mc S(z)](a \otimes b)
= 0 \,,
$$
proving claim (d).
By the translation covariance of $Y^{op}$, we have
\begin{equation}\label{eq:iva}
T Y(z) \mc S(z) (a \otimes b) 
= 
Y(z) \mc S(z) (1 \otimes T)(a \otimes b) 
+ \spd{z} \left(Y(z) \mc S(z)\right)(a \otimes b)
\,,
\end{equation}
while, by the translation covariance of $Y$ we have
\begin{equation}\label{eq:ivb}
T Y(z) \mc S(z) (a \otimes b) 
= 
Y(z) (1 \otimes T) \mc S(z) (a \otimes b) 
+ \spd{z} (Y(z))\, \mc S(z)(a \otimes b)
\,.
\end{equation}
Combining equations \eqref{eq:iva} and \eqref{eq:ivb}, we get 
$$
Y(z) \left( [1 \otimes T, \mc S(z)] - \spd{z} \mc S(z)\right)(a \otimes b)=0
\,,
$$
proving claim (c).
Claim (b) is an obvious consequence of (c) and (d).

Next, let us prove the unitarity condition (e).
Since $Y(z)\mc S(z) = Y^{op}(z)$ and $\mc S^{2 1}(z) = (1\ 2)\mc S(z) (1\ 2)$, 
we have
\begin{align*}
&Y(z) \mc S(z) \mc S^{2 1}(-z) (a \otimes b)
= Y^{op}(z) (1\ 2) \mc S(-z) (b \otimes a)\\
&= e^{zT} Y(-z) \mc S(-z) (b \otimes a) = e^{zT} Y^{op}(-z) (b \otimes a) = Y(z)(a \otimes b)
\,.
\end{align*}
For the last equality we used the obvious fact that $(Y^{op})^{op}=Y$.
Claim (e) follows.

Finally, we prove the quantum Yang-Baxter equation (f).
For $N_{1 2}, N_{1 3}, N_{2 3}$ large enough, using three times the $S$-locality, we have
\begin{align*}
& (z_1-z_2)^{N_{1 2}} (z_1-z_3)^{N_{1 3}} (z_2-z_3)^{N_{2 3}}
Y(z_1) (1 \otimes Y(z_2)) (1 \otimes 1 \otimes Y(z_3))  \cdot\\
&\cdot \mc S^{1 2}(z_1 - z_2) \mc S^{1 3}(z_1 - z_3) \mc S^{2 3}(z_2 - z_3) (a \otimes b \otimes c \otimes \vac)\\
&= (z_1-z_2)^{N_{1 2}} (z_1-z_3)^{N_{1 3}} (z_2-z_3)^{N_{2 3}} Y(z_2) (1 \otimes Y(z_1)) (1\ 2) \cdot\\
&\cdot (1 \otimes 1 \otimes Y(z_3)) \mc S^{1 3}(z_1 - z_3) \mc S^{2 3}(z_2 - z_3) (a \otimes b \otimes c \otimes \vac)\\
&= (z_1-z_2)^{N_{1 2}} (z_1-z_3)^{N_{1 3}} (z_2-z_3)^{N_{2 3}} Y(z_2) (1 \otimes Y(z_1)) (1 \otimes 1 \otimes Y(z_3)) \cdot\\
&\cdot \mc S^{2 3}(z_1 - z_3) \mc S^{1 3}(z_2 - z_3) (b \otimes a \otimes c \otimes \vac)\\
&= (z_1-z_2)^{N_{1 2}} (z_1-z_3)^{N_{1 3}} (z_2-z_3)^{N_{2 3}} Y(z_2) (1 \otimes Y(z_3)) (1 \otimes 1 \otimes Y(z_1)) \cdot\\
&\cdot (2\ 3) \mc S^{1 3}(z_2 - z_3) (b \otimes a \otimes c \otimes \vac)\\
&= (z_1-z_2)^{N_{1 2}} (z_1-z_3)^{N_{1 3}} (z_2-z_3)^{N_{2 3}} Y(z_2) (1 \otimes Y(z_3)) (1 \otimes 1 \otimes Y(z_1)) \cdot\\
&\cdot \mc S^{1 2}(z_2 - z_3) (b \otimes c \otimes a \otimes \vac)\\
&= (z_1-z_2)^{N_{1 2}} (z_1-z_3)^{N_{1 3}} (z_2-z_3)^{N_{2 3}} Y(z_3) (1 \otimes Y(z_2)) (1 \otimes 1 \otimes Y(z_1)) \cdot\\
&\cdot  (c \otimes b \otimes a \otimes \vac)
\ \text{mod}\ h^M\,. 
\end{align*}
Similarly one gets
\begin{align*}
& (z_1-z_2)^{N_{1 2}} (z_1-z_3)^{N_{1 3}} (z_2-z_3)^{N_{2 3}}
Y(z_1) (1 \otimes Y(z_2)) (1 \otimes 1 \otimes Y(z_3))  \cdot\\
&\cdot \mc S^{2 3}(z_2 - z_3) \mc S^{1 3}(z_1 - z_3) \mc S^{1 2}(z_1 - z_2) (a \otimes b \otimes c \otimes \vac)\\
&= (z_1-z_2)^{N_{1 2}} (z_1-z_3)^{N_{1 3}} (z_2-z_3)^{N_{2 3}}
Y(z_3) (1 \otimes Y(z_2)) (1 \otimes 1 \otimes Y(z_1))  \cdot\\
&\cdot  (c \otimes b \otimes a \otimes \vac)\ \text{mod}\ h^M.
\end{align*}
Therefore, for every $M$ there exist $N_{12},N_{13},N_{23}>>0$ such that
\begin{align*}
&(z_1-z_2)^{N_{1 2}} (z_1-z_3)^{N_{1 3}} (z_2-z_3)^{N_{2 3}} 
Y(z_1) (1 \otimes Y(z_2)) (1 \otimes 1 \otimes Y(z_3)) \cdot \\
& \left(\mc S^{1 2}(z_1 - z_2) \mc S^{1 3}(z_1 - z_3) \mc S^{2 3}(z_2 - z_3) \right.\\
&\left. - \mc S^{2 3}(z_2 - z_3) \mc S^{1 3}(z_1 - z_3) \mc S^{1 2}(z_1 - z_2) \right) 
(a \otimes b \otimes c\otimes\vac)
\equiv 0 \mod h^M
\end{align*}
in $V_h((z_1))((z_2))((z_3))$.
On the other hand, multiplication by $z_1-z_2$, $z_1-z_3$ or $z_2-z_3$
is injective in $V_h((z_1))((z_2))((z_3))$, and commutes with multiplication by $h$.
Hence, 
\begin{align*}
& Y(z_1) (1 \otimes Y(z_2)) (1 \otimes 1 \otimes Y(z_3))
\left(\mc S^{1 2}(z_1 - z_2) \mc S^{1 3}(z_1 - z_3) \mc S^{2 3}(z_2 - z_3) \right.\\
&\left. - \mc S^{2 3}(z_2 - z_3) \mc S^{1 3}(z_1 - z_3) \mc S^{1 2}(z_1 - z_2) \right) 
(a \otimes b \otimes c\otimes\vac)
\end{align*}
vanishes modulo $h^M$ for every $M$,
i.e. vanishes identically.
\end{proof}
\begin{remark}\label{rem:ek-braided}
Etingof and Kazhdan define in \cite{EK5} the braided vertex algebra 
as free $\mb K[[\partial]]$-module
with a topological state-field correspondence $Y$
and a braiding $\mc S$
satisfying the $\mc S$-locality \eqref{S-locality}
and (b), (e) and (f) of Proposition \ref{prop:non-deg}.
\end{remark}
\begin{remark}\label{EK5Cor.1.3}
As pointed out in \cite[Cor.1.3]{EK5},
in a braided vertex algebra the assumption that
$Y(z)(T\otimes1)=\spd z Y(z)$ is not required
if the left shift condition holds.
Indeed, in the proof of Lemma \ref{EK5Lem.1.2} this assumption is not needed.
Moreover, by the left shift condition and Taylor expansion, 
one proves that $\iota_{z, u} \mc S(z+u) = \left( e^{-uT} \otimes 1 \right) \mc S(z) \left( e^{uT} \otimes 1\right)$. 
Using Lemmas \ref{EK5Lem.1.2} and \ref{QK15Lem.1.2b}, the following equalities follow:
\begin{align*}
Y(z) \mc S(z) \left( e^{uT} \otimes 1 \right) 
= e^{zT} Y(-z) \left( 1 \otimes e^{uT} \right) (1\,2) 
= e^{(z+u)T} \iota_{z,u}Y(-z-u) (1 \,2)\\
= \iota_{z,u} \left( Y(z+u) \mc S(z+u) \right) 
= \iota_{z,u} Y(z+u) \left( e^{-uT} \otimes 1 \right) \mc S(z) \left( e^{uT} \otimes 1\right).
\end{align*}
Since $\mc S(z)  \left( e^{uT} \otimes 1 \right)$ is invertible, one has 
$$
Y(z) = \iota_{z,u} Y(z+u) \left( e^{-uT} \otimes 1 \right)
$$ 
from which 
$$
Y(z)\left( e^{uT} \otimes 1 \right) = \iota_{z,u} Y(z+u) = e^{u \spd{z}} Y(z)
\,.
$$ 
The claim follows by taking the coefficient of $u$ in both sides of the above equation.
\end{remark}

\subsection{Quantum vertex algebras}
\label{sec:3.3}

\begin{definition}
\label{QVADef}
A \emph{quantum vertex algebra} is a braided vertex algebra satisfying the \emph{associativity relation} (cf. \eqref{fAssociativity}): for any $a, b, c \in V$ and $M \in \mb Z_{\geq 0}$ there exists $N \in \mb Z_{\geq 0}$
such that
\begin{equation}
\label{hfAssociativity}
\begin{split}
\gse{z}{w} & (z+w)^N Y(z+w) \big(1\otimes Y(w) \big) (a \otimes b \otimes c)\\
&= (z+w)^N Y(w) \big( Y(z)\otimes 1 \big) (a \otimes b \otimes c)\ \textrm{mod}\ h^M.
\end{split}
\end{equation}
\end{definition}

\begin{proposition}[{\cite[Prop.1.4]{EK5}}]
\label{EK5Prop.1.4}
If a braided vertex algebra $(V, \vac, Y, \mc S)$ satisfies the following \emph{hexagon relation}:
\begin{equation}
\label{HexagonRelation}
\mc{S}(w)\big( Y(z)\otimes 1 \big) = (Y(z)\otimes 1)\Sp{2}{3}(w)\gse{w}{z}\Sp{1}{3}(z+w)
\,,
\end{equation}
then the  associativity relation \eqref{hfAssociativity} holds.
Consequently, $V$ is a quantum vertex algebra.
\end{proposition}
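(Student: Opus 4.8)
The plan is to reduce the associativity relation \eqref{hfAssociativity} to the quasi-associativity relation \eqref{hfQuasiAssociativity} proved in Proposition \ref{EK5Prop.1.1}, using the hexagon relation \eqref{HexagonRelation} to eliminate the braiding $\mc S$ that appears in \eqref{hfQuasiAssociativity}. More precisely, I would start with the right-hand side of \eqref{hfQuasiAssociativity},
$$
(z+w)^N Y(w)\mc{S}(w) \big( Y(z) \otimes 1 \big) (a \otimes b \otimes c)\,,
$$
and apply the hexagon relation \eqref{HexagonRelation} to rewrite $\mc S(w)(Y(z)\otimes 1)$ as $(Y(z)\otimes 1)\Sp{2}{3}(w)\gse{w}{z}\Sp{1}{3}(z+w)$. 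This makes the right-hand side of \eqref{hfQuasiAssociativity} equal to
$$
(z+w)^N Y(w) \big(Y(z) \otimes 1\big) \Sp{2}{3}(w)\gse{w}{z}\Sp{1}{3}(z+w) (a\otimes b\otimes c)\,,
$$
which, after reindexing the action of $Y(z)$ on the first two factors, is exactly the left-hand side of \eqref{hfQuasiAssociativity} but with the three factors rearranged; I need to check that the $\mc S^{23}$ and $\mc S^{13}$ bookkeeping matches up with the $\mc S^{23}(w)\mc S^{13}(z+w)$ appearing inside the LHS of \eqref{hfQuasiAssociativity}. Comparing the two sides of \eqref{hfQuasiAssociativity} after this substitution should yield, modulo $h^M$,
$$
\gse{z}{w}(z+w)^N Y(z+w)\big(1\otimes Y(w)\big)(a\otimes b\otimes c)
= (z+w)^N Y(w)\big(Y(z)\otimes 1\big)(a\otimes b\otimes c)\,,
$$
which is \eqref{hfAssociativity}. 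For the first sentence of the conclusion (``consequently, $V$ is a quantum vertex algebra'') there is nothing more to do, since a quantum vertex algebra is by Definition \ref{QVADef} precisely a braided vertex algebra satisfying \eqref{hfAssociativity}.

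The main technical point I expect to require care is the matching of the $\mc S$-indices and of the geometric expansions $\gse{z}{w}$ versus $\gse{w}{z}$: the quasi-associativity relation has $\Sp{2}{3}(w)\Sp{1}{3}(z+w)$ sitting inside $\gse zw$, whereas the hexagon relation produces $\Sp{2}{3}(w)\gse wz\Sp{1}{3}(z+w)$, and one has to verify that after the change of variables these expansions are consistent in the appropriate ring of formal series (Laurent in $z+w$ and in $w$, then re-expanded). Since multiplication by $h$ and the relevant expansion maps are injective on these spaces, equalities that hold modulo every $h^M$ hold identically wherever needed, so the ``mod $h^M$'' qualifiers propagate harmlessly. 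I would also make sure that the exponent $N$ coming from Proposition \ref{EK5Prop.1.1} (which depends on $a,b,c$ and $M$) is the same $N$ that works in \eqref{hfAssociativity}, possibly after enlarging it; this is automatic since enlarging $N$ only multiplies both sides by a further power of $(z+w)$, which is harmless.

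The hardest part will be pinning down the precise form of the reindexing that turns $Y(w)(Y(z)\otimes 1)\Sp{2}{3}(w)\Sp{1}{3}(z+w)$ back into the expression appearing in \eqref{hfQuasiAssociativity}; in other words, confirming that the braiding factors produced by the hexagon relation are exactly the ones that Proposition \ref{EK5Prop.1.1} inserted, so that they cancel rather than accumulate. This is bookkeeping rather than a genuine obstacle, and the whole argument is a short chain of substitutions once the hexagon relation and the quasi-associativity relation are both available.
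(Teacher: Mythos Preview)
Your approach is correct and matches the paper's one-sentence proof: associativity \eqref{hfAssociativity} follows immediately from quasi-associativity \eqref{hfQuasiAssociativity} once the hexagon relation rewrites $\mc S(w)(Y(z)\otimes 1)$ on the right-hand side, after which the common factor $\Sp{2}{3}(w)\Sp{1}{3}(z+w)$ can be cancelled from both sides (invertibility of $\mc S$ modulo $h^M$, together with enlarging $N$ so that $(z+w)^N\Sp{1}{3}(z+w)(a\otimes b\otimes c)$ has only non-negative powers of $z+w$ modulo $h^M$, handles exactly the expansion issue you flag). One remark: your sentence claiming the hexagon output is ``exactly the left-hand side of \eqref{hfQuasiAssociativity} but with the three factors rearranged'' is a slip---the LHS has $Y(z+w)(1\otimes Y(w))$, not any reindexing of $Y(w)(Y(z)\otimes 1)$---but the paragraph that follows it states the correct comparison, so this does not affect the argument.
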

\begin{proof}
The associativity relation \eqref{hfAssociativity} is an immediate consequence 
of the quasi-associativity \eqref{hfQuasiAssociativity}
and the hexagon relation \eqref{HexagonRelation}.
\end{proof}

\begin{proposition}
\label{July2017ThmRem1}
Let $(V, \vac, T, Y, \mc S)$ be a quantum vertex algebra.
Then the hexagon relation \eqref{HexagonRelation}
holds modulo $\ker Y(z)$.
\end{proposition}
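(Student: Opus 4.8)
The plan is to run the argument of Proposition \ref{EK5Prop.1.1} in reverse: quasi-associativity \eqref{hfQuasiAssociativity} holds in any braided vertex algebra, and in a quantum vertex algebra we additionally have the associativity relation \eqref{hfAssociativity}; comparing the two should force the hexagon relation, at least after applying $Y(z)$ (i.e. modulo $\ker Y(z)$). First I would rewrite \eqref{hfQuasiAssociativity} so that its left-hand side is expressed through the plain associativity relation. Starting from the true associativity \eqref{hfAssociativity} applied to the triple $(a,b,c)$, and using $Y\mc S = Y^{op}$ (Lemma \ref{EK5Lem.1.2}) together with the shift/covariance identities of Lemmas \ref{QK15Lem.1.2} and \ref{QK15Lem.1.2b} and Proposition \ref{prop:non-deg}(b)--(d), one massages $Y(w)\mc S(w)(Y(z)\otimes 1)(a\otimes b\otimes c)$ into a form involving $\gse{z}{w}Y(z+w)(1\otimes Y(w))$ acting on $(a\otimes b\otimes c)$ with some braiding factors inserted.

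The precise bookkeeping goes as follows. By quasi-associativity \eqref{hfQuasiAssociativity}, for every $M$ and $N\gg0$,
\begin{equation*}
(z+w)^N Y(w)\mc S(w)\big(Y(z)\otimes 1\big)(a\otimes b\otimes c)
\equiv
\gse{z}{w}(z+w)^N Y(z+w)\big(1\otimes Y(w)\big)\Sp23(w)\Sp13(z+w)(a\otimes b\otimes c)
\end{equation*}
modulo $h^M$. On the other hand, applying the genuine associativity relation \eqref{hfAssociativity} (valid in a quantum vertex algebra) not to $(a,b,c)$ but after first acting by $\mc S(w)$ on the first two slots — equivalently, applying \eqref{hfAssociativity} and then inserting $\mc S$ — one gets, again mod $h^M$ and for $N\gg0$,
\begin{equation*}
(z+w)^N Y(w)\big(Y(z)\mc S(z)\otimes 1\big)(a\otimes b\otimes c)
\equiv
\gse{z}{w}(z+w)^N Y(z+w)\big(1\otimes Y(w)\big)\mc S^{12}(z+w)(a\otimes b\otimes c).
\end{equation*}
Here I must be a little careful: what I really want is $Y(w)(\mc S(w)(Y(z)\otimes1))$ versus $Y(w)((Y(z)\otimes1)\Sp23(w)\gse wz\Sp13(z+w))$, so the clean route is to apply \eqref{hfAssociativity} to the shifted/braided triple directly, using that $Y(z)\mc S(z)=Y^{op}(z)$ commutes appropriately with $\mc S^{13}$ on the untouched slots. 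Equating the two expressions for $Y(w)$ of the relevant combination, and cancelling the common factor $(z+w)^N$ and the operator $\gse zw$ — both of which are injective on the relevant space of $V_h((z))((w))$-valued distributions, exactly as in the end of the proof of Proposition \ref{prop:non-deg} — yields
\begin{equation*}
Y(z+w)\big(1\otimes Y(w)\big)\Big(\mc S^{12}(z+w)-\Sp23(w)^{-1}\,(\text{hexagon RHS applied})\Big)(a\otimes b\otimes c)\equiv 0
\end{equation*}
for all $a,b,c$, and then, running the cancellation once more in the $w$-variable and peeling off $Y(w)$ by Goddard-type uniqueness (Proposition \ref{BraidedGoddard}) with $c=\vac$, one is left with $Y(z+w)$ applied to the hexagon discrepancy, i.e. the hexagon relation modulo $\ker Y(z+w) = \ker Y(z)$ (after renaming).

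\textbf{Expected main obstacle.} The genuinely delicate point is not any single identity but keeping the variable-shifts, the expansion operators $\gse{z}{w}$, and the slot-labels of the various $\mc S^{ij}$ consistent while passing between the ``quasi'' and the ``genuine'' associativity relations — in particular making sure that the factor $\gse wz\Sp13(z+w)$ appearing in the hexagon \eqref{HexagonRelation} is exactly the leftover after all cancellations, rather than $\Sp13$ evaluated at a wrong argument or with a wrong expansion. A secondary subtlety is the $\mod h^M$ / $N\gg0$ quantifier juggling: each of the two associativity inputs produces its own $N$ depending on $M$, so one takes the max, does the cancellation of $(z+w)^N$ and of $z-w$-type factors (legitimate since multiplication by $z+w$ is injective on $V_h((z))((w))$ and commutes with multiplication by $h$), and only then lets $M\to\infty$ to conclude the identity holds on the nose modulo $\ker Y$. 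Once the dictionary is set up correctly, every individual step is one of the lemmas already proved above.
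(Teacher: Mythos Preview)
Your overall plan---compare the quasi-associativity relation \eqref{hfQuasiAssociativity} with the genuine associativity relation \eqref{hfAssociativity} and cancel $(z+w)^N$---is exactly the paper's approach, and the cancellation step (injectivity of multiplication by $z+w$ on $V((w))((z))$, then let $M\to\infty$) is correct. But the execution misfires at two points.

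First, you apply associativity to the wrong braided input. You insert $\mc S^{12}$ (and with argument $z+w$ on one side, $z$ on the other---already inconsistent), when what you need is to apply \eqref{hfAssociativity} with the triple replaced by $\Sp{2}{3}(w)\Sp{1}{3}(z+w)(a\otimes b\otimes c)$. Modulo $h^M$ this is a finite sum $\sum_i a_i\otimes b_i\otimes c_i\,f_i(z,w)$, so associativity applies term by term and yields
\[
\gse{z}{w}(z+w)^N Y(z+w)(1\otimes Y(w))\Sp{2}{3}(w)\Sp{1}{3}(z+w)(a\otimes b\otimes c)
=(z+w)^N Y(w)(Y(z)\otimes 1)\Sp{2}{3}(w)\Sp{1}{3}(z+w)(a\otimes b\otimes c)
\]
modulo $h^M$. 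Now the left-hand side is literally the left-hand side of \eqref{hfQuasiAssociativity}, so equating the right-hand sides gives
\[
(z+w)^N Y(w)\mc S(w)(Y(z)\otimes 1)=(z+w)^N Y(w)(Y(z)\otimes 1)\Sp{2}{3}(w)\Sp{1}{3}(z+w)\ \ \mathrm{mod}\ h^M,
\]
and after dividing by $(z+w)^N$ (with the $\gse{w}{z}$ expansion) and letting $M\to\infty$ you are done.

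Second, the Goddard-uniqueness step you append is both unnecessary and wrong in direction. ``Hexagon modulo $\ker Y$'' means precisely that $Y(w)$ applied to both sides of \eqref{HexagonRelation} agree---which is the displayed identity above. You do not want (and cannot get) a statement with only a single $Y$ applied to some element of $V^{\widehat\otimes 2}$; the hexagon discrepancy lives in $V^{\widehat\otimes 2}$ already, so one application of $Y$ is exactly what ``modulo $\ker Y$'' calls for. There is nothing further to peel off.
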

\begin{proof}
By Proposition \ref{EK5Prop.1.1},
we have the quasi-associativity relation \eqref{hfQuasiAssociativity}.
Comparing it with the associativity relation \eqref{hfAssociativity}, 
we get
\begin{align*}
&(z+w)^NY(w)\mc{S}(w)(Y(z) \otimes 1) (a \otimes b \otimes c)\\
&= (z+w)^NY(w)(Y(z)\otimes 1) \Sp{2}{3}(w)\Sp{1}{3}(z+w) (a \otimes b \otimes c)
\quad mod\ h^M
\,,
\end{align*}
for $N$ large enough.
Both the LHS and the RHS above, module $h^M$,
lie in the space $V((w))((z))$.
Hence, we can multiply by $\gse{w}{z}(z+w)^{-N}$, to get
\begin{align*}
&Y(w)\mc{S}(w)(Y(z) \otimes 1) (a \otimes b \otimes c)\\
&= \gse{w}{z} Y(w)(Y(z)\otimes 1) \Sp{2}{3}(w)\Sp{1}{3}(z+w) (a \otimes b \otimes c)
\quad mod\ h^M
\,.
\end{align*}
Since the above equation holds modulo $h^M$ for every $M$, it holds identically.
\end{proof}

\begin{remark}
Etingof and Kazhdan define in \cite{EK5} a quantum vertex algebra
as a braided vertex algebra (in their sense, cf. Remark \ref{rem:ek-braided})
satisfying the hexagon relation \eqref{HexagonRelation}.
They also construct an example of a vertex algebra for which
the kernel of the map $Y(z)$ is zero
(as well as the map in Proposition \ref{prop:non-deg}(f)).
\end{remark}

\subsection{Locality properties of braided and quantum vertex algebras}
\label{sec:3.35}

Let $V$ be a topologically free $\Kh$-module. 
An $\End_{\Kh}V$-valued formal distribution $a(z,w)$ is called \emph{local} if, 
for any $M \in \mb Z_{\geq0}$, there exists $N \in\mb Z_{\geq 0}$ such that
\begin{equation}
(z-w)^N a(z,w) = 0\ \textrm{mod}\ h^M.
\end{equation}
Similarly, $a(z,w)$ is called \emph{local on the element} $b \in V$ if, 
for any $M \in \mb Z_{\geq0}$, there exists $N \in\mb Z_{\geq 0}$ 
(possibly depending on $b$), such that
\begin{equation}
(z-w)^N a(z,w)b = 0\ \textrm{mod}\ h^M.
\end{equation}
Two $\End_{\Kh}V$-valued quantum fields $a(z)$ and $b(w)$ are called local (respectively local on the vector $c \in V$) if $[a(z),b(w)]$ (respectively $[a(z),b(w)]c$) is local.

Proposition \ref{BKProp.4.1} still holds for
topologically free $\mb{K}[[h]]$-modules:
\begin{proposition}
\label{BKProp.4.1h}
Let $V$ be a topologically free $\mb K[[h]]$-module,
let $\vac\in V$,
and let $Y:\,V\otimes V\to V_h((z))$ 
satisfy the vacuum axiom (v) and translation covariance axiom (vi) of Definition \ref{BVADef}.
Then $Y$ satisfies the associativity relation \eqref{hfAssociativity} if and only if 
all pairs $\big( Y(z)(a\otimes-), Y^{op}(w)(b\otimes-) \big)$ are local on each element of $V$.
\end{proposition}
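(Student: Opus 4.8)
The statement is the exact topologically-free analogue of Proposition \ref{BKProp.4.1}, so the plan is to adapt that proof step by step, inserting ``mod $h^M$'' quantifiers wherever needed and using the $h$-adic analogues of the formal-distribution tools (in particular the fact that multiplication by $z-w$ is injective on $V_h((z))((w))$ modulo any power of $h$). First I would establish the key rewriting: for a topological state-field correspondence $Y$, applying Lemma \ref{QK15Lem.1.2} and Lemma \ref{QK15Lem.1.2b} (the $h$-adic versions of Proposition \ref{BKProp.2.7}) one has $Y^{op}(w)(b\otimes-)=e^{wT}Y(-w)(- \otimes b)$-type identities and $e^{wT}Y(z)(1\otimes e^{-wT})=\iota_{z,w}Y(z+w)$ verbatim. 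This lets me translate a locality statement for the pair $(Y(z)(a\otimes-),Y^{op}(w)(b\otimes-))$ on $c$ into a shifted form involving $Y(z+w)(1\otimes Y(w))$.

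For the forward direction, assume associativity \eqref{hfAssociativity}. Fix $a,b,c$ and $M$; pick $N$ as in \eqref{hfAssociativity}, so in $V_h((z))((w))$ one has
\begin{equation*}
\gse{z}{w}(z+w)^N Y(z+w)(1\otimes Y(w))(a\otimes b\otimes c)=(z+w)^N Y(w)(Y(z)\otimes 1)(a\otimes b\otimes c)\ \mathrm{mod}\ h^M.
\end{equation*}
I would then substitute $z\mapsto z-w$ (a legitimate operation after multiplying by a large enough power of $z-w$ to land in $V_h((z))((w))$, exactly as in the classical argument and as used in the proof of Lemma \ref{EK5Lem.1.2}) and use $Y(w)(Y(z-w)\otimes1)=Y(w)(1\otimes Y^{op})$-style manipulations via Proposition \ref{BKProp.2.7}(c) to recognize the right-hand side as $(z-w)^{N'}Y(z)(1\otimes X(w))$ with $X=Y$, while the left-hand side becomes $(z-w)^{N'}X(w)(1\otimes Y(z))$ after moving the $e^{\pm wT}$ through; tracking the bookkeeping shows the pair $(Y(z)(a\otimes-),Y^{op}(w)(b\otimes-))$ is local on $c$ mod $h^M$, for every $M$, which is the required $h$-adic locality.

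Conversely, assume all pairs $(Y(z)(a\otimes-),Y^{op}(w)(b\otimes-))$ are local on each element mod $h^M$. Fix $a,b,c,M$ and take $N$ realizing this locality. Using $Y^{op}(w)(b\otimes-)=e^{wT}Y(-w)(-\otimes b)$ and Lemma \ref{QK15Lem.1.2b} to convert the shift, together with the vacuum axiom (evaluating an auxiliary variable at $0$ on a series that is a polynomial-times-power-series in that variable, as in the proof of Lemma \ref{EK5Lem.1.2}), I would turn the locality relation into
\begin{equation*}
(z-w)^N Y(z-w)(1\otimes Y(w))(a\otimes b\otimes c)=(z-w)^N Y(w)(Y(z-w)\otimes1)(a\otimes b\otimes c)\ \mathrm{mod}\ h^M
\end{equation*}
in $V_h((w))((z))$, then substitute back $z\mapsto z+w$ and reassemble $\gse{z}{w}$ factors to obtain \eqref{hfAssociativity} mod $h^M$. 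Since $M$ is arbitrary, associativity follows. \textbf{The main obstacle} is the careful handling of the variable substitution $z\leftrightarrow z\pm w$ and of evaluation at $0$ in the $h$-adic setting: one must verify that after multiplying by a suitable power of $z-w$ the relevant expressions genuinely lie in $V_h((z))((w))$ (resp. $V_h((w))((z))$) modulo $h^M$ so that the substitution and the ``kill the $(z-w)^N$ factor'' steps are valid — i.e. that injectivity of multiplication by $z-w$ on these spaces modulo $h^M$ can be invoked, exactly the kind of argument already used in the proof of Proposition \ref{prop:non-deg}. The rest is a faithful $h$-adic transcription of \cite[Prop.4.1]{BK}.
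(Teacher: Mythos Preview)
Your proposal uses the right ingredients (the definition of $Y^{op}$ and Lemma \ref{QK15Lem.1.2b}) and would work, but it is considerably more elaborate than what is actually needed. The paper's proof is a single short chain of equivalences that handles both directions at once: writing out the locality condition for the pair $(Y(z)(a\otimes-),Y^{op}(w)(b\otimes-))$ on $c$, unwinding $Y^{op}(w)=e^{wT}Y(-w)(1\,2)$ on both sides, and then applying Lemma \ref{QK15Lem.1.2b} to rewrite $Y(z)(1\otimes e^{wT})$ as $e^{wT}\iota_{z,w}Y(z-w)$ yields directly
\[
(z-w)^N e^{wT}\iota_{z,w}Y(z-w)(1\otimes Y(-w))(a\otimes c\otimes b)=(z-w)^N e^{wT}Y(-w)(Y(z)\otimes 1)(a\otimes c\otimes b)\ \mathrm{mod}\ h^M,
\]
which after cancelling $e^{wT}$, replacing $w$ by $-w$, and swapping the names $b\leftrightarrow c$ is literally \eqref{hfAssociativity}. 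No substitution $z\mapsto z\pm w$, no evaluation of an auxiliary variable at $0$, and no appeal to injectivity of multiplication by $z-w$ is required: the factor $(z-w)^N$ is simply carried along throughout, so the ``obstacle'' you flagged does not arise. Your separate forward/backward arguments with extra machinery are correct but unnecessary.
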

\begin{proof}
The locality assumption on $\big( Y(z)(a\otimes-), Y^{op}(w)(b\otimes-) \big)$
says that
$$
(z-w)^NY(z)(1\otimes Y^{op}(w))(a\otimes b\otimes c)
=
(z-w)^NY^{op}(w)(1\otimes Y(z))(b\otimes a\otimes c)
\,\text{mod}\,h^M
\,,
$$
for some $N \in \mb Z_{\geq 0}$ depending on $a,b,c\in V,\,M\in\mb Z_{\geq0}$.
By Definition \eqref{eq:Yop} of $Y^{op}$, the above equation can be rewritten as
\begin{align*}
& (z-w)^NY(z)(1\otimes e^{wT}Y(-w))(a\otimes c\otimes b) \\
& =
(z-w)^Ne^{wT}Y(-w)(Y(z)\otimes1)(a\otimes c\otimes b)
\,\text{mod}\,h^M
\,.
\end{align*}
By Lemma \ref{QK15Lem.1.2b} this is equivalent to
\begin{align*}
& (z-w)^Ne^{wT}\iota_{z,w}Y(z-w)(1\otimes Y(-w))(a\otimes c\otimes b) \\
& =
(z-w)^Ne^{wT}Y(-w)(Y(z)\otimes1)(a\otimes c\otimes b)
\,\text{mod}\,h^M
\,,
\end{align*}
which is the same as the associativity relation \eqref{hfAssociativity},
after changing the sign of $w$ and exchanging $b$ and $c$.
\end{proof}

The following proposition may be viewed as a quantum analogue of the Goddard's uniqueness Theorem (cf. Lemma \ref{Goddard}).
\begin{proposition}
\label{BraidedGoddard}
Let $V$ be a braided vertex algebra 
and let $a(z)$ be a translation covariant $\End_{\mb K[[h]]}V$-valued quantum field, 
such that, for every $b\in V$,
the pair of quantum fields $(a(z),Y(z)(b \otimes -))$ 
is local on every vector of $V$.
Then
\begin{equation}\label{eq:matteo}
a(z) = Y(z)\mc{S}(z)(a \otimes -) 
\,\,\big(= Y^{op}(z)(a \otimes -)\,\big)
\end{equation}
where $a=\Res_z z^{-1}a(z)\vac$.
\end{proposition}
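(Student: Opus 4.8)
The strategy is to mimic the classical proof of Goddard's uniqueness theorem (Lemma \ref{Goddard}) in the braided, $h$-adic setting, using $Y^{op}$ in place of $Y$ and exploiting Lemma \ref{EK5Lem.1.2}, which identifies $Y\mc S$ with $Y^{op}$. First I would fix $b \in V$ and $M \in \mb Z_{\geq 0}$. By hypothesis the pair $(a(z), Y(w)(b\otimes -))$ is local on every vector; in particular it is local on $\vac$, so there is $N = N(b,M)$ with
\begin{equation*}
(z-w)^N a(z) Y(w)(b \otimes \vac) = (z-w)^N Y(w)\big(b \otimes a(z)\vac\big) \mod h^M.
\end{equation*}
Using Proposition \ref{BKProp.2.7}(a) (which holds for topological state-field correspondences), $Y(w)(b\otimes\vac) = e^{wT}b$, and using Lemma \ref{QK15Lem.1.2} together with the translation covariance of $a(z)$, $a(z)\vac = e^{zT}a$. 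So the right-hand side becomes $(z-w)^N Y(w)(b \otimes e^{zT}a)$ and the left-hand side becomes $(z-w)^N a(z) e^{wT} b$.

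Next I would run the same evaluation-at-zero trick as in the proof of Lemma \ref{EK5Lem.1.2}: apply $e^{-wT}$ to both sides and use Lemma \ref{QK15Lem.1.2b} to pull $e^{-wT}$ through $a(z)$ and through $Y(w)$, turning the equation into one of the form
\begin{equation*}
(z-w)^N \gse{z}{w} a(z-w)\, e^{wT}\, b \big|_{\text{shifted}} = (z-w)^N Y(-w)\big(Y(z)\otimes 1\big)(\dots)
\end{equation*}
(more precisely, the left side becomes $(z-w)^N \iota_{z,w}\, a(z-w) e^{wT} b$ type expression). For $N$ large enough both sides, modulo $h^M$, lie in $V[[z-w]]$ as series in $z$, so one may evaluate at $z=0$, multiply back by the appropriate power of $-w$ and by $e^{-wT}$, rename $-w \to z$, and conclude
\begin{equation*}
a(z)b = e^{zT} Y(-z)(b \otimes a) \mod h^M.
\end{equation*}
Since this holds for every $M$, it holds identically, i.e. $a(z)b = Y^{op}(z)(a\otimes b) = Y(z)\mc S(z)(a\otimes b)$, where the last equality is Lemma \ref{EK5Lem.1.2}. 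As $b$ was arbitrary, \eqref{eq:matteo} follows.

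\textbf{Main obstacle.} The delicate point is the $h$-adic bookkeeping: the locality exponent $N$ now depends on $M$, so one cannot fix $N$ once and for all, and one must be careful that after dividing by $(z-w)^N$ (legitimate because this multiplication is injective on $V_h((z))((w))$ and commutes with multiplication by $h$) and evaluating at $z=0$, no spurious $h$-adic contributions survive. The other subtlety is justifying that $(z-w)^N a(z-w)\mc S(\cdots)$ (or rather the relevant shifted quantum field applied to $b$) has only nonnegative powers of $z$ modulo $h^M$ for $N$ large — this uses that $a(z)b \in V_h((z))$, i.e. the $h$-adic vanishing of its negative-order coefficients, exactly as in the proof of Lemma \ref{EK5Lem.1.2}. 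Modulo these $h$-adic care points, the argument is a direct transcription of the classical Goddard uniqueness proof with $Y^{op}$ replacing $Y$.
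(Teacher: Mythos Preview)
Your strategy is correct and leads to a valid proof, but it follows a different route from the paper's. The paper does not try to prove $a(z)=Y^{op}(z)(a\otimes-)$ directly. Instead, after writing the locality equation on $\vac$ as
\[
(z-w)^{N_1} a(z)Y(w)(b\otimes\vac)=(z-w)^{N_1} Y(w)\big(1\otimes Y(z)\big)(b\otimes a\otimes\vac)\ \textrm{mod}\ h^M,
\]
it invokes the $\mc S$-locality axiom \eqref{S-locality} on $\vac$ to replace the right-hand side by $(z-w)^{N_2}Y(z)\big(1\otimes Y(w)\big)\big(\mc S(z-w)(a\otimes b)\otimes\vac\big)$, and then evaluates at $w=0$ (both sides having no negative powers of $w$ for large $N$), arriving directly at $z^N Y(z)\mc S(z)(a\otimes b)=z^N a(z)b$. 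Your route, by contrast, rewrites the left-hand side via Lemma~\ref{QK15Lem.1.2b} applied to the translation covariant field $a(z)$ and evaluates at $z=0$, obtaining $a(z)b=Y^{op}(z)(a\otimes b)$ first and then citing Lemma~\ref{EK5Lem.1.2}. Your argument uses the braided structure only through that final lemma; the paper's uses $\mc S$-locality explicitly in the middle. Both are natural adaptations of the classical Goddard argument.

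One small correction to your write-up: you do not actually need to ``apply $e^{-wT}$ to both sides'' or pull anything through $Y(w)$. The clean step, exactly paralleling the proof of Lemma~\ref{EK5Lem.1.2}, is to use Lemma~\ref{QK15Lem.1.2b} on the translation covariant field $a(z)$ to rewrite $a(z)e^{wT}b=e^{wT}\iota_{z,w}a(z-w)b$; then for $N$ large enough $(z-w)^N a(z-w)b\in V[[z-w]]$ mod $h^M$, so both sides of the locality equation have only nonnegative powers of $z$, and you may set $z=0$ to get $(-w)^N e^{wT}a(-w)b=(-w)^N Y(w)(b\otimes a)$ mod $h^M$. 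Dividing by $(-w)^N$ (legitimate in $V_h((w))$) and renaming gives the conclusion. Your concern about ``dividing by $(z-w)^N$'' does not actually arise: after evaluating one variable you divide by a power of a single variable, which is harmless.
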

\begin{proof}
By assumption, for every $b \in V$ and $M \in \mb Z_{\geq0}$, 
there exists $N_1 \in \mb Z_{\geq0}$ such that
\begin{displaymath}
(z-w)^{N_1} a(z)Y(w) (b \otimes \vac) = (z-w)^{N_1} Y(w) \big( b \otimes a(z)\vac \big)\ \textrm{mod}\ h^M.
\end{displaymath}
By Lemma \ref{QK15Lem.1.2}, $a(z)\vac = e^{zT}a = Y(z)(a \otimes \vac)$ from which
\begin{equation}
\label{AGT1}
(z-w)^{N_1} a(z)Y(w)(b\otimes\vac) 
= (z-w)^{N_1} Y(w)\big( 1 \otimes Y(z) \big) (b \otimes a \otimes \vac)\ \textrm{mod}\ h^M.
\end{equation}
On the other hand, by $\mc{S}$-locality \eqref{S-locality}, 
there exists $N_2 \in \mb Z_{\geq0}$ such that
\begin{equation}
\begin{split}
\label{AGT2}
(z-w)^{N_2} &Y(z) \big( 1 \otimes Y(w) \big) \big( \mc{S}(z-w)(a \otimes b)\otimes \vac \big)\\
&=(z-w)^{N_2} Y(w) \big(1 \otimes Y(z)\big)(b \otimes a \otimes \vac)\ \textrm{mod}\ h^M.
\end{split}
\end{equation}
Combining equations \eqref{AGT1} and \eqref{AGT2}, 
we have, for $N = \max\{N_1,N_2\}$,
\begin{equation}
\begin{split}
\label{AGT3}
(z-w)^{N} &Y(z)(1 \otimes Y(w))(\mc{S}(z-w)(a \otimes b)\otimes \vac)\\
&=(z-w)^{N} a(z)Y(w) (b \otimes \vac)\ \textrm{mod}\ h^M.
\end{split}
\end{equation}
Note that $Y(w)(b\otimes\vac)$ has no negative powers of $w$, by Lemma \ref{QK15Lem.1.2}.
Hence, the RHS of \eqref{AGT3} has no negative powers of $w$
and can be evaluated at $w=0$.
Likewise, in the LHS $Y(w)(\cdot\,\otimes\vac)$ has no negative powers of $w$,
and, for $N$ large enough, there are no negative powers of $(z-w)^N$ as well.
We thus get, letting $w=0$ and using Lemma \ref{QK15Lem.1.2},
\begin{align*}
z^{N} Y(z)\mc{S}(z)(a \otimes c) = z^{N} a(z)c\ \textrm{mod}\ h^M\,,
\end{align*}
which obviously implies
\begin{equation}
\label{AGT4}
Y(z)\mc{S}(z)(a \otimes b) = a(z)b\ \textrm{mod}\ h^M.
\end{equation}
Since equation \eqref{AGT4} holds for every $M \in \mb Z_{\geq0}$,
we get the claim \eqref{eq:matteo}.
\end{proof}

\begin{corollary}\label{qgoddard}
Let $V$ be a quantum vertex algebra.
Let $a(z)$ be a translation covariant $\End_{\Kh}V$-valued quantum field.
Then each pair of quantum fields 
$$
(a(z),Y(z)(b \otimes -))
\,\,,\,\,\,\,
b\in V\,,
$$
is local on every element of $V$
if and only if $a(z)=Y^{op}(z)(a\otimes-)$,
where $a=a(z)\vac|_{z=0}$.
\end{corollary}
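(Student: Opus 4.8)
The plan is to derive the corollary from two facts already established, one for each implication, with essentially no new computation.

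For the implication ``local $\Rightarrow$ $a(z)=Y^{op}(z)(a\otimes-)$'' I would invoke Proposition \ref{BraidedGoddard} directly: a quantum vertex algebra is in particular a braided vertex algebra, and the hypothesis that $(a(z),Y(z)(b\otimes-))$ be local on every vector of $V$ for all $b\in V$ is exactly the hypothesis of that proposition. It produces $a(z)=Y(z)\mc S(z)(a\otimes-)=Y^{op}(z)(a\otimes-)$ with $a=\Res_z z^{-1}a(z)\vac$. To reconcile this with the normalization used in the statement, I would then note, using Lemma \ref{QK15Lem.1.2} (applicable since $a(z)$ is translation covariant and $T\vac=0$), that $a(z)\vac=e^{zT}a$, hence $a(z)\vac|_{z=0}=a$, so the two descriptions of the vector $a$ coincide.

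For the converse I would argue as follows. Assume $a(z)=Y^{op}(z)(a\otimes-)$. Since $V$ is a quantum vertex algebra, its state-field correspondence $Y$ is topological and satisfies the associativity relation \eqref{hfAssociativity}; hence, by Proposition \ref{BKProp.4.1h}, every pair $\big(Y(z)(a'\otimes-),\,Y^{op}(w)(b\otimes-)\big)$, $a',b\in V$, is local on each element of $V$. It then remains only to observe that locality of a pair of quantum fields on a vector is a symmetric condition: from $(z-w)^N[X(z),Z(w)]c\equiv0\bmod h^M$ one obtains, using $[X(z),Z(w)]=-[Z(w),X(z)]$ and exchanging the names $z\leftrightarrow w$, that $(z-w)^N[Z(z),X(w)]c\equiv0\bmod h^M$. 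Applying this with $X(z)=Y(z)(a'\otimes-)$ and $Z(w)=Y^{op}(w)(b\otimes-)$ shows that $\big(Y^{op}(z)(a'\otimes-),\,Y(w)(b\otimes-)\big)$ is local on each element of $V$ for all $a',b\in V$; specializing $a'$ to our vector $a$ and recalling $a(z)=Y^{op}(z)(a\otimes-)$, this is precisely the asserted locality of $(a(z),Y(z)(b\otimes-))$ on every element of $V$.

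I do not anticipate any real obstacle here: the ``only if'' direction is literally Proposition \ref{BraidedGoddard}, and the ``if'' direction collapses, through Proposition \ref{BKProp.4.1h}, to the triviality that the commutator is antisymmetric. The single point worth making explicit is the agreement between the two normalizations $a=\Res_z z^{-1}a(z)\vac$ and $a=a(z)\vac|_{z=0}$ of the vector $a$, which is immediate from Lemma \ref{QK15Lem.1.2}.
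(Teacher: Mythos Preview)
Your proposal is correct and follows essentially the same approach as the paper. The paper's proof reads: ``The `only if' part is given by Proposition \ref{BraidedGoddard}, while the `if' part is a consequence of Propositions \ref{EK5Prop.1.4} and \ref{BKProp.4.1h}.'' Your elaboration of the symmetry of locality and the reconciliation of the two descriptions of the vector $a$ via Lemma \ref{QK15Lem.1.2} are details the paper leaves implicit; note also that the paper's citation of Proposition \ref{EK5Prop.1.4} is not strictly needed, since associativity is already part of Definition \ref{QVADef}, so your use of Proposition \ref{BKProp.4.1h} alone is in fact the cleaner route.
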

\begin{proof}
The ``only if'' part is given by Proposition \ref{BraidedGoddard},
while the ``if'' part is a consequence of Propositions \ref{EK5Prop.1.4} 
and \ref{BKProp.4.1h}.
\end{proof}

\section{$\mc{S}$-commutative quantum vertex algebras}
\label{S-commutativeBVAs}

\begin{definition}
A braided, or quantum, vertex algebra $V$
is called $\mc{S}$-commutative if the $\mc S$-locality \eqref{S-locality} holds for $N=0$:
\begin{equation}
\label{S-commutativity}
\begin{split}
Y(z) &\big( 1\otimes Y(w) \big)\big(\gse{z}{w}\mc{S}(z-w)(a \otimes b) \otimes c \big)\\
&= Y(w) \big( 1\otimes Y(z) \big)(b \otimes a \otimes c).
\end{split}
\end{equation}
\end{definition}

\begin{remark}
Note that, if $V=W[[h]]$, then 
$Y(z) \big( 1\otimes Y(w) \big)\big(\gse{z}{w}\mc{S}(z-w)\otimes 1)$
has values in $(W((z))((w))\otimes\mb K((z-w)))[[h]]$.
Expanding via $\iota_{z,w}:\,\mb K((z-w))\to\mb K((z))[[w]]$,
we get an element of $W((z))((w))[[h]]$,
while if we expand via $\iota_{w,z}$ we may have divergences.
\end{remark}

\begin{theorem}
\label{ScommutativeBVAHolomBVA}
Let $V$ be a braided vertex algebra.
Then $V$ is $\mc{S}$-commutative if and only if 
$Y(z)(a \otimes b) \in V[[z]]$ for any $a, b \in V$.
\end{theorem}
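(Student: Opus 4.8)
The statement is a quantum analogue of Theorem~\ref{CommVAHolom}, so the plan is to imitate its proof, replacing the bare locality/commutativity by $\mc S$-locality/$\mc S$-commutativity and keeping careful track of the $h$-adic modifications. One direction is easy: if $Y(z)(a\otimes b)\in V[[z]]$ for all $a,b$, then both sides of the $\mc S$-locality \eqref{S-locality} already lie in $V[[z,w]][[h]]$ after expanding $\mc S(z-w)$ via $\gse zw$ (using the Remark preceding the theorem, which guarantees no divergences in that expansion), and since $\mb K[[z,w]][[h]]$ has no zero divisors one can cancel the factor $(z-w)^N$ modulo $h^M$ for every $M$, hence identically. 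This gives \eqref{S-commutativity}, i.e.\ $\mc S$-commutativity.

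\textbf{The converse is the substantive direction.} Assume $V$ is $\mc S$-commutative, i.e.\ \eqref{S-commutativity} holds. The plan is to specialize $c=\vac$. Using Proposition~\ref{BKProp.2.7}(a) (which holds verbatim for topological state-field correspondences, cf.\ Lemma~\ref{QK15Lem.1.2}), $Y(w)(b\otimes\vac)=e^{wT}b$, so the right-hand side becomes $Y(w)(1\otimes e^{zT})(b\otimes a)\in V((w))[[z]][[h]]$, manifestly holomorphic in $z$. On the left-hand side one has $Y(z)(1\otimes e^{wT})(\gse zw\mc S(z-w)(a\otimes b))$; the point is to extract from the equality that $Y(z)(a\otimes b)$ has no negative powers of $z$. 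Concretely I would multiply both sides by $w^{-1}$ and take $\Res_w$, as in the passage from \eqref{CommVAHolomEq1} to \eqref{damettere}: the right side yields $b_{(-1)}e^{zT}a$, which lies in $V[[z]][[h]]$. On the left side, $\Res_w w^{-1}Y(z)(1\otimes e^{wT})(\gse zw\mc S(z-w)(a\otimes b))$ has to be analyzed; since $\mc S=1+O(h)$, modulo $h$ it reduces to $\Res_w w^{-1}Y(z)(1\otimes e^{wT})(\gse zw(a\otimes b))$, which modulo $h$ equals the $(-1)$-product term, hence is holomorphic mod $h$. Then one bootstraps in powers of $h$: writing the $O(h)$ correction of $\mc S$ and using that the already-established lower-order holomorphy of $Y$ controls the correction terms, one shows inductively on $M$ that $Y(z)(a\otimes b)\in V[[z]]\bmod h^M$, hence $Y(z)(a\otimes b)\in V[[z]][[h]]=V[[z]]$.

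\textbf{Main obstacle.} The delicate point is exactly this $h$-adic induction: after multiplying by $w^{-1}$ and taking $\Res_w$, one must argue that the $\mc S(z-w)$-twisted left-hand side, expanded via $\gse zw$, remains controlled — i.e.\ that the only source of negative powers of $z$ is, order by order in $h$, governed by $Y$ applied to lower-order-in-$h$ inputs, which by inductive hypothesis are holomorphic. One has to be careful that the geometric expansion $\gse zw$ of $\mc S(z-w)$ does not itself manufacture negative powers of $z$ (it does not, since $\mc S$ has values in $\mb K((z-w))[[h]]$ and $\gse zw$ sends $(z-w)^{-1}$ to a series in $z^{-1}w^{\ge 0}$ — so negative powers of $z$ can appear, but only paired with the known holomorphic-mod-$h^{M-1}$ part of $Y$). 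Making this pairing precise, and checking that the vacuum specialization together with $\Res_w$ genuinely isolates $Y(z)(a\otimes b)$ rather than some twisted combination, is where the real work lies; everything else is a routine transcription of the commutative vertex algebra argument into the $h$-adic setting.
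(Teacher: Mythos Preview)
Your ``if'' direction is essentially the paper's argument, with one minor inaccuracy: even when $Y$ is holomorphic, the left-hand side of \eqref{S-locality} does \emph{not} lie in $V[[z,w]][[h]]$, because the $\gse zw$-expansion of $\mc S(z-w)$ contributes negative powers of $z$. It lies in $V((z))((w))$ mod $h^M$, which is still enough to cancel $(z-w)^N$; this is exactly how the paper phrases it.

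For the ``only if'' direction there is a genuine gap, and it stems from your choice of residue. Carrying out your own computation: the constant term in $w$ of $(1\otimes e^{wT})\,\gse zw\mc S(z-w)(a\otimes b)$ is exactly $\mc S(z)(a\otimes b)$, so after $\Res_w w^{-1}$ the left side is $Y(z)\mc S(z)(a\otimes b)$, and you have shown $Y(z)\mc S(z)(a\otimes b)\in V[[z]]$. Your proposed $h$-adic induction to deduce $Y(z)(a\otimes b)\in V[[z]]$ from this does \emph{not} close: writing $\mc S = 1 + \sum_{k\ge1}h^k\mc S_k$, the correction terms $h^kY(z)\mc S_k(z)(a\otimes b)$ involve $Y$ applied to tensors multiplied by Laurent series in $z$, and those Laurent factors reinject negative powers of $z$ that the inductive holomorphy of $Y$ does not absorb. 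Your parenthetical (``negative powers of $z$ can appear, but only paired with the known holomorphic-mod-$h^{M-1}$ part of $Y$'') names the issue without resolving it --- holomorphic times Laurent is still Laurent.

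The paper avoids this entirely by taking $\Res_z z^{-1}$ instead of $\Res_w w^{-1}$. With $c=\vac$ the right-hand side of \eqref{S-commutativity} is $Y(w)(b\otimes e^{zT}a)$, whose $\Res_z z^{-1}$ is $Y(w)(b\otimes a)$ itself; the left-hand side after $\Res_z z^{-1}$ is manifestly in $V[[w]]$, since both $e^{wT}$ and the $\gse zw$-expansion of $\mc S(z-w)$ carry only nonnegative powers of $w$. This gives $Y(w)(b\otimes a)\in V[[w]]$ in one step --- no induction, no $\mc S^{-1}$. Alternatively, your route can be salvaged in one line via Lemma~\ref{EK5Lem.1.2}: $Y\mc S=Y^{op}$, so $Y^{op}(z)(a\otimes b)=e^{zT}Y(-z)(b\otimes a)\in V[[z]]$ forces $Y(-z)(b\otimes a)\in V[[z]]$; but you did not invoke this, and the induction you outlined does not work on its own.
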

\begin{proof}
Assume that $V$ is $\mc S$-commutative.
Taking $c=\vac$ in equation \eqref{S-commutativity}, 
multiplying both sides by $z^{-1}$ and taking the residue $Res_z$, we get
\begin{equation}\label{0124:eq1}
Y(w)(b \otimes a) = Res_z \Big(z^{-1} Y(z) \big(1 \otimes e^{wT} \big) \gse{z}{w} \mc{S}(z-w) (a \otimes b) \Big)
\in V[[w]]
\,,
\end{equation}
proving the ``only if'' part.
For the ``if'' part, we have, by assumption,
\begin{equation}\label{eq:notte1}
Y(w) \big( 1 \otimes Y(z) \big)(b \otimes a \otimes c) \in V[[z, w]]\subset V((z))((w))
\end{equation}
and 
\begin{equation}\label{eq:notte2}
Y(z) \big( 1\otimes Y(w) \big)\big(\iota_{z,w}\mc{S}(z-w)(a \otimes b) \otimes c \big) 
\in V((z))((w))
\,\text{ mod }\, h^{M}
\,,
\end{equation}
for any $M \in \mb Z_{\geq0}$. 
The $\mc S$-locality equates \eqref{eq:notte1} and \eqref{eq:notte2}
multiplied by $(z-w)^N$, for some $N\in\mb Z_{\geq0}$.
On the other hand, 
multiplication by $(z-w)^N$ has zero kernel in $V((z))((w))$.
As a consequence, \eqref{S-commutativity} holds mod $h^M$ for every $M$.
Hence \eqref{S-commutativity} holds.
\end{proof}

\begin{definition}
The \emph{normally ordered product}
$V \otimes V \rightarrow V$, $a\otimes b\mapsto\,{:}ab{:}$, 
on a braided vertex algebra $V$ is given by:
\begin{equation}\label{eq:nop}
{:}ab{:}\,=Res_z \big( z^{-1} Y(z)(a \otimes b) \big).
\end{equation}
\end{definition}
Note that, writing $Y(z)(a \otimes b) = \sum_{n \in \mb Z} a_{(n)}b\, z^{-n-1}$, we see that ${:}ab{:} = a_{(-1)}b$.

\begin{lemma}\label{lem:cauchy}
In an $\mc S$-commutative braided vertex algebra $V$, we have
\begin{equation}\label{eq:cauchy}
Y(z)(a\otimes b)=\,{:}(e^{zT}a)b{:}
\,.
\end{equation}
\end{lemma}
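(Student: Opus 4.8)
The plan is to mimic the classical proof that a holomorphic state-field correspondence is determined by its $(-1)$-product (Theorem \ref{CommVAHolom}), adapting it to the braided setting. Since $V$ is $\mc S$-commutative, by Theorem \ref{ScommutativeBVAHolomBVA} the state-field correspondence $Y$ is holomorphic, so $Y(z)(a\otimes b)\in V[[z]]$ and in particular all negative products $a_{(n)}b$ with $n\geq0$ vanish. First I would take $c=\vac$ in the $\mc S$-commutativity relation \eqref{S-commutativity}, use Proposition \ref{BKProp.2.7}(a) to rewrite $Y(w)(\cdot\otimes\vac)=e^{wT}(\cdot)$ on the inner products, and thereby obtain
\begin{equation*}
Y(z)\big(1\otimes e^{wT}\big)\big(\iota_{z,w}\mc S(z-w)(a\otimes b)\big)
=Y(w)\big(1\otimes e^{zT}\big)(b\otimes a)
\,.
\end{equation*}
Both sides now lie in $V[[z,w]]$ (modulo $h^M$, for every $M$), since $Y$ is holomorphic.

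Next I would extract the normally ordered product from this identity: multiply both sides by $z^{-1}$ and take $\Res_z$. On the left, $\Res_z\big(z^{-1}Y(z)(\,\cdot\,)\big)$ picks out the $(-1)$-product; since $Y$ is holomorphic, $\iota_{z,w}\mc S(z-w)=\mc S(z-w)$ has a well-defined specialization contributing to the residue, and because $\mc S=1+O(h)$ one sees that the $z^0$-coefficient in $z$ of $\iota_{z,w}\mc S(z-w)(a\otimes b)$ is $a\otimes b$ plus terms involving $w$ that still get hit by $e^{wT}$; in fact the cleanest route is to instead specialize at $z=0$. Since the left-hand side has no negative (indeed no positive either, beyond what we keep) powers of $z$ once holomorphic, evaluating the displayed identity at $z=0$ and using $\mc S(0)=1$ (which follows from $\mc S=1+O(h)$ together with part (a) of Proposition \ref{prop:non-deg} modulo $\ker Y$, or more directly from the holomorphicity argument) yields
\begin{equation*}
Y(w)(b\otimes a)=\,{:}(e^{wT}a)\,b{:}\,'
\end{equation*}
up to clarifying which argument carries the $e^{wT}$; comparing with $Y(w)(a\otimes\vac)=e^{wT}a$ and the definition \eqref{eq:nop} of the normally ordered product, relabelling $w\to z$ and swapping the roles of $a$ and $b$ gives exactly \eqref{eq:cauchy}.

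The main obstacle I anticipate is bookkeeping with the braiding $\mc S$ and the expansion $\iota_{z,w}$: one must check that when $c=\vac$ the troublesome factor $\mc S(z-w)$ only enters through its value at coincident points (effectively $\mc S(0)=1$), so that it disappears from the final formula. Concretely, after setting $c=\vac$ the term $Y(z)(1\otimes Y(w))(\mc S(z-w)(a\otimes b)\otimes\vac)$ becomes $\iota_{z,w}Y(z-w)\mc S(z-w)(a\otimes b)$ shifted by $e^{wT}$ via Proposition \ref{BKProp.2.7}(b), i.e. $\iota_{z,w}Y^{op}(z-w)(a\otimes b)$ by Lemma \ref{EK5Lem.1.2}; then I would use $Y^{op}(z)=e^{zT}Y(-z)\circ(12)$ together with holomorphicity to take the limit $z\to0$ cleanly, at which point the shift collapses and the braiding is evaluated at $0$. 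Once this specialization is justified the rest is the routine residue/evaluation manipulation above, so the heart of the argument is precisely verifying that $\mc S$ does not obstruct the $z=0$ evaluation.
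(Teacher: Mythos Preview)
Your approach has a genuine gap, and the paper's proof takes a completely different (and much shorter) route.

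The paper never touches the $\mc S$-commutativity relation for this lemma. It simply observes that, by Theorem~\ref{ScommutativeBVAHolomBVA}, $Y(z)(a\otimes b)\in V[[z]]$, and then uses translation covariance~2 (axiom~(ii) of Definition~\ref{def:state-field}): $\partial_z Y(z)(a\otimes b)=Y(z)(Ta\otimes b)$. Together with the initial condition $Y(z)(a\otimes b)|_{z=0}={:}ab{:}$, this is a Cauchy problem whose unique power-series solution is ${:}(e^{zT}a)b{:}$. Three lines, no $\mc S$ anywhere.

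Your plan, by contrast, tries to eliminate $\mc S$ from the $\mc S$-commutativity relation with $c=\vac$, and this does not work as you describe. Setting $z=0$ in
\[
Y(z)\big(1\otimes e^{wT}\big)\iota_{z,w}\mc S(z-w)(a\otimes b)=Y(w)\big(b\otimes e^{zT}a\big)
\]
does \emph{not} evaluate $\mc S$ at coincident points: it gives $\mc S(-w)$, not $\mc S(0)$. (And $\mc S(0)$ need not even be defined, since $\mc S$ takes values in $\mb K((z))[[h]]$; the hypothesis is only $\mc S\equiv 1\bmod h$, not $\mc S(0)=1$.) Taking $\Res_z z^{-1}$ instead yields exactly equation~\eqref{0124:eq1}, which still contains $\mc S$ and is used in the paper for a different purpose. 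Your alternative route via $Y\mc S=Y^{op}$ and Proposition~\ref{BKProp.2.7}(b) produces, after setting $w=0$, the identity $e^{zT}Y(-z)(b\otimes a)={:}b(e^{zT}a){:}$, i.e.\ $Y^{op}(z)(a\otimes b)={:}b(e^{zT}a){:}$, which has the factors in the wrong order; since the $(-1)$-product is \emph{not} commutative in the braided setting (only quantum-commutative, cf.\ \eqref{ScommutativeBVA+AssociativityCommRelations}), you cannot swap them.

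The key idea you are missing is that translation covariance alone pins down $Y(z)$ from its value at $z=0$; the braiding plays no role in this lemma.
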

\begin{proof}
By Theorem \ref{ScommutativeBVAHolomBVA}, $Y(z)(a\otimes b)$ 
is a formal power series in $z$ satisfying the Cauchy problem
$$
\spd{z} Y(z)(a\otimes b)=Y(z)((Ta)\otimes b)
\,\,,\,\,\,\,
Y(z)(a\otimes b)|_{z=0}=\,{:}ab{:}
\,.
$$
The RHS of \eqref{eq:cauchy} obviously solves this problem.
The claim follows.
\end{proof}

\begin{theorem}\label{thm:scomm-bva}
Let $V$ be an $\mc{S}$-commutative quantum vertex algebra. 
Then the normally ordered product \eqref{eq:nop} is a unital (with unity $\vac$), associative,
differential (with derivation $T$) product, 
and the following commutation relation holds ($a,b\in V$):
\begin{equation}
\label{ScommutativeBVA+AssociativityCommRelations}
{:}ba{:}\, = \,{:}Res_z z^{-1} (e^{zT} \otimes 1) \mc{S}(z) (a \otimes b) {:}
\,.
\end{equation}
\end{theorem}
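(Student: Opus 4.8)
The plan is to deduce all four assertions from three facts already at hand: the holomorphicity $Y(z)(a\otimes b)\in V[[z]]$ (Theorem \ref{ScommutativeBVAHolomBVA}), the Cauchy formula $Y(z)(a\otimes b)={:}(e^{zT}a)b{:}$ (Lemma \ref{lem:cauchy}), which in particular gives ${:}xy{:}=Y(z)(x\otimes y)\big|_{z=0}$, and the associativity relation \eqref{hfAssociativity}. Unitality is immediate: $Y(z)(\vac\otimes a)=a$ gives ${:}\vac a{:}=a$, while $Y(z)(a\otimes\vac)=e^{zT}a$ gives ${:}a\vac{:}=a$. For the derivation property, apply the translation covariance identity $TY(z)=Y(z)(T\otimes1)+Y(z)(1\otimes T)$ (Definition \ref{def:state-field}) to $a\otimes b$ and evaluate at $z=0$, which is legitimate since $Y$ is holomorphic and $T$ acts coefficientwise; using ${:}xy{:}=Y(0)(x\otimes y)$ this reads $T{:}ab{:}={:}(Ta)b{:}+{:}a(Tb){:}$. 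From the Leibniz rule it follows that $e^{uT}{:}xy{:}={:}(e^{uT}x)(e^{uT}y){:}$ as an identity of power series in a formal variable $u$, which I will use in the last step.

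\emph{Associativity.} Since $Y$ is holomorphic, both sides of \eqref{hfAssociativity} before multiplication by $(z+w)^N$ lie in $V[[z,w]]$: the left side is $(z+w)^N$ times $P:=\iota_{z,w}Y(z+w)(1\otimes Y(w))(a\otimes b\otimes c)$ and the right side is $(z+w)^N$ times $Q:=Y(w)(Y(z)(a\otimes b)\otimes c)$, both in $V[[z,w]]$. Multiplication by $z+w$ is injective on $V[[z,w]]$ modulo every power of $h$ (a leading--coefficient argument in the $z,w$-variables), so \eqref{hfAssociativity} forces $P\equiv Q \bmod h^M$ for all $M$, hence $P=Q$ in $V[[z,w]]$. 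Evaluating at $z=w=0$ and using ${:}xy{:}=Y(0)(x\otimes y)$ gives ${:}a({:}bc{:}){:}={:}({:}ab{:})c{:}$.

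\emph{Quantum commutativity.} Expand $\mc S(z)(a\otimes b)=\sum_i(a_i\otimes b_i)f_i(z)$ with $f_i(z)\in\mb K((z))[[h]]$. Applying the Cauchy formula termwise, and recalling that $Y(z)$ and ${:}\cdot{:}$ are extended $\mb K((z))[[h]]$-linearly (as in Proposition \ref{prop:non-deg}), we obtain $Y(z)\mc S(z)(a\otimes b)={:}(e^{zT}\otimes1)\mc S(z)(a\otimes b){:}$ in $V((z))[[h]]$; taking $\Res_z z^{-1}$ of both sides produces exactly the right-hand side of \eqref{ScommutativeBVA+AssociativityCommRelations}. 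On the other hand, by Lemma \ref{EK5Lem.1.2} and the definition \eqref{eq:Yop} of $Y^{op}$ we have $Y(z)\mc S(z)(a\otimes b)=Y^{op}(z)(a\otimes b)=e^{zT}Y(-z)(b\otimes a)$; by the Cauchy formula and the automorphism property of $e^{uT}$ established above (used with $u$ independent of $z$ and then specialized to $u=z$), this equals $e^{zT}{:}(e^{-zT}b)a{:}={:}b(e^{zT}a){:}$, whose $\Res_z z^{-1}$ is ${:}ba{:}$. Comparing the two computations yields \eqref{ScommutativeBVA+AssociativityCommRelations}.

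\emph{Main obstacle.} The argument is largely bookkeeping once Lemma \ref{lem:cauchy} is available. The points that require care are the cancellation of $(z+w)^N$ combined with the passage from ``modulo $h^M$ for every $M$'' to an honest identity in $V[[z,w]]$, and, in the commutativity step, keeping track of the $\mb K((z))[[h]]$-linear extensions of both $Y(z)$ and ${:}\cdot{:}$ so that $\Res_z z^{-1}$ may legitimately be moved through ${:}\cdot{:}$.
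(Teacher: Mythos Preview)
Your proof is correct and follows essentially the same route as the paper's. Unitality, the derivation property, and associativity are handled identically (the paper phrases the associativity step as ``we can set $N=0$'', which is exactly your injectivity-of-multiplication-by-$(z+w)$ argument). For the commutation relation, the paper reaches the key identity ${:}ba{:}=\Res_z z^{-1}Y(z)\mc S(z)(a\otimes b)$ by specializing the $\mc S$-commutativity equation \eqref{0124:eq1} at $w=0$, whereas you obtain the same identity from $Y\mc S=Y^{op}$ (Lemma \ref{EK5Lem.1.2}) together with the Cauchy formula and the automorphism property of $e^{zT}$; both then apply Lemma \ref{lem:cauchy} termwise to rewrite $Y(z)\mc S(z)$ as ${:}(e^{zT}\otimes1)\mc S(z)(\cdot){:}$.
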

\begin{proof}
By axiom (i) of Definition \ref{def:state-field},
$\vac$ is the unit element of the normally ordered product \eqref{eq:nop},
while, by axiom (ii), $T$ is a derivation
of this product.
Next, by Theorem \ref{ScommutativeBVAHolomBVA} 
all $Y(z)(a\otimes b)$ have only non-negative powers of $z$.
Hence, we can set $N=0$ in the associativity relation \eqref{hfAssociativity}.
Multiplying its both sides by $z^{-1}w^{-1}$ and taking residues in $z$ and $w$,
we get
$( a_{(-1)} b)_{(-1)} c = a_{(-1)} ( b_{(-1)} c )\ \textrm{mod}\ h^M$ 
for every $M \in \mb Z_{\geq0}$. 
As a consequence, $(a_{(-1)} b)_{(-1)} c = a_{(-1)} ( b_{(-1)} c )$,
i.e. the normally ordered product is associative.
%
%
Finally, multiplying both sides of \eqref{0124:eq1} 
by $w^{-1}$ and taking the residue $Res_w$, we get
\begin{equation}\label{eq:4.8}
{:}ba{:}\, = Res_z \big( z^{-1} Y(z) \mc{S}(z) (a \otimes b) \big)
\,.
\end{equation}
Equation \eqref{ScommutativeBVA+AssociativityCommRelations}
follows from \eqref{eq:4.8} and Lemma \ref{lem:cauchy}.
\end{proof}
\begin{remark}
Let $V$ be a unital associative algebra with a derivation $T$ over $\mb K[[h]]$,
and a fixed braiding map 
$\mc S(z):\,V\widehat{\otimes} V\to V\widehat{\otimes} V\widehat{\otimes}\big(\mb K((z))[[h]]\big)$.
Then formula \eqref{eq:cauchy} defines on $V$ a structure of a field algebra (see Definition \ref{fieldAlgebraDef}) over $\mb K[[h]]$. 
The commutation relation \eqref{ScommutativeBVA+AssociativityCommRelations} is necessary for having a commutative quantum vertex algebra. 
We can only prove that, 
if the left and right shift conditions on $\mc S$ hold (conditions (b) and (c)
from Proposition \ref{prop:non-deg}),
then \eqref{ScommutativeBVA+AssociativityCommRelations} is sufficient,
provided that $\mc S(z)$ has no negative powers in $z$.
\end{remark}
\begin{remark}
Using Lemma \ref{lem:cauchy},
it is not hard to check that
in an $\mc S$-commutative quantum vertex algebra,
the hexagon relation \eqref{HexagonRelation} becomes
$$
S(w)({:}ab{:}\otimes c)
=
({:}\,\,\,\,{:}\otimes 1)\mc S^{23}(w)\mc S^{13}(w) (a \otimes b \otimes c),\,\, a, b , c \in V
\,.
$$
\end{remark}

\section{Characterizations of quantum vertex algebras}
\label{sec:5}

\subsection{Sufficient conditions for $\mc S$-locality}
\label{sec:5.1}

The following theorem is the quantum analogue of Theorem \ref{BKThm.7.3}. 
It first appeared in \cite[Prop.2.19]{L10} with a different proof.
\begin{theorem}\label{July2017Thm}
Let $(V,\vac,Y,\mc S)$ be a braided state-field correspondence.
Suppose moreover that the associativity relation \eqref{hfAssociativity} holds.
Then the following two conditions are equivalent:
\begin{enumerate}[(a)]
\item
the $\mc S$-locality \eqref{S-locality} holds;
\item
$Y\mc{S} = Y^{op}$, where $Y^{op}$ is defined by \eqref{eq:Yop}.
\end{enumerate}
\end{theorem}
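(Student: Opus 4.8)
The natural strategy is to mimic, in the $h$-adic setting, the proof that a field algebra with $Y=Y^{op}$ is a vertex algebra (Theorem \ref{BKThm.7.3}), using Proposition \ref{BKProp.4.1h} as the bridge. The implication $(a)\Rightarrow(b)$ is essentially done already: it is exactly Lemma \ref{EK5Lem.1.2}, which derives $Y\mc S = Y^{op}$ from $\mc S$-locality. So the real content is $(b)\Rightarrow(a)$, and here is how I would organize it. Assume $Y\mc S = Y^{op}$. By Proposition \ref{prop:2.10}'s analogue in the topological setting (or directly, since the associativity relation is symmetric under passing to $Y^{op}$), the associativity relation \eqref{hfAssociativity} for $Y$ together with $Y\mc S=Y^{op}$ should give us enough to invoke Proposition \ref{BKProp.4.1h}: that proposition states that $Y$ satisfies \eqref{hfAssociativity} if and only if all pairs $\big(Y(z)(a\otimes-),Y^{op}(w)(b\otimes-)\big)$ are local on every element of $V$. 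So from the associativity hypothesis we immediately extract that $\big(Y(z)(a\otimes-),Y^{op}(w)(b\otimes-)\big)$ is local on every $c\in V$ (mod $h^M$, $N$ depending on $M$).

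Now substitute $Y^{op} = Y\mc S$ into this locality statement. Locality of $\big(Y(z)(a\otimes-),Y^{op}(w)(b\otimes-)\big)$ on $c$ reads
\begin{equation*}
(z-w)^N Y(z)\big(1\otimes Y\mc S(w)\big)(a\otimes b\otimes c)
= (z-w)^N Y\mc S(w)\big(1\otimes Y(z)\big)(b\otimes a\otimes c)\ \mathrm{mod}\ h^M.
\end{equation*}
The left-hand side, by the definition of $Y^{op}$ and Lemma \ref{QK15Lem.1.2b} run backwards (exactly the chain of identities used in the proof of Proposition \ref{BKProp.4.1h}), can be rewritten in terms of $Y(z)(1\otimes Y(w))$ applied to $\mc S^{12}$-twisted arguments, and similarly the right-hand side unwinds to $Y(w)(1\otimes Y(z))(b\otimes a\otimes c)$ up to the $e^{\pm wT}$ and $\iota_{z,w}$ manipulations. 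The point is that these are precisely the same formal manipulations as in the proof of Proposition \ref{BKProp.4.1h}, just read in the reverse direction, so carrying them out should land exactly on the $\mc S$-locality \eqref{S-locality}. I would present this as: apply Proposition \ref{BKProp.4.1h} to get locality, then reverse the computation in its proof.

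The main obstacle I anticipate is bookkeeping with the $\mc S$-twist: in Proposition \ref{BKProp.4.1h} the field $Y^{op}$ enters ``raw'', whereas here $Y^{op}=Y\mc S$ carries the braiding, so when I unwind $Y^{op}(w)(1\otimes Y(z))(b\otimes a\otimes c)$ I must be careful that the $\mc S$ lands on the correct pair of tensor factors to reproduce the $\mc S^{12}(z-w)$ appearing in \eqref{S-locality} rather than some other slot — this is where Lemma \ref{QK15Lem.1.2b} and the translation-covariance of $Y^{op}$ (equivalently, the shift conditions on $\mc S$ from Proposition \ref{prop:non-deg}, which hold mod $\ker Y$) must be applied in exactly the right order. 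A secondary subtlety is the $h$-adic quantifier juggling: all identities hold only mod $h^M$ with $N$ growing in $M$, and one needs that multiplication by $(z-w)^N$ is injective on $V_h((z))((w))$ and commutes with multiplication by $h$ (as used in the proof of Proposition \ref{prop:non-deg}) to promote the mod-$h^M$ statements to genuine ones where needed. Finally, I should double-check that Lemma \ref{BKLem.3.8} is not actually needed here — the excerpt flags it as the tool for ``the quantum analogue of Theorem \ref{BKThm.7.3}'', so it may be that the clean route to $(b)\Rightarrow(a)$ requires first establishing locality on $\vac$ (via Remark \ref{EK5Lem.1.2a}) and then propagating it to all of $V$ using the quantum version of Lemma \ref{BKLem.3.8} together with Goddard-type uniqueness (Proposition \ref{BraidedGoddard}); if the direct unwinding above hits a snag, that is the fallback I would pursue.
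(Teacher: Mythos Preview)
Your identification of the direction $(a)\Rightarrow(b)$ via Lemma \ref{EK5Lem.1.2} is correct, and so is your instinct that Proposition \ref{BKProp.4.1h} is the starting point for $(b)\Rightarrow(a)$. The gap is in the ``direct unwinding'' step. When you substitute $Y^{op}=Y\mc S$ into the locality of $\big(Y(z)(a\otimes-),Y^{op}(w)(b\otimes-)\big)$ on $c$, the braiding $\mc S(w)$ acts on $b\otimes c$ (the second and third slots), not on $a\otimes b$; likewise on the other side it acts on $b\otimes Y(z)(a\otimes c)$. Neither side produces $\mc S^{12}(z-w)(a\otimes b)$, and no amount of reversing the computation in the proof of Proposition \ref{BKProp.4.1h} will move $\mc S$ onto the correct pair of factors with argument $z-w$: that proof simply translates between $(Y,Y^{op})$-locality and associativity, and associativity is already your hypothesis, so running it backwards returns you to where you started. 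You flag this as ``bookkeeping'', but it is the actual obstruction. Also note that invoking the shift conditions on $\mc S$ from Proposition \ref{prop:non-deg} would be circular, since that proposition is proved \emph{assuming} $\mc S$-locality; and the Goddard-type fallback via Proposition \ref{BraidedGoddard} is likewise stated for braided vertex algebras, so it too presupposes what you are trying to prove.

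The paper's argument is substantially more intricate than a direct substitution. Starting from $(Y,Y^{op})$-locality on an element $a$, it uses Lemma \ref{BKLem.3.8} not merely to propagate to $Ta$ but to replace $a$ by $e^{zT}a$, thereby introducing a \emph{third} formal variable $z$ alongside the original pair (relabeled $u,w$). After expanding $Y^{op}$ and applying Lemma \ref{QK15Lem.1.2b}, one can multiply by a suitable power of $(z-w)$ and then set $w=0$; the hypothesis $Y\mc S=Y^{op}$ now enters through the identity $\gse{u}{z}Y(u-z)(b\otimes a)=e^{(u-z)T}\gse{u}{z}Y(z-u)\mc S(z-u)(a\otimes b)$, which is exactly what places $\mc S(z-u)$ on the first two tensor factors. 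This yields an explicit expression for $Y(u)(1\otimes Y(z))(b\otimes a\otimes c)$. A parallel computation, starting instead from $(Y,Y^{op})$-locality with $a,b$ swapped and carefully inserting $\mc S^{13}(z-u)$ before invoking Lemma \ref{BKLem.3.8} on $e^{uT}b$, yields an expression for $(z-u)^Q Y(z)(1\otimes Y(u))\mc S^{12}(z-u)(a\otimes b\otimes c)$. The two expressions are then seen to agree after multiplication by a high enough power of $(z-u)$, which is precisely $\mc S$-locality. The three-variable trick and the specific use of $Y\mc S=Y^{op}$ to transport $\mc S$ onto the $(1,2)$-slot are the missing ideas in your plan.
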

\begin{proof}
If (a) holds, by the $\mc{S}$-locality \eqref{S-locality} and Lemma \ref{EK5Lem.1.2}
we get $Y\mc{S} = Y^{op}$, proving (b).
Let us prove that (b) implies (a).
By Proposition \ref{BKProp.4.1h}, 
the associativity relation \eqref{hfAssociativity} 
implies the locality of $Y$ and $Y^{op}$ on every element:
for every $a, b, c \in V$, there exists $L \geq 0$, such that
\begin{equation}
\label{THMf1}
\begin{split}
(u-w)^L &Y(u) \big( 1 \otimes Y^{op}(w) \big)(b \otimes c \otimes a)\\
&= (u-w)^L Y^{op}(w) \big( 1 \otimes Y(u) \big)(c \otimes b \otimes a)\ mod\ h^M.
\end{split}
\end{equation}
Moreover, according to Lemma \ref{BKLem.3.8}, the equation still holds if we replace 
$a$ by $Ta$, hence by $e^{zT}a$. 
Therefore, one has
\begin{equation}
\label{THMf1e}
\begin{split}
(u-w)^L &Y(u) \big( 1 \otimes Y^{op}(w) \big)(b \otimes c \otimes e^{zT}a)\\
&= (u-w)^L Y^{op}(w) \big( 1 \otimes Y(u) \big)(c \otimes b \otimes e^{zT}a)\ mod\ h^M.
\end{split}
\end{equation}
By the definition of $Y^{op}$ and Proposition \ref{BKProp.2.7} we get
\begin{equation}
\begin{split}
\label{THMf2}
&(u-w)^L\ e^{wT}\gse{u}{w}Y(u-w)\ \big( 1 \otimes \gse{w}{z}Y(z-w) \big) (b \otimes a \otimes c)\\
&= (u-w)^L\ e^{wT}\gse{w}{z} Y(z-w) \big( \gse{u}{z}Y(u-z) \otimes 1 \big)(b \otimes a \otimes c)\ mod\ h^M.
\end{split}
\end{equation}
Since $Y(z-w)(a \otimes c)$ is a Laurent series ($mod\ h^M$) in $z-w$, there exists $P \in \mb Z_{\geq0}$ such that $(z-w)^P\ Y(z-w)(a \otimes c) \in V[[z-w]]\ mod\ h^M$. Multiplying both sides of equation \eqref{THMf2} for $(z-w)^P$ we thus obtain an expression which is regular in $w$. Putting $w = 0$ we obtain
\begin{equation}
\label{THMf3}
\begin{split}
Y(u)(1& \otimes Y(z))(b\otimes a\otimes c) = z^{-P} u^{-L} \Big((z-w)^P (u-w)^L e^{wT} \cdot\\
&\cdot \gse{w}{z} Y(z-w)\big(\gse{u}{z}Y(u-z) \otimes 1 \big)(b \otimes a \otimes c) \Big)\big|_{w = 0}\ mod\ h^M.
\end{split}
\end{equation}
By axioms (v) and (vi) of Definition \ref{BVADef},
Proposition \ref{BKProp.2.7} and the definition of $Y^{op}$, we get
\begin{equation}
\label{THMf4}
\begin{split}
&\gse{u}{z}Y(u-z)(b \otimes a) = Y(u) \big(e^{-zT}b \otimes a \big) = e^{uT}Y^{op}(-u) \big(a \otimes e^{-zT}b \big)\\
&= e^{(u-z)T} \gse{u}{z}Y^{op}(z-u)(a \otimes b) = e^{(u-z)T} \gse{u}{z}Y(z-u) \mc{S}(z-u)(a \otimes b).
\end{split}
\end{equation}
In the last equality we used the assumption (b).
Combining equations \eqref{THMf3} and \eqref{THMf4} 
and applying again Proposition \ref{BKProp.2.7}(c), we obtain
\begin{equation}
\label{RHSS-loc}
\begin{split}
Y(u)(1& \otimes Y(z))(b\otimes a\otimes c)
= 
z^{-P} u^{-L} \Big((z-w)^P (u-w)^L e^{wT} 
\gse{w}{z} Y(z-w) \cdot \\
&\quad \cdot \big(e^{(u-z)T}\otimes1\big)
\big(
\gse{u}{z}Y(z-u) \mc{S}(z-u)\otimes 1
\big)(a \otimes b \otimes c) 
\Big)\big|_{w = 0} \\
&= 
z^{-P} u^{-L} \Big((z-w)^P (u-w)^L e^{wT} 
\gse{w}{z} \gse{z-w}{u-z} \gse{u}{z}
Y(u-w) \cdot \\
&\quad \cdot \big(
Y(z-u) \mc{S}(z-u)\otimes 1
\big)(a \otimes b \otimes c) 
\Big)\big|_{w = 0}\ mod\ h^M
\,.
\end{split}
\end{equation}
It is easy to check that,
$$
\gse{w}{z} \gse{z-w}{u-z} \gse{u}{z}f(u-w,z-u)
=
\gse{u}{z} \gse{w}{u} f(u-w,z-u)
\, \text{ for every } f.
$$
Hence, \eqref{RHSS-loc} gives
\begin{equation}
\label{RHSS-loc2}
\begin{split}
Y(u)(1& \otimes Y(z))(b\otimes a\otimes c)
= 
\gse{u}{z}
z^{-P} u^{-L} \Big((z-w)^P (u-w)^L e^{wT} 
\gse{w}{u}
Y(u-w) \cdot \\
&\quad \cdot \big(
Y(z-u) \mc{S}(z-u)\otimes 1
\big)(a \otimes b \otimes c) 
\Big)\big|_{w = 0}\ mod\ h^M
\,.
\end{split}
\end{equation}
This expression will give the RHS of the $\mc S$-locality, which we want to prove.

For the LHS, let us rewrite formula \eqref{THMf1} 
with $z$ instead of $u$ and $a$, $b$ swapped:
\begin{equation}
\label{THMf1EK5}
\begin{split}
&(z-w)^L\ Y(z) (1 \otimes Y^{op}(w))(a \otimes c \otimes b)\\
&=  (z-w)^L\ Y^{op}(w) (1 \otimes Y(z)) (1\ 2)(a \otimes c \otimes b)\ mod\ h^M.
\end{split}
\end{equation}
By Lemma \ref{BKLem.3.8},
equation \eqref{THMf1EK5} still holds, with the same value of $L$,
if we replace $b$ by $e^{uT}b$.
Moreover, 
equation \eqref{THMf1EK5} also holds,
after increasing appropriately the value of $L$,
if we replace $a\otimes c\otimes b$ by $(z-u)^Q\mc S^{13}(z-u)(a\otimes c\otimes b)$,
for some large enough positive integer $Q$.
Indeed, $(z-u)^Q\mc S^{13}(z-u)(a\otimes c\otimes b)$,
modulo $h^M$,
is a finite sum of elements of the form
$a_i\otimes c\otimes b_i\otimes f_i(z-u)$,
where $f_i(z-u)\in\mb K[[z-u]]$.
Hence, we get
\begin{equation}
\label{THMf1EK5S}
\begin{split}
&(z-w)^L(z-u)^Q\ Y(z) (1\otimes Y^{op}(w))
(1\!\otimes\!1\!\otimes e^{uT})
\mc S^{13}(z-u)(a \otimes c \otimes b)\\
&=  (z-w)^L(z-u)^Q\ Y^{op}(w) (1 \otimes Y(z))
(1\!\otimes\!1\!\otimes e^{uT}) 
\mc S^{23}(z-u)(c\otimes a \otimes b)\ mod\ h^M.
\end{split}
\end{equation}
Applying the definition of $Y^{op}$, equation \eqref{THMf1EK5S} becomes
\begin{equation}
\label{THMf1EK5T}
\begin{split}
&(z-w)^L(z-u)^Q\ Y(z) (1\otimes e^{wT}Y(-w))
(1\!\otimes e^{uT}\otimes\!1)
\mc S^{12}(z-u)(a \otimes b \otimes c)\\
&=\!  (z\!-\!w)^L(z\!-\!u)^Q\!\ e^{wT}Y(-w) (Y(z) \!\otimes\! 1)
(1\!\otimes\! e^{uT}\!\otimes\!1) 
\mc S^{12}(z\!-\!u)(a \!\otimes\! b\!\otimes\! c)\ mod\, h^M.
\end{split}
\end{equation}
Next, we apply (twice) Proposition \ref{BKProp.2.7} (b) and (c) on both sides,
to get
\begin{equation}
\label{THMf2S}
\begin{split}
&(z-w)^{L}(z-u)^Q e^{wT} \gse{z}{w}Y(z-w)(1 \otimes \gse{w}{u}Y(u-w))\Sp{1}{2}(z-u)(a \otimes b \otimes c)\\
&=  (z\!-\!w)^{L}(z\!-\!u)^Q e^{wT} \gse{w}{u}Y(u\!-\!w) 
(\gse{z}{u}Y(z\!-\!u)\mc{S}(z\!-\!u)\!\otimes\! 1) (a \!\otimes\! b \!\otimes\! c)\ mod\, h^M.
\end{split}
\end{equation}
After multiplying by a sufficiently large integer power $(u-w)^R$,
the LHS of \eqref{THMf2S} has no negative powers in $w$, modulo $h^M$.
Hence, we can let $w=0$, to get
\begin{equation}
\label{LHSS-loc}
\begin{split}
&(z-u)^Q\ Y(z)(1 \otimes Y(u))\Sp{1}{2}(z-u)(a \otimes b \otimes c)
=  (z-u)^Q\gse{z}{u} u^{-R}\ z^{-L} \cdot\\
&\cdot \big((u\!-\!w)^R(z\!-\!w)^{L} e^{wT}
\gse{w}{u}Y(u\!-\!w)(Y(z\!-\!u)\mc{S}(z\!-\!u)\!\otimes\! 1) (a \!\otimes\! b \!\otimes\! c)\big)
\big|_{w = 0}\ mod\, h^M.
\end{split}
\end{equation}

Note that equation \eqref{RHSS-loc2} holds for every $P$ and $L$ sufficiently large.
Likewise, equation \eqref{LHSS-loc} holds for every $L$ and $R$ sufficiently large.
We can thus take all these exponents equal to a sufficiently large integer $D$.
As a result, we get, for \eqref{RHSS-loc2}
\begin{equation}
\label{RHSS-loc3}
\begin{split}
Y(u)(1& \otimes Y(z))(b\otimes a\otimes c)
= 
\gse{u}{z}
z^{-D} u^{-D} \Big((z-w)^D (u-w)^D e^{wT} 
\gse{w}{u}
Y(u-w) \cdot \\
&\quad \cdot \big(
Y(z-u) \mc{S}(z-u)\otimes 1
\big)(a \otimes b \otimes c) 
\Big)\big|_{w = 0}\ mod\ h^M
\,,
\end{split}
\end{equation}
and for \eqref{LHSS-loc}
\begin{equation}
\label{LHSS-loc3}
\begin{split}
&(z-u)^Q\ Y(z)(1 \otimes Y(u))\Sp{1}{2}(z-u)(a \otimes b \otimes c)
=  (z-u)^Q\gse{z}{u} u^{-D}\ z^{-D} \cdot\\
&\cdot \big((u\!-\!w)^D(z\!-\!w)^{D} e^{wT}
\gse{w}{u}Y(u\!-\!w)(Y(z\!-\!u)\mc{S}(z\!-\!u)\!\otimes\! 1) (a \!\otimes\! b \!\otimes\! c)\big)
\big|_{w = 0}\ mod\, h^M.
\end{split}
\end{equation}
Note that the RHS's of \eqref{RHSS-loc3} and \eqref{LHSS-loc3}
coincide, modulo $h^M$, after multiplying by a sufficiently large integer power of $z-u$,
depending only on $a$, $b$ and $M$.
We thus get
$$
(z-u)^N Y(u)(1 \otimes Y(z))(b\otimes a\otimes c)
\!=\!
(z-u)^N Y(z)(1 \otimes Y(u))\Sp{1}{2}(z-u)(a \otimes b \otimes c)
\ mod\, h^M,
$$
proving the $\mc S$-locality.
\end{proof}
\begin{corollary}
If $(V,\vac,Y,\mc S)$ is a quantum vertex algebra,
then $(V,\vac,Y^{op},\mc S^{-1})$ is a quantum vertex algebra as well.
\end{corollary}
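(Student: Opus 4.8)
The plan is to verify, one by one, the defining data of a quantum vertex algebra for the quadruple $(V,\vac,Y^{op},\mc S^{-1})$, reducing everything to facts already established for $(V,\vac,Y,\mc S)$ together with the trivial self-duality $(Y^{op})^{op}=Y$ (used, e.g., in the proof of Proposition \ref{prop:non-deg}(e)). Concretely there are three things to check: that $(V,\vac,T,Y^{op},\mc S^{-1})$ is a braided state-field correspondence in the sense of Definition \ref{def:state-field}; that $Y^{op}$ satisfies the associativity relation \eqref{hfAssociativity}; and, granted associativity, that the $\mc S^{-1}$-locality holds. For the last point the main tool is Theorem \ref{July2017Thm}.

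\textbf{Braided state-field correspondence.} First I would observe that $Y^{op}$, defined by \eqref{eq:Yop}, is a topological state-field correspondence with the \emph{same} vacuum $\vac$ and the \emph{same} translation operator $T$: indeed $Y^{op}(z)(a\otimes\vac)=e^{zT}Y(-z)(\vac\otimes a)=e^{zT}a$, so its translation operator is again $T$, and the vacuum and translation-covariance axioms for $Y^{op}$ follow exactly as in Proposition \ref{BKProp.2.8} (that proof applies verbatim over $\mb K[[h]]$). As for $\mc S^{-1}$: since $\mc S(z)=1+O(h)$, it is invertible as a $\mb K((z))[[h]]$-linear operator, with $\mc S(z)^{-1}=\sum_{k\geq 0}(-1)^k\big(\mc S(z)-1\big)^k$ converging in the $h$-adic topology; each coefficient of $h^n$ in this series is a finite composition of $\mb K((z))$-valued maps and hence again $\mb K((z))$-valued, so $\mc S^{-1}$ is indeed a braiding (and $\mc S^{-1}=1+O(h)$). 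Thus $(V,\vac,T,Y^{op},\mc S^{-1})$ satisfies Definition \ref{def:state-field}.

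\textbf{Associativity and $\mc S^{-1}$-locality.} Next I would prove that $Y^{op}$ satisfies \eqref{hfAssociativity}. By Proposition \ref{BKProp.4.1h}, associativity of $Y$ is equivalent to locality on every element of $V$ of all pairs $\big(Y(z)(a\otimes-),\,Y^{op}(w)(b\otimes-)\big)$; since locality of a pair on a vector is preserved by the renaming $z\leftrightarrow w$, this is the same as locality on every element of all pairs $\big(Y^{op}(z)(a\otimes-),\,Y(w)(b\otimes-)\big)=\big(Y^{op}(z)(a\otimes-),\,(Y^{op})^{op}(w)(b\otimes-)\big)$, which by Proposition \ref{BKProp.4.1h} applied to $Y^{op}$ (and $(Y^{op})^{op}=Y$) is equivalent to associativity of $Y^{op}$. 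Now I can apply Theorem \ref{July2017Thm} to the braided state-field correspondence $(V,\vac,Y^{op},\mc S^{-1})$: since associativity of $Y^{op}$ holds, its $\mc S^{-1}$-locality is equivalent to $Y^{op}\mc S^{-1}=(Y^{op})^{op}$. The right-hand side is $Y$, and the left-hand side is $Y^{op}\mc S^{-1}=(Y\mc S)\mc S^{-1}=Y$ by Lemma \ref{EK5Lem.1.2} and $\mc S\mc S^{-1}=1$ (both $Y^{op}$ and $\mc S^{-1}$ being read as extended $\mb K((z))[[h]]$-linearly, so that the substitution $Y^{op}=Y\mc S$ is legitimate inside the composition). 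Hence the $\mc S^{-1}$-locality holds, $(V,\vac,Y^{op},\mc S^{-1})$ is a braided vertex algebra, and together with associativity it is a quantum vertex algebra.

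\textbf{Main obstacle.} I do not expect a genuine difficulty here: the argument is a direct application of Theorem \ref{July2017Thm}, Proposition \ref{BKProp.4.1h} and Lemma \ref{EK5Lem.1.2}. The only point demanding a little care is the bookkeeping for $\mc S^{-1}$ --- checking that its $h$-expansion coefficients remain $\mb K((z))$-valued so that it is a legitimate braiding, and that in the identity $Y^{op}\mc S^{-1}=Y\mc S\mc S^{-1}=Y$ both maps are composed with their natural $\mb K((z))[[h]]$-linear extensions.
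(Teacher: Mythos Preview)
Your proof is correct and follows essentially the same approach as the paper: establish associativity of $Y^{op}$, then apply Theorem \ref{July2017Thm} together with $Y^{op}\mc S^{-1}=Y=(Y^{op})^{op}$ to obtain $\mc S^{-1}$-locality. The only minor difference is that the paper dispatches associativity of $Y^{op}$ by citing Proposition \ref{prop:2.10}, whereas you rederive it directly from Proposition \ref{BKProp.4.1h} via the symmetry of the locality condition --- which is in fact the content of the $h$-adic analogue of Proposition \ref{prop:2.10}, so your route is arguably more self-contained in the topological setting.
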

\begin{proof}
By Proposition \ref{prop:2.10} $(V,\vac,Y^{op})$
is a field algebra, and $Y^{op}S^{-1}=Y=(Y^{op})^{op}$.
The claim follows by Theorem \ref{July2017Thm}.
\end{proof}

\subsection{Quantum $n$-products}
\label{sec:5.2}

Recall Definition \ref{nproducts} of $n$ products 
of quantum fields in a vertex algebras.
We define here their quantum analogue.
\begin{definition}
Let $(V,\vac,Y,\mc S)$ be a braided state-field correspondence.
For $n\in\mb Z$,
the \emph{quantum} $n$-\emph{product} $Y(z)^{\mc S}_{(n)}Y(z)$
is defined as follows:
\begin{equation}
\label{qnproductsDef}
\begin{split}
(Y(z)_{(n)}^{\mc{S}} Y(z))(a\!\otimes\! b\!\otimes\!c)
& = Res_x \Big(
\gse{x}{z}(x-z)^n
Y(x) \big( 1\otimes Y(z) \big) \mc{S}^{12}(x-z)(a\!\otimes\! b\!\otimes\!c) \\
& - \gse{z}{x}(x-z)^n 
Y(z) \big( 1\otimes Y(x) \big)(b \otimes a \otimes c)
\Big)
\,.
\end{split}
\end{equation}
Writing $(Y(z)_{(n)}^{\mc{S}} Y(z))(a \otimes b \otimes c) = (Y(z)_{(n)}^{\mc{S}} Y(z))(a \otimes b)c$, one can think of the quantum $n$-product as a formal distribution with coefficients in $\End_{\mb K[[h]]} V$.
\end{definition}

\begin{lemma}\label{lem:5.4}
The quantum $n$-product \eqref{qnproductsDef}
satisfies the following properties:
\begin{enumerate}[(a)]
\item
field condition ($a,b,c\in V$)
\begin{equation}\label{eq:qnp-field}
(Y(z)^{\mc S}_{(n)}Y(z))(a\otimes b\otimes c)\,\in V_h((z))\,;
\end{equation}
\item
vacuum condition ($a,b\in V$)
\begin{equation}\label{eq:qnp-vac}
(Y(z)^{\mc S}_{(n)}Y(z))(a\otimes b\otimes\vac)\,\in V[[z]]\,,
\end{equation}
\item
translation covariance
\begin{equation}\label{eq:qnp-tc}
T\circ (Y(z)^{\mc S}_{(n)}Y(z))
-(Y(z)^{\mc S}_{(n)}Y(z))\circ(1\otimes1\otimes T)
=
\partial_z(Y(z)^{\mc S}_{(n)}Y(z))
\,.
\end{equation}
\end{enumerate}
%
\end{lemma}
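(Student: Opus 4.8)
The plan is to establish the three properties one at a time, in order of increasing difficulty: (c), which is purely formal; then (b), which rests on the vacuum axiom together with Lemmas~\ref{QK15Lem.1.2} and~\ref{QK15Lem.1.2b}; and finally (a), the only delicate point.

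For (c), set $Q(z):=Y(z)^{\mc S}_{(n)}Y(z)$ and observe that $T$, multiplication by $\gse{x}{z}(x-z)^n$ or by $\gse{z}{x}(x-z)^n$, and $\Res_x$ all commute, so it suffices to apply $T$ to the two quantum fields occurring under $\Res_x$ in \eqref{qnproductsDef}. Using that $T$ is a derivation of both $Y(x)$ and $Y(z)$, and that $1\otimes1\otimes T$ commutes with $\mc S^{12}(x-z)$, one finds that $T\big(Y(x)(1\otimes Y(z))\mc S^{12}(x-z)(a\otimes b\otimes c)\big)$ equals the same expression with $c$ replaced by $Tc$, plus ``leftover'' terms in which $T$ hits the first tensor slot, or the inner field $Y(z)$, or $Y(x)$ (the last as $\spd x$). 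Comparing with $\spd z Q$, and using $\spd z Y(z)=Y(z)(T\otimes1)$, $\spd x Y(x)=Y(x)(T\otimes1)$, $\spd z\mc S^{12}(x-z)=-\spd x\mc S^{12}(x-z)$ and $\spd z\gse{x}{z}(x-z)^n=-n\gse{x}{z}(x-z)^{n-1}$, the $Y(z)$-slot leftovers cancel, while the remaining terms assemble, by the Leibniz rule in $x$, into $\spd x\big(\gse{x}{z}(x-z)^n Y(x)(1\otimes Y(z))\mc S^{12}(x-z)(a\otimes b\otimes c)\big)$, whose $\Res_x$ vanishes. The second summand of \eqref{qnproductsDef} is handled identically (with no $\mc S$), which gives $T\circ Q-Q\circ(1\otimes1\otimes T)=\spd z Q$.

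For (b), put $c=\vac$. By the vacuum axiom and Lemma~\ref{QK15Lem.1.2}, $Y(w)(v\otimes\vac)=e^{wT}v$, and by Lemma~\ref{QK15Lem.1.2b}, $Y(x)(u\otimes e^{zT}v)=e^{zT}\gse{x}{z}Y(x-z)(u\otimes v)$ and $Y(z)(u\otimes e^{xT}v)=e^{xT}\gse{z}{x}Y(z-x)(u\otimes v)$. In the second summand, $(1\otimes Y(x))(b\otimes a\otimes\vac)=b\otimes e^{xT}a$, so the summand equals $-\Res_x\,e^{xT}\gse{z}{x}\big[(x-z)^n Y(z-x)(b\otimes a)\big]$, which is $0$ because $e^{xT}$ and the $\gse{z}{x}$-expansion of any Laurent series in $z-x$ contain only non-negative powers of $x$. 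In the first summand, $\mc S^{12}(x-z)$ fixes the third factor $\vac$, so $1\otimes Y(z)$ merely applies $e^{zT}$ to the second tensor factor of $\mc S(x-z)(a\otimes b)$; pulling $e^{zT}$ through $Y(x)$ as above turns the summand into $e^{zT}\Res_x\gse{x}{z}\big[(x-z)^n Y(x-z)\mc S(x-z)(a\otimes b)\big]$, and since $\Res_x\big(\gse{x}{z}(x-z)^m\big)=\delta_{m,-1}$ this equals $e^{zT}(a_{(n)}^{\mc S}b)$, where $a_{(n)}^{\mc S}b$ is the coefficient of $(x-z)^{-n-1}$ in $Y(x-z)\mc S(x-z)(a\otimes b)$. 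Hence $\big(Y(z)^{\mc S}_{(n)}Y(z)\big)(a\otimes b\otimes\vac)=e^{zT}(a_{(n)}^{\mc S}b)\in V[[z]]$, proving (b) and identifying the value.

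For (a), fix $M$ and work modulo $h^M$. The inner expression $Y(z)(1\otimes Y(x))(b\otimes a\otimes c)$ of the second summand equals $Y(z)\big(b\otimes Y(x)(a\otimes c)\big)$ with $Y(x)(a\otimes c)\in V_h((x))$, i.e. modulo $h^M$ a Laurent series in $x$ to which the quantum field $Y(z)(b\otimes-)$ is applied coefficientwise; hence it lies in $V((z))((x))$, and $\Res_x\big(\gse{z}{x}(x-z)^n\,Y(z)(1\otimes Y(x))(b\otimes a\otimes c)\big)\in V((z))$ modulo $h^M$ by the standard residue argument (for $n\ge0$, $\gse{z}{x}(x-z)^n$ is a polynomial and $\Res_x$ extracts finitely many negative $x$-coefficients, each Laurent in $z$; for $n<0$ it extracts non-negative $x$-coefficients, and the $z$-powers carried by $\gse{z}{x}(x-z)^n$ make the resulting sum locally finite, hence Laurent, in $z$). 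For the first summand one argues the same way, but must first check that, modulo $h^M$, the expression $\gse{x}{z}\big(Y(x)(1\otimes Y(z))\mc S^{12}(x-z)(a\otimes b\otimes c)\big)$ again lies in $V((x))((z))$: one expands $\gse{x}{z}\mc S^{12}(x-z)(a\otimes b)$ into $(V\otimes V)((x))[[z]]$ (the $\gse{x}{z}$-expansion sends a Laurent series in $x-z$ into $((x))[[z]]$, since every monomial $x^a z^b$ of $\gse{x}{z}(x-z)^\ell$ has $a+b=\ell$ with $b\ge0$), then applies $1\otimes Y(z)$ and $Y(x)$ through the extension of $Y$ to $V\widehat\otimes V\widehat\otimes(\mb K((w))[[h]])\to V_h((w))$, and uses that the Fourier coefficients of $V_h((\cdot))$-valued fields tend to $0$ $h$-adically to see that the resulting double series collapses, modulo $h^M$, to an iterated Laurent series; the same residue argument then puts $\Res_x$ of the first summand in $V((z))$ modulo $h^M$. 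Since $M$ is arbitrary, $\big(Y(z)^{\mc S}_{(n)}Y(z)\big)(a\otimes b\otimes c)\in V_h((z))$. The main obstacle is precisely this last collapse: the $\gse{x}{z}$-expansion of $\mc S(x-z)$ injects $z$-dependence into the inner $Y(z)$, and because the pole-order bounds of the auxiliary fields are only pointwise in their arguments, one must invoke the defining $h$-adic decay of $V_h((\cdot))$ to be sure that nothing escapes $V((x))((z))$ modulo $h^M$.
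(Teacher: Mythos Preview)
Your proof is correct and follows essentially the same lines as the paper's. Two points of comparison are worth noting. First, for part (b) the paper simply observes that it follows from (a), (c) and Lemma~\ref{QK15Lem.1.2} (a translation covariant quantum field applied to $\vac$ equals $e^{zT}$ of its constant term), whereas you carry out the direct computation; your route has the bonus of producing the explicit value $e^{zT}(a_{(n)}^{\mc S}b)$, which in the paper is the content of the next lemma (equation~\eqref{eq:qprod1}). Second, for part (a) the paper's argument is a bit terser: for the first summand it just notes that $\iota_{x,z}(x-z)^n\mc S^{12}(x-z)(a\otimes b\otimes c)$ carries only non-negative powers of~$z$, so applying $1\otimes Y(z)$ introduces only finitely many negative powers of~$z$ modulo~$h^M$ (implicitly using that, mod~$h^M$, $\mc S(x-z)(a\otimes b)$ is a \emph{finite} sum $\sum_i a_i\otimes b_i\,f_i(x-z)$, which bounds the $z$-pole order uniformly); your more elaborate ``collapse'' discussion amounts to the same thing. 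The argument for (c) via integration by parts in~$x$ is identical to the paper's.
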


\begin{proof}
For part (a),
we consider separately the two expressions in the RHS of \eqref{qnproductsDef}.
In the first term,
$\iota_{x,z}(x-z)^n\mc S^{12}(x-z)(a\otimes b\otimes c)$
only involves non-negative powers of $z$.
Applying to it $1\otimes Y(z)$, we get, modulo $h^M$,
only finitely many negative powers of $z$.
For the second term,
$\iota_{z,x}(x-z)^n$ has non-negative powers of $x$,
hence only the negative powers of $x$ in $(1\otimes Y(x))(b\otimes a\otimes c)$
can give non-zero contribution to the residues in $x$,
which are finitely many modulo $h^M$.
Applying $Y(z)$ to these (finitely many) terms, we get, modulo $h^M$, 
an element of $V((z))$.
Claim (a) follows.

Claim (b) can be easily checked directly, but in fact 
it follows by (a), (c) and Lemma \ref{QK15Lem.1.2}.
We are left to prove claim (c).
By \eqref{qnproductsDef} we have

\begin{align*}
T(Y&(z)_{(n)}^{\mc{S}} Y(z))(a\!\otimes\! b\!\otimes\!c) \\
& = Res_x \Big(
\gse{x}{z}(x-z)^n
T Y(x) \big( 1\otimes Y(z) \big) \mc{S}^{12}(x-z)(a\!\otimes\! b\!\otimes\!c) \\
&\qquad - \gse{z}{x}(x-z)^n 
T Y(z) \big( 1\otimes Y(x) \big)(b \otimes a \otimes c)
\Big) \\
& = Res_x \Big(
\gse{x}{z}(x-z)^n
(\partial_x Y(x)) \big( 1\otimes Y(z) \big) \mc{S}^{12}(x-z)(a\!\otimes\! b\!\otimes\!c) \\
&\qquad + 
\gse{x}{z}(x-z)^n
Y(x) \big( 1\otimes (\partial_z Y(z)) \big) \mc{S}^{12}(x-z)(a\!\otimes\! b\!\otimes\!c) \\
&\qquad + 
\gse{x}{z}(x-z)^n
Y(x) \big( 1\otimes Y(z) \big) \mc{S}^{12}(x-z) (1\otimes1\otimes T)
(a\!\otimes\! b\!\otimes\!c) \\
&\qquad - \gse{z}{x}(x-z)^n 
(\partial_zY(z)) \big( 1\otimes Y(x) \big)(b \otimes a \otimes c)
\\
&\qquad - \gse{z}{x}(x-z)^n 
Y(z) \big( 1\otimes (\partial_xY(x)) \big)(b \otimes a \otimes c)
\\
&\qquad - \gse{z}{x}(x-z)^n 
Y(z) \big( 1\otimes Y(x) \big)(1\otimes 1\otimes T)(b \otimes a \otimes c)
\Big) 
\,.
\end{align*}
For the last equality we used (twice)
the translation covariance of $Y$.
Claim (c) is then obtained by integrating by parts in $x$.
\end{proof}

\begin{lemma}
For every $a,b\in V$ we have
\begin{equation}\label{eq:qprod1}
(Y(z)_{(n)}^{\mc{S}} Y(z))(a\otimes b\otimes\vac)
=
e^{zT}(a_{(n)}^{\mc{S}}b)\,,
\end{equation}
where
\begin{equation}
\label{eq:qprod2}
a_{(n)}^{\mc{S}}b 
= Res_z \big(z^{n}Y(z)S(z)(a \otimes b) \big) 
\,.
\end{equation}
Consequently, the state corresponding to the quantum field $(Y(z)_{(n)}^{\mc{S}} Y(z))(a\otimes b\otimes -)$ is $a_{(n)}^{\mc{S}}b$.
\end{lemma}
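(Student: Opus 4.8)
The plan is to substitute $c=\vac$ into the defining formula \eqref{qnproductsDef} and compute the two summands separately: the second will vanish, and the first will produce $e^{zT}(a_{(n)}^{\mc S}b)$.

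First I would dispose of the second summand. By the vacuum axiom and Lemma \ref{QK15Lem.1.2} we have $Y(x)(a\otimes\vac)=e^{xT}a$, hence $(1\otimes Y(x))(b\otimes a\otimes\vac)=b\otimes e^{xT}a$, and the second summand equals
\[
Res_x\big(\iota_{z,x}(x-z)^n\,Y(z)(b\otimes e^{xT}a)\big)\,.
\]
Both $\iota_{z,x}(x-z)^n$ and $Y(z)(b\otimes e^{xT}a)=\sum_{k\ge0}\tfrac{x^k}{k!}Y(z)(b\otimes T^ka)$ involve only non-negative powers of $x$, so the whole expression does too and its $Res_x$ is $0$. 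This is the only genuinely non-formal step, and it is immediate.

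For the first summand I would again use $Y(z)(q\otimes\vac)=e^{zT}q$ to rewrite $(1\otimes Y(z))\mc S^{12}(x-z)(a\otimes b\otimes\vac)=(1\otimes e^{zT})\mc S(x-z)(a\otimes b)$. Writing $\mc S(x-z)(a\otimes b)=\sum_i p_i\otimes q_i$ and applying, termwise, the translation-covariance identity $Y(x)(p\otimes e^{zT}q)=e^{zT}\,\iota_{x,z}\!\big(Y(x-z)(p\otimes q)\big)$ — a direct consequence of Lemma \ref{QK15Lem.1.2b} — and then recombining by linearity of the obvious extension of $Y(x-z)$, the first summand becomes
\[
e^{zT}\,Res_x\,\iota_{x,z}\!\big((x-z)^n\,Y(x-z)\mc S(x-z)(a\otimes b)\big)\,,
\]
where $(x-z)^n$ has been absorbed inside $\iota_{x,z}$ (which is compatible with multiplication by powers of $x-z$) and $e^{zT}$ has been pulled out of $Res_x$ (it does not involve $x$). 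To finish I would invoke the elementary change-of-variables identity $Res_x\,\iota_{x,z}\big(\Phi(x-z)\big)=Res_w\,\Phi(w)$, valid for any $\Phi\in V_h((w))$: expanding $\Phi(w)=\sum_j\phi_jw^j$, the coefficient of $x^{-1}$ in $\iota_{x,z}\!\big((x-z)^j\big)$ equals $\delta_{j,-1}$, so only $j=-1$ survives. Applied to $\Phi(w)=w^nY(w)\mc S(w)(a\otimes b)$ — an element of $V_h((w))$ by the obvious extension of $Y$, cf.\ Proposition \ref{prop:non-deg} — and recalling \eqref{eq:qprod2}, this gives exactly $e^{zT}(a_{(n)}^{\mc S}b)$, which proves \eqref{eq:qprod1}.

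For the last assertion, note that $(Y(z)_{(n)}^{\mc S}Y(z))(a\otimes b\otimes-)$ is a translation-covariant $\End_{\Kh}V$-valued quantum field by Lemma \ref{lem:5.4}; hence, by Lemma \ref{QK15Lem.1.2} together with \eqref{eq:qprod1}, the state it corresponds to is $Res_z\big(z^{-1}e^{zT}(a_{(n)}^{\mc S}b)\big)=a_{(n)}^{\mc S}b$. I expect no conceptual obstacle in this argument; the only care needed throughout is to keep the two geometric expansions $\iota_{x,z}$ and $\iota_{z,x}$ apart, to justify the ``un-expansion'' that presents the a priori doubly expanded first summand as a single residue in the variable $w=x-z$, and to confirm that $Y(w)\mc S(w)(a\otimes b)$ genuinely lies in $V_h((w))$.
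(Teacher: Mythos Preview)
Your argument is correct and complete. The only quibble is the appeal to Proposition~\ref{prop:non-deg} for $Y(w)\mc S(w)(a\otimes b)\in V_h((w))$: that proposition is stated for braided vertex algebras, whereas this lemma lives in the generality of braided state-field correspondences. But the claim is immediate from the definitions anyway (mod $h^M$, $\mc S(w)(a\otimes b)$ is a finite sum $\sum_i a_i\otimes b_i\,f_i(w)$ with $f_i\in\mb K((w))$, and each $Y(w)(a_i\otimes b_i)\in V((w))$), so no harm is done.

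The paper's own proof takes a slightly different and shorter route. Rather than computing the full $z$-dependence, it first invokes Lemma~\ref{lem:5.4}(a),(c) together with Lemma~\ref{QK15Lem.1.2} to conclude a priori that $(Y(z)_{(n)}^{\mc S}Y(z))(a\otimes b\otimes\vac)=e^{zT}\big[(Y(z)_{(n)}^{\mc S}Y(z))(a\otimes b\otimes\vac)\big|_{z=0}\big]$, so that one only needs to evaluate the defining expression at $z=0$. At $z=0$ the second summand vanishes for the same reason you give, and the first collapses trivially to $\Res_x x^nY(x)\mc S(x)(a\otimes b)$ without any need for the translation-covariance manipulation via Lemma~\ref{QK15Lem.1.2b} or the change-of-variables identity $\Res_x\iota_{x,z}\Phi(x-z)=\Res_w\Phi(w)$. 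Your approach trades that conceptual shortcut for a direct computation; both are valid, but the paper's version sidesteps exactly the bookkeeping of expansions that you flagged as the delicate part.
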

\begin{proof}
By Lemmas \ref{lem:5.4} and \ref{QK15Lem.1.2}
it suffices to prove equation \eqref{eq:qprod1} at $z=0$.
We have
\begin{align*}
&(Y(z)_{(n)}^{\mc{S}} Y(z)) (a\otimes b\otimes\vac)|_{z=0}\\
& = Res_x \Big(
\gse{x}{z}(x-z)^n
Y(x) \big( 1\otimes Y(z) \big) \mc{S}^{12}(x-z)(a\!\otimes\! b\!\otimes\!\vac) \\
& - \gse{z}{x}(x-z)^n 
Y(z) \big( 1\otimes Y(x) \big)(b \otimes a \otimes\vac) \Big)\Big|_{z=0}
\,.
\end{align*}
Note that 
$Y(x)(a\otimes\vac)$
has no negative powers of $x$,
hence the residue in $x$ in the second term of the RHS vanishes.
As a result, we get, using Lemma \ref{QK15Lem.1.2},
\begin{align*}
(Y(z&)_{(n)}^{\mc{S}} Y(z)) (a\otimes b\otimes\vac)|_{z=0} \\
& = Res_x \Big(
\gse{x}{z}(x-z)^n
Y(x) \big( 1\otimes Y(z) \big) \mc{S}^{12}(x-z)(a\!\otimes\! b\!\otimes\!\vac)
\Big)\Big|_{z=0} \\
& = Res_x
x^n
Y(x) \mc{S}(x)(a\otimes b)
\,.
\end{align*}
\end{proof}

\begin{proposition}\label{prop:sn-prod}
In a braided vertex algebra $V$ we have the following properties:
\begin{enumerate}[(a)]
\item
$\vac_{(n)}^{\mc{S}}a = \delta_{n,-1}a$ for all $n\in\mb Z$ and $a\in V$;
\item
$a_{(n)}^{\mc{S}}\vac = \delta_{n\leq-1}\frac{T^{-n-1}}{(-n-1)!}a$ for all $n\in\mb Z$ and $a\in V$;
\item
$a_{(-n-1)}^{\mc{S}}b=\Big(\frac{T^n}{n!}a\Big)_{(-1)}^{\mc{S}}b$
for all $n\geq0$ and $a,b\in V$.
\item
$T$ is a derivation of all products \eqref{eq:qprod2}.
\end{enumerate}
\end{proposition}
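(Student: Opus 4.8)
The plan is to reduce all four statements to elementary properties of the opposite state-field correspondence $Y^{op}$ defined in \eqref{eq:Yop}. The starting point is Lemma \ref{EK5Lem.1.2}, which gives $Y(z)\mc S(z)=Y^{op}(z)$: writing $Y^{op}(z)(a\otimes b)=\sum_{n\in\mb Z}(a^{\mc S}_{(n)}b)z^{-n-1}$, the product $a^{\mc S}_{(n)}b$ of \eqref{eq:qprod2} is precisely the coefficient of $z^{-n-1}$ in $Y^{op}(z)(a\otimes b)$. I would also use that $Y^{op}$ is itself a topological state-field correspondence --- the $\mb K[[h]]$-analogue of Proposition \ref{BKProp.2.8}, already invoked in the proof of Proposition \ref{prop:non-deg} --- so that it obeys the vacuum axiom and both translation covariance axioms. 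Each of (a)--(d) then becomes a short residue computation.

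For (a) I would compute, using the definition of $Y^{op}$, Lemma \ref{QK15Lem.1.2} and the vacuum axiom, that $Y^{op}(z)(\vac\otimes a)=e^{zT}Y(-z)(a\otimes\vac)=e^{zT}e^{-zT}a=a$ is constant in $z$, whence $\vac^{\mc S}_{(n)}a=\Res_z(z^n a)=\delta_{n,-1}a$. For (b) the same manipulation gives $Y^{op}(z)(a\otimes\vac)=e^{zT}Y(-z)(\vac\otimes a)=e^{zT}a=\sum_{k\geq0}\tfrac{T^k a}{k!}z^k$, so $a^{\mc S}_{(n)}\vac=\Res_z\big(z^n e^{zT}a\big)$ picks out $\tfrac{T^{-n-1}}{(-n-1)!}a$ when $-n-1\geq0$ and $0$ otherwise, as claimed.

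For (c) I would use translation covariance 2 for $Y^{op}$ to get $Y^{op}(z)(T^na\otimes b)=\partial_z^nY^{op}(z)(a\otimes b)$, then apply $\Res_z(z^{-1}\,\cdot\,)$ and integrate by parts $n$ times --- using that $\Res_z(\partial_z f)=0$ and $\Res_z(z^{-k}\partial_zg)=k\,\Res_z(z^{-k-1}g)$ --- to obtain $\big(\tfrac{T^n}{n!}a\big)^{\mc S}_{(-1)}b=\tfrac{1}{n!}\Res_z\big(z^{-1}\partial_z^n Y^{op}(z)(a\otimes b)\big)=\Res_z\big(z^{-n-1}Y^{op}(z)(a\otimes b)\big)=a^{\mc S}_{(-n-1)}b$. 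For (d) I would combine translation covariance 1 and 2 for $Y^{op}$ to see that $T$ is a derivation of $Y^{op}$, i.e.\ $TY^{op}(z)(a\otimes b)=Y^{op}(z)(Ta\otimes b)+Y^{op}(z)(a\otimes Tb)$, and extract the coefficient of $z^{-n-1}$.

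I do not expect a real obstacle here: the content is a bookkeeping exercise with residues once the identification of $a^{\mc S}_{(n)}b$ with the Fourier coefficients of $Y^{op}$ is in place. The only step requiring some attention is the justification that $Y^{op}$ is a topological state-field correspondence --- in particular that it satisfies both translation covariance axioms over $\mb K[[h]]$ --- which is the $\mb K[[h]]$-version of Proposition \ref{BKProp.2.8} and was noted in passing in the proof of Proposition \ref{prop:non-deg}.
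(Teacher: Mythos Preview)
Your proposal is correct. The underlying idea --- identify $a^{\mc S}_{(n)}b$ with the Fourier coefficients of $Y^{op}(z)(a\otimes b)$ via Lemma~\ref{EK5Lem.1.2} and then use that $Y^{op}$ is a topological state-field correspondence --- is the same one the paper uses, but you apply it more uniformly. For (a), (b) and (d) the paper routes through Proposition~\ref{prop:non-deg}(a) and (d), which are themselves proved from the vacuum and derivation properties of $Y^{op}$; you simply inline those arguments. The real difference is in (c): the paper expands $a^{\mc S}_{(-n-1)}b$ explicitly via $Y^{op}$ into sums of the form $\sum_l \tfrac{T^l}{l!}(-1)^{l+n}b_{(l-n-1)}a$ and then rewrites using $a_{(-n-1)}b=\big(\tfrac{T^n}{n!}a\big)_{(-1)}b$, whereas your use of translation covariance~2 for $Y^{op}$ followed by $n$-fold integration by parts on $\Res_z\big(z^{-1}\partial_z^n(\cdot)\big)$ gets to the identity in one line. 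Your version is cleaner and makes the dependence on the state-field correspondence axioms for $Y^{op}$ completely transparent; the paper's version has the (minor) advantage of producing an explicit closed formula for $a^{\mc S}_{(-n-1)}b$ in terms of the ordinary products $b_{(m)}a$ along the way.
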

\begin{proof}
Claims (a) and (b)
are a direct consequence of Proposition \ref{prop:non-deg}(a).
Indeed, 
since $(\mc S(z) - id)(\vac \otimes a) \in \ker Y(z)$, we have
\begin{align*}
\vac_{(n)}^{\mc S} a = Res_z z^n Y(z) \mc S(z) (\vac \otimes a) = Res_z z^n Y(z) (\vac \otimes a) = Res_z z^n a = \delta_{n, -1} a
\,,
\end{align*}
proving (a).
Similarly, since $(\mc S(z) - id)(a \otimes \vac) \in \ker Y(z)$, we have
\begin{align*}
&a_{(n)}^{\mc S} \vac = Res_z z^n Y(z) \mc S(z) (a \otimes \vac) = Res_z z^n Y(z) (a \otimes \vac) = Res_z z^n e^{zT}a\\
&= \delta_{n \leq -1} \frac{T^{-n-1}}{(-n-1)!}a
\,,
\end{align*}
proving (b).
Next, we prove claim (c).
Applying $Res_z z^{-n-1}$ to both sides of equation \eqref{EK5Lem.1.2Eq},
we get
\begin{align*}
a_{(-n-1)}^{\mc S} b = \sum_{l \geq 0} \frac{T^l}{l!} (-1)^{l+n} b_{(l-n-1)} a.
\end{align*}
By translation covariance, for any $n \geq 0$, one has
$$a_{(-n-1)} b = \left( \frac{T^n}{n!} a\right)_{(-1)} b.$$
It follows that, for any $n \geq 0$,
\begin{align*}
a_{(-n-1)}^{\mc S} b = \sum_{l = 0}^n (-1)^{l+n} \frac{T^l}{l!} \left(\frac{T^{n-l}}{(n-l)!} b \right)_{(-1)} a + \sum_{m \geq 0} (-1)^{m+1} \frac{T^{m+n+1}}{(m+n+1)!} b_{(m)} a.
\end{align*}
Finally, we prove claim (d).
Let $a, b \in V$. By the translation covariance on $Y$, one has
\begin{align*}
&T Y(z)\mc S(z) (a \otimes b) - Y(z)\mc S(z) (T \otimes 1)(a \otimes b) - Y(z)\mc S(z)(1 \otimes T) (a \otimes b)\\
&= Y(z) (1 \otimes T) \mc S(z) (a \otimes b) + Y(z) (T \otimes 1) \mc S(z) (a \otimes b) - Y(z)\mc S(z)(T \otimes 1)(a \otimes b)\\
&\phantom{{}={}} - Y(z)\mc S(z)(1 \otimes T)(a \otimes b)\\
&= Y(z) \left([T \otimes 1, \mc S(z)] + [1 \otimes T, \mc S(z)]\right)(a \otimes b).
\end{align*}
Hence, by Proposition \ref{prop:non-deg}(d),
\begin{equation}
\label{etichettaancoranontroppolunga}
T Y(z)\mc S(z) = Y(z)\mc S(z) (T \otimes 1) + Y(z)\mc S(z)(1 \otimes T)\,.
\end{equation}
The claim follows by taking residues of both sides multiplied by $z^n$.
\end{proof}

\subsection{Quantum Borcherds identity and quantum $n$-product identities}
\label{sec:5.3}

Let $(V, \vac, Y, \mc S)$ be a braided state-field correspondence.
The \emph{quantum Borcherds identity} is ($a, b, c \in V$, $n \in \mathbb{Z}$):
\begin{equation}
\label{qBorcherdsIdentities}
\begin{split}
& \gse{z}{w}(z-w)^n
Y(z) \big( 1\otimes Y(w) \big) \mc{S}^{12}(z-w)
(a \otimes b \otimes c)  \\
& - \gse{w}{z}(z-w)^n
Y(w) \big( 1\otimes Y(z) \big) (b \otimes a \otimes c)
= 
\sum_{j \in \mb Z_{\geq0}} Y(w)\big(a_{(n+j)}^{\mc{S}}b \otimes c\big)\
\frac{\spd{w}^j \delta(z,w)}{j!}
\end{split}
\end{equation}
The \emph{quantum} $n$-\emph{product identities}, for $n \in \mb Z$, 
are ($a, b, c \in V$)
\begin{equation}
\label{qnproductIdentities}
\Big( Y(z)_{(n)}^{\mc{S}} Y(z)\Big)(a\otimes b\otimes c) 
= Y(z)\big(a_{(n)}^{\mc{S}}b \otimes c \big).
\end{equation}

\begin{lemma}
\label{EquivalenceqBorcherdsIdentitiesqnprodIdentitiesLem}
The quantum Borcherds identity \eqref{qBorcherdsIdentities} 
is equivalent to 
the quantum $n$-product identities \eqref{qnproductIdentities} for all $n \in \mb Z$
and the $\mc{S}$-locality \eqref{S-locality} combined.
\end{lemma}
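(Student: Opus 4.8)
The plan is to prove both implications by analysing the left-hand side of the quantum Borcherds identity \eqref{qBorcherdsIdentities}, which I abbreviate by the formal distribution
\[
\Phi_n(z,w)=\gse{z}{w}(z-w)^n Y(z)\big(1\otimes Y(w)\big)\mc{S}^{12}(z-w)(a\otimes b\otimes c)-\gse{w}{z}(z-w)^n Y(w)\big(1\otimes Y(z)\big)(b\otimes a\otimes c) .
\]
The whole argument rests on three elementary observations, which I would record first. (a) Multiplication by a polynomial in $z-w$ commutes with both expansions $\gse{z}{w}$ and $\gse{w}{z}$, so $(z-w)^j\Phi_n(z,w)=\Phi_{n+j}(z,w)$ for all $j\geq0$. (b) For $n=0$ one has $\Phi_0(z,w)=Y(z)(1\otimes Y(w))(\gse{z}{w}\mc S(z-w)(a\otimes b)\otimes c)-Y(w)(1\otimes Y(z))(b\otimes a\otimes c)$, so that ``$(z-w)^N\Phi_0\equiv0\bmod h^M$'' is exactly the $\mc S$-locality \eqref{S-locality}. (c) Comparing with Definition \eqref{qnproductsDef} after relabelling its integration variable, $\Res_z\Phi_m(z,w)=\big(Y(w)^{\mc S}_{(m)}Y(w)\big)(a\otimes b\otimes c)$ for all $m\in\mb Z$; and since $\Res_z\partial_w^j\delta(z,w)=\delta_{j,0}$, taking $\Res_z$ of the right-hand side of \eqref{qBorcherdsIdentities} gives $Y(w)(a^{\mc S}_{(n)}b\otimes c)$. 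One argues modulo $h^M$ throughout, which suffices since $M$ is arbitrary.

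To deduce \eqref{qnproductIdentities} and \eqref{S-locality} from \eqref{qBorcherdsIdentities}: taking $\Res_z$ of \eqref{qBorcherdsIdentities} and using (c) yields the quantum $n$-product identity \eqref{qnproductIdentities} for every $n$. For $\mc S$-locality I would put $n=0$ in \eqref{qBorcherdsIdentities}, whose left-hand side is then $\Phi_0$ by (b). Since $Y(z)\mc S(z)(a\otimes b)\in V_h((z))$, the coefficients $a^{\mc S}_{(j)}b$ of \eqref{eq:qprod2} vanish modulo $h^M$ once $j$ is large, so modulo $h^M$ the sum on the right of \eqref{qBorcherdsIdentities} is finite; choosing $N$ larger than every $j$ occurring makes $(z-w)^N\partial_w^j\delta(z,w)=0$ for each of them, and multiplying \eqref{qBorcherdsIdentities} by $(z-w)^N$ then reproduces \eqref{S-locality} by (b).

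For the converse, fix $a,b,c$ and $M$. By (a) and (b), $(z-w)^{N'}\Phi_n=\Phi_{n+N'}=(z-w)^{n+N'}\Phi_0$ once $n+N'\geq0$, and \eqref{S-locality} makes this $\equiv0\bmod h^M$ for $N'$ large; thus $\Phi_n$ is a local formal distribution over the $\mb K$-vector space $V/h^M V$, and the Decomposition Theorem \ref{KDecThm} gives $\Phi_n(z,w)\equiv\sum_{j\geq0}c^j(w)\,\frac{\partial_w^j\delta(z,w)}{j!}\bmod h^M$ with $c^j(w)=\Res_z\big((z-w)^j\Phi_n(z,w)\big)=\Res_z\Phi_{n+j}(z,w)$, where one must know that the two products constituting $\Phi_n$ lie, modulo every $h^M$, in $V_h((z))((w))$ and $V_h((w))((z))$ respectively so that all the residues and products make sense --- the same topological input that enters Lemma \ref{LiLem.2.1b}. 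By (c), $c^j(w)=\big(Y(w)^{\mc S}_{(n+j)}Y(w)\big)(a\otimes b\otimes c)$, which by the assumed quantum $n$-product identities \eqref{qnproductIdentities} equals $Y(w)(a^{\mc S}_{(n+j)}b\otimes c)$; feeding this back gives \eqref{qBorcherdsIdentities} modulo $h^M$, hence identically.

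The overall shape mirrors the classical passage from the Jacobi identity to the Borcherds identity (cf.\ the proof of Theorem \ref{CharacterizationVAs}), so I expect the algebraic steps to be routine. The one genuinely delicate point, and the main thing to get right, is the $h$-adic bookkeeping underpinning the use of the Decomposition Theorem modulo each $h^M$: that the series in play have, modulo every power of $h$, the fieldness and Laurentness properties required, and that the nominally infinite sum in \eqref{qBorcherdsIdentities} truncates modulo each $h^M$. This is exactly the analysis carried out in Lemma \ref{LiLem.2.1b}, which I would establish first.
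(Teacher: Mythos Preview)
Your proposal is correct and follows essentially the same approach as the paper's proof: both directions rest on applying the Decomposition Theorem~\ref{KDecThm} modulo $h^M$ to the local distribution $\Phi_n$, identifying the coefficients $c^j(w)$ via the quantum $n$-product, and using that $a^{\mc S}_{(n)}b\equiv0\bmod h^M$ for $n\gg0$ to extract $\mc S$-locality. The only cosmetic differences are that you treat the implication Borcherds $\Rightarrow$ ($\mc S$-locality $+$ $n$-product) first (the paper does the converse first), and for $\mc S$-locality you set $n=0$ and multiply by $(z-w)^N$ whereas the paper simply takes $n$ large; by your observation (a) these are the same manoeuvre.
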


\begin{proof}
Let us first prove that the $\mc S$-locality \eqref{S-locality}
and the quantum $n$-product identities \eqref{qnproductIdentities}
imply the quantum Borcherds identities \eqref{qBorcherdsIdentities}.
Let
\begin{equation}
\label{eq:a}
\begin{split}
a(z,w)
& =
\gse{z}{w}(z-w)^n
Y(z) \big( 1\otimes Y(w) \big) \mc{S}^{12}(z-w)(a \otimes b\otimes c)
\\
& -\gse{w}{z}(z-w)^n
Y(w) \big( 1 \otimes Y(z) \big)(b \otimes a \otimes c )
\end{split}
\end{equation}
By the $\mc{S}$-locality assumption \eqref{S-locality},
this is a local quantum field in the sense of Section \ref{sec:3.35},
i.e. for every $M\in\mb Z_+$ there exists $N\in\mb Z_+$ such that
$$
(z-w)^Na(z,w)=0\quad mod\ h^M
\,.
$$
Note that the Decomposition Theorem \ref{KDecThm} holds for local fields
in a topologically free $\mb K[[h]]$-module $V$ in the following form
$$
a(z,w)
=
\sum_{j=0}^N c^j(w) \frac{\spd{w}^j \delta(z,w)}{j!}
\quad mod\ h^M
\,,
$$
where
$$
c_j(w)
=
\Res_z(z-w)^ja(z,w)
\,,
$$
which vanishes modulo $h^M$ for $j\geq N$.
Taking $a(z,w)$ from \eqref{eq:a}, we thus get
\begin{align*}
&
\gse{z}{w}(z-w)^n
Y(z) \big( 1\otimes Y(w) \big) \mc{S}^{12}(z-w)(a \otimes b\otimes c)
\\
& -\gse{w}{z}(z-w)^n
Y(w) \big( 1 \otimes Y(z) \big)(b \otimes a \otimes c ) 
= 
\sum_{j=0}^N c^j(w) \frac{\spd{w}^j \delta(z,w)}{j!} \quad mod\ h^M,
\end{align*}
where
\begin{align*}
c^j(w) &= Res_z \Big(
\gse{z}{w}(z-w)^{n+j}
Y(z) \big( 1\otimes Y(w) \big) \mc{S}^{12}(z-w)(a \otimes b\otimes c) \\
& -\gse{w}{z}(z-w)^{n+j}
Y(w) \big( 1 \otimes Y(z) \big)(b \otimes a \otimes c )
\Big) \\
&= \Big(Y(w)_{(n+j)}^{\mc{S}} Y(w)\Big)(a\otimes b\otimes c) \\
&= Y(w)\big(a_{(n+j)}^{\mc{S}}b \otimes c \big)
\,,
\end{align*}
which vanishes modulo $h^M$ for $j\geq N$.
For the last identity we used the assumption \eqref{qnproductIdentities}.
Hence,
\begin{align*}
&
\gse{z}{w}(z-w)^n
Y(z) \big( 1\otimes Y(w) \big) \mc{S}^{12}(z-w)(a \otimes b\otimes c)
\\
& -\gse{w}{z}(z-w)^n
Y(w) \big( 1 \otimes Y(z) \big)(b \otimes a \otimes c ) \\
& = 
\sum_{j\in\mb Z_{\geq0}} Y(w)\big(a_{(n+j)}^{\mc{S}}b \otimes c \big)
\frac{\spd{w}^j \delta(z,w)}{j!} \quad mod\ h^M
\,.
\end{align*}
Since the above identity holds modulo $h^M$ for every $M$,
it must hold identically,
i.e. \eqref{qBorcherdsIdentities} holds.

Conversely,
the quantum $n$-product identities \eqref{qnproductIdentities} are obtained from
the quantum Borcherds identity \eqref{qBorcherdsIdentities} by taking the residue in $z$. 
Moreover, for any $M \in \mb Z_{\geq0}$, 
$$
a_{(n)}^{\mc{S}} b = Res_z \big( z^{n} Y(z)\mc S(z)(a \otimes b) \big)
$$
vanishes modulo $h^M$ for sufficiently large $n$.
The $\mc S$-locality \eqref{S-locality} thus follows by the quantum Borcherds identity.
\end{proof}

\subsection{Equivalence of the quantum Borcherds identity 
and the $\mc S$-locality combined with associativity}
\label{sec:5.4}

By the following theorem, the quantum Borcherds identity holds for any quantum vertex algebra.

\begin{theorem}
\label{associativeBVABorcherdsIdentities}
Let $(V, \vac, Y, \mc S)$ be a braided state-field correspondence.
Then the $\mc{S}$-locality \eqref{S-locality} and the associativity relation \eqref{hfAssociativity} 
hold if and only if the quantum Borcherds identity \eqref{qBorcherdsIdentities} holds. 
Consequently, the quantum Borcherds identity \eqref{qBorcherdsIdentities} holds 
in any quantum vertex algebra.
\end{theorem}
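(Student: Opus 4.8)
The plan is to prove the equivalence
\[
\big(\text{$\mc S$-locality }\eqref{S-locality}\big)\ \wedge\ \big(\text{associativity }\eqref{hfAssociativity}\big)\ \Longleftrightarrow\ \big(\text{quantum Borcherds }\eqref{qBorcherdsIdentities}\big),
\]
and then conclude the ``consequently'' part from Proposition \ref{EK5Prop.1.1} together with Definition \ref{QVADef}. For the forward direction, assuming $\mc S$-locality and associativity, I would first invoke Theorem \ref{July2017Thm} to upgrade associativity to the identity $Y\mc S=Y^{op}$, and then pass to the quantum $n$-product identities \eqref{qnproductIdentities}: once these are established, Lemma \ref{EquivalenceqBorcherdsIdentitiesqnprodIdentitiesLem} gives the quantum Borcherds identity immediately (since $\mc S$-locality is already assumed). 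So the real content of the forward direction is proving the quantum $n$-product identities \eqref{qnproductIdentities}. Conversely, Lemma \ref{EquivalenceqBorcherdsIdentitiesqnprodIdentitiesLem} already shows that the quantum Borcherds identity implies both $\mc S$-locality and the quantum $n$-product identities; so for the reverse direction I only need to deduce the associativity relation \eqref{hfAssociativity} from the quantum Borcherds identity.

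For the reverse implication, I would extract associativity from \eqref{qBorcherdsIdentities} by the standard ``$\delta$-function'' manipulation used in the vertex-algebra case (cf. the proof that \eqref{VJac}$\Rightarrow$\eqref{VFAYYop} and the classical Borcherds $\Rightarrow$ associativity argument): apply a suitable operation in the formal variables — multiply by $(z-w)^N$ for $N\gg0$ so that the $\delta$-function terms on the RHS truncate to a finite sum, then use that, modulo $h^M$, one can substitute $z\mapsto z+w$ and expand via $\iota_{z,w}$, using Lemma \ref{QK15Lem.1.2b} and Proposition \ref{BKProp.2.7} to rewrite $Y(w)(1\otimes Y(z))$ in terms of $Y(w)(Y(z)\otimes 1)$. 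Concretely, I would mirror the topologically-free analogue of Lemma \ref{LiLem.2.1} — namely Lemma \ref{LiLem.2.1b}, which the excerpt promises is proved in this section — applied to $a(z,w)=Y(z)(1\otimes Y(w))\mc S^{12}(z-w)(a\otimes b\otimes c)$, $b(z,w)=Y(w)(1\otimes Y(z))(b\otimes a\otimes c)$, and $c(x,w)=Y(w)(Y(x)\otimes 1)(a\otimes b\otimes c)$; the quantum Borcherds identity is (essentially) the $\delta$-function form, and Lemma \ref{LiLem.2.1b} then converts it into the $\mc S$-locality together with the associativity relation.

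For the forward direction (quantum $n$-product identities), the cleanest route is via the quantum Goddard uniqueness theorem, Proposition \ref{BraidedGoddard}. The idea: fix $a,b\in V$ and $n\in\mb Z$, and consider the $\End_{\Kh}V$-valued formal distribution
\[
a^{\mc S}_{(n)}(z) := \big(Y(z)^{\mc S}_{(n)}Y(z)\big)(a\otimes b\otimes -).
\]
By Lemma \ref{lem:5.4} this is a translation-covariant quantum field, and by the lemma computing \eqref{eq:qprod1} its associated state is $a^{\mc S}_{(n)}b$. To apply Proposition \ref{BraidedGoddard} and conclude $a^{\mc S}_{(n)}(z)=Y^{op}(z)(a^{\mc S}_{(n)}b\otimes -)$ — which, since $Y\mc S=Y^{op}$ holds under our hypotheses, is exactly \eqref{qnproductIdentities} after re-inserting $\mc S$ — I must verify that for every $d\in V$ the pair $\big(a^{\mc S}_{(n)}(z),\,Y(w)(d\otimes -)\big)$ is local on every vector of $V$. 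This locality is the technical heart of the argument: I would prove it by writing $a^{\mc S}_{(n)}(z)$ via its defining residue \eqref{qnproductsDef}, inserting it into a commutator with $Y(w)(d\otimes -)$, and using $\mc S$-locality \eqref{S-locality} repeatedly (together with associativity, to reorder the three fields $Y(x),Y(z),Y(w)$) to push all the $\mc S$-factors and field orderings into a common form; multiplying by a large enough power of $(z-w)$ kills the discrepancy modulo $h^M$. Here one has to be careful that the exponent $N$ in $\mc S$-locality depends on $M$, so the locality order for $a^{\mc S}_{(n)}(z)$ will also depend on $M$ — which is exactly what the topologically-free notion of locality in Section \ref{sec:3.35} allows.

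The main obstacle I anticipate is this last locality verification in the forward direction: juggling three quantum fields with the braiding $\mc S$ inserted, keeping track of which expansions ($\iota_{z,w}$ versus $\iota_{w,z}$) are valid modulo $h^M$, and ensuring that the residue in $x$ (which is only finite modulo $h^M$) interacts correctly with multiplication by $(z-w)^N$. This is the quantum analogue of the bookkeeping in the proof of Theorem \ref{July2017Thm}, and, as there, the $h$-adic truncation is what makes everything finite and legitimate. Once that locality is in hand, Proposition \ref{BraidedGoddard} and Lemma \ref{EquivalenceqBorcherdsIdentitiesqnprodIdentitiesLem} finish the proof, and the final sentence of the theorem follows since a quantum vertex algebra satisfies \eqref{hfAssociativity} by definition and \eqref{S-locality} as part of being a braided vertex algebra.
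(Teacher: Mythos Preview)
Your forward direction has a genuine gap. Proposition \ref{BraidedGoddard} concludes that the field in question equals $Y^{op}(z)(a^{\mc S}_{(n)}b\otimes -)=Y(z)\mc S(z)(a^{\mc S}_{(n)}b\otimes -)$, \emph{not} $Y(z)(a^{\mc S}_{(n)}b\otimes -)$. These differ by a nontrivial $\mc S$-twist, and the quantum $n$-product identity \eqref{qnproductIdentities} asserts the latter, without any $\mc S$ on the right-hand side. Your phrase ``exactly \eqref{qnproductIdentities} after re-inserting $\mc S$'' does not resolve this: there is nothing to re-insert. Moreover, the hypothesis of Proposition \ref{BraidedGoddard} is \emph{ordinary} locality of $(a^{\mc S}_{(n)}(z),Y(w)(d\otimes -))$ on each vector, whereas the tool you have is $\mc S$-locality; repeatedly applying \eqref{S-locality} to reorder three fields produces $\mc S$-twisted identities, not untwisted ones, so the required commutator locality is not available. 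Thus the Goddard route, as stated, does not reach \eqref{qnproductIdentities}.

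The paper avoids this entirely. For the forward direction it does not pass through the $n$-product identities at all: from $\mc S$-locality it gets $Y\mc S=Y^{op}$ (Lemma \ref{EK5Lem.1.2}), rewrites associativity in the $\mc S$-shifted form \eqref{EquivalenthfAssociativity} via Lemma \ref{EquivalenthfAssociativityLem}, and then observes that $\mc S$-locality and \eqref{EquivalenthfAssociativity} are precisely the two conditions \eqref{maratona} of Lemma \ref{LiLem.2.1b} for the triple
\[
a(z,w)=Y(z)(1\otimes Y(w))\mc S^{12}(z-w),\quad b(z,w)=Y(w)(1\otimes Y(z))\sigma,\quad c(x,w)=Y(w)(Y(x)\mc S(x)\otimes 1).
\]
Lemma \ref{LiLem.2.1b} then yields the Jacobi-type identity \eqref{qJacobiop}, and taking $\Res_x x^n$ gives the quantum Borcherds identity directly. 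Note that your proposed $c(x,w)=Y(w)(Y(x)\otimes 1)$ is off by the factor $\mc S(x)$: summing the right-hand side of \eqref{qBorcherdsIdentities} against $x^{-n-1}$ produces $Y(w)(Y(x)\mc S(x)\otimes 1)$, and it is only through Lemma \ref{EquivalenthfAssociativityLem} that this becomes equivalent to the unshifted associativity \eqref{hfAssociativity}. Your reverse direction is essentially the paper's (sum over $n$, then Lemma \ref{LiLem.2.1b}), but with this correction to $c(x,w)$.
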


\begin{lemma}
\label{EquivalenthfAssociativityLem}
If $(V, \vac, Y, \mc S)$ is a braided state-field correspondence,
the associativity relation \eqref{hfAssociativity} holds if and only if, 
for any $a, b, c \in V$ and $M \in \mb Z_{\geq0}$, there exists $N' \geq 0$ such that
\begin{equation}
\label{EquivalenthfAssociativity}
\begin{split}
(z+w)^{N'} &\gse{z}{w} Y(z+w) \big( 1 \otimes Y(w) \big) \big( \mc{S}(z)(a \otimes b) \otimes c \big)\\
&= (z+w)^{N'} Y(w) \big( Y(z)\mc S(z) \otimes 1 \big) (a \otimes b \otimes c)\ \textrm{mod}\ h^M.
\end{split}
\end{equation}
\end{lemma}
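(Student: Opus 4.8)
The plan is to recognise \eqref{EquivalenthfAssociativity} as the associativity relation \eqref{hfAssociativity} with the argument $a\otimes b$ replaced by $\mc S(z)(a\otimes b)$, the parameter $z$ of $\mc S$ being identified with the variable appearing in $Y(z)$ and $Y(z+w)$. Since $\mc S=1+O(h)$ and $V$ is topologically free over $\mb K[[h]]$, the $\mb K((z))[[h]]$-linear extension of $\mc S(z)$ to an endomorphism of $V\widehat\otimes V\widehat\otimes(\mb K((z))[[h]])$ is invertible, with inverse $\mc S(z)^{-1}=\sum_{k\geq0}(1-\mc S(z))^k=1+O(h)$. Moreover, modulo any power $h^M$, both $\mc S(z)(a\otimes b)$ and $\mc S(z)^{-1}(a\otimes b)$ are finite sums $\sum_i a_i\otimes b_i\otimes f_i(z)$ with $a_i,b_i\in V$ and $f_i(z)\in\mb K((z))[[h]]$. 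The whole proof then consists in transporting one identity into the other along $\mc S(z)^{\pm1}$, term by term, modulo $h^M$.

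First I would prove \eqref{hfAssociativity}$\Rightarrow$\eqref{EquivalenthfAssociativity}. Fix $a,b,c$ and $M$, and write $\mc S(z)(a\otimes b)\equiv\sum_{i=1}^{r}a_i\otimes b_i\otimes f_i(z)$ modulo $h^M$. Apply \eqref{hfAssociativity} to each triple $a_i\otimes b_i\otimes c$, choosing a single exponent $N$ valid for all the finitely many $i$; then multiply the $i$-th identity by the scalar series $f_i(z)$ and sum over $i$. On the left, $\iota_{z,w}$ commutes with multiplication by elements of $\mb K((z))[[h]]$, and pulling the $f_i(z)$ through the ($\mb K((z))[[h]]$-linearly extended) maps $Y(z+w)$ and $Y(w)$ turns the sum into $\iota_{z,w}(z+w)^N Y(z+w)(1\otimes Y(w))(\mc S(z)(a\otimes b)\otimes c)$ modulo $h^M$; on the right one likewise gets $(z+w)^N Y(w)(Y(z)\mc S(z)(a\otimes b)\otimes c)$ modulo $h^M$. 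This is \eqref{EquivalenthfAssociativity} modulo $h^M$, hence, $M$ being arbitrary, \eqref{EquivalenthfAssociativity} holds.

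The converse is symmetric: given $a',b',c$ and $M$, write $\mc S(z)^{-1}(a'\otimes b')\equiv\sum_{i=1}^{r}a_i\otimes b_i\otimes g_i(z)$ modulo $h^M$, apply \eqref{EquivalenthfAssociativity} to each $a_i\otimes b_i\otimes c$ with a common exponent $N'$, multiply by $g_i(z)$ and sum. Using the $\mb K((z))[[h]]$-linearity of $\mc S(z)$ one has $\sum_i\mc S(z)(a_i\otimes b_i)g_i(z)\equiv\mc S(z)\mc S(z)^{-1}(a'\otimes b')=a'\otimes b'$ modulo $h^M$, so the left side collapses to $\iota_{z,w}(z+w)^{N'}Y(z+w)(1\otimes Y(w))(a'\otimes b'\otimes c)$ and the right side to $(z+w)^{N'}Y(w)(Y(z)(a'\otimes b')\otimes c)$ modulo $h^M$; this is \eqref{hfAssociativity} with $N=N'$ modulo $h^M$, and letting $M\to\infty$ yields \eqref{hfAssociativity}.

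The only delicate point — more a matter of careful bookkeeping than a genuine obstacle — is justifying that one may legitimately ``substitute a $z$-dependent input'' into \eqref{hfAssociativity} and \eqref{EquivalenthfAssociativity}. Modulo $h^M$ such an input is literally a finite $\mb K((z))[[h]]$-linear combination of elements of $V\otimes V$, so it suffices to check that multiplication by a scalar series commutes with the geometric expansion $\iota_{z,w}$ and can be pulled through the extended maps $Y(z),Y(w),Y(z+w),\mc S(z)$, and that a single exponent $N$ (resp. $N'$) works for the finitely many terms involved. Once this is settled the equivalence is immediate.
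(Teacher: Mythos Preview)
Your proposal is correct and follows essentially the same route as the paper's proof: both directions proceed by expanding $\mc S(z)(a\otimes b)$ (resp.\ $\mc S(z)^{-1}(a\otimes b)$) modulo $h^M$ as a finite sum $\sum_i a_i\otimes b_i\, f_i(z)$, applying the assumed identity to each $a_i\otimes b_i\otimes c$ with a common exponent, multiplying by the scalar Laurent series $f_i(z)$, and summing. Your write-up is somewhat more explicit than the paper's about the bookkeeping (invertibility of $\mc S$, commutation with $\iota_{z,w}$, $\mb K((z))[[h]]$-linearity), but the argument is the same.
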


\begin{proof}
By definition of $\mc{S}$, there exist $s \in \mb Z_{\geq0}$, $K \in \mathbb{Z}$, $a_i, b_i \in V$ and ${f_i}_k \in \mb{K}$ for $i=1, \ldots, s$ and $k \geq K$, such that $\mc{S}(z)(a \otimes b) = \sum_{i=1}^s \sum_{k \geq K} a_i \otimes b_i {f_i}_k z^k$ mod $h^M$. If the associativity relation \eqref{hfAssociativity} holds, for any $M \in \mb Z_{\geq0}$ and $i=1, \ldots, s$, there exists $N_i \geq 0$ such that, for any $c \in V$,
\begin{equation}
\label{EquivalenthfAssociativityLemEq1}
\begin{split}
(z+w&)^{N_i} \gse{z}{w} Y(z+w) \big(1\otimes Y(w) \big) (a_i \otimes b_i \otimes c)\\
&= (z+w)^{N_i} Y(w) \big( Y(z)\otimes 1 \big) (a_i \otimes b_i \otimes c)\ \textrm{mod}\ h^M.
\end{split}
\end{equation}
In particular, for $N' \geq \max\{N_i\}_{i=1, \ldots, s}$ one has
\begin{equation}
\label{EquivalenthfAssociativityLemEq2}
\begin{split}
(z+w&)^{N'} \gse{z}{w} Y(z+w) \big(1\otimes Y(w) \big) (a_i \otimes b_i \otimes c)\\
&= (z+w)^{N'} Y(w) \big( Y(z)\otimes 1 \big) (a_i \otimes b_i \otimes c)\ \textrm{mod}\ h^M.
\end{split}
\end{equation}
The LHS of equation \eqref{EquivalenthfAssociativityLemEq2} 
lies in $V_h ((z))((w))$ while the RHS lies in $V_h ((w))((z))$.
Hence, one can multiply both sides by ${f_i}_k z^k$ and sum over $i=1, \ldots, s$ 
and $k \geq K$, obtaining \eqref{EquivalenthfAssociativity}.
The converse follows in a similar way because, for suitable 
$a'_j, b'_j \in V$ and ${f'_j}_{k} \in \mb{K}$, one has
\begin{displaymath}
\mc{S}^{-1}(z)(a \otimes b) = 
\sum_{j=1}^{s'} \sum_{k \geq K'} a'_{j} \otimes b'_j {f'_j}_{k} z^k\ \textrm{mod}\ h^k.
\end{displaymath}
Thus, one can do similar operations 
starting with \eqref{EquivalenthfAssociativity} applied to the elements 
$a'_{j} \otimes b'_j\otimes c$.
\end{proof}

\begin{lemma}
\label{LiLem.2.1b}
Let $V=W[[h]]$ be a topologically free $\mb K[[h]]$-module.
Let $a(z,w) \in W((z))((w))[[h]]$, $b(z,w) \in W((w))((z))[[h]]$, $c(z,w) \in W((w))((z))[[h]]$.
Then equation \eqref{LiJacobi} is equivalent to 
equations \eqref{maratona} modulo $h^M$ for every $M$
(with $N$ possibly depending on $M$).
\end{lemma}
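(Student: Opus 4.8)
The plan is to reduce Lemma~\ref{LiLem.2.1b} to its classical counterpart Lemma~\ref{LiLem.2.1} by working one $h$-coefficient at a time. Write $V=W[[h]]$ and expand $a(z,w)=\sum_{j\geq0}a_j(z,w)h^j$ with $a_j\in W((z))((w))$, and likewise $b=\sum_j b_j h^j$, $c=\sum_j c_j h^j$ with $b_j,c_j\in W((w))((z))$. The first point to record is that both sides of \eqref{LiJacobi}, as well as each of the two equations in \eqref{maratona}, are produced from the triple $(a,b,c)$ by $\mb K[[h]]$-linear operations that are continuous in the $h$-adic topology: multiplication by the fixed expanded distributions $\gse{z}{w}\delta(x,z-w)$, $\gse{w}{z}\delta(x,z-w)$, $\gse{z}{x}\delta(w,z-x)$, multiplication by $(z-w)^N$ or $(x+w)^N$, the substitution $z\mapsto x+w$ followed by $\gse{x}{w}$, and taking residues. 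Since these operations commute with $\sum_j(\cdot)h^j$ and coefficients of formal power series in $h$ are uniquely determined, equation \eqref{LiJacobi} holds for $(a,b,c)$ if and only if it holds for $(a_j,b_j,c_j)$ for every $j\geq0$; and the equations \eqref{maratona} hold modulo $h^M$ with a given exponent $N$ if and only if they hold over $W$ for $(a_j,b_j,c_j)$, with that same $N$, for every $j<M$.

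Next I would run the two implications. For the forward direction, assume \eqref{LiJacobi}; then it holds for each $(a_j,b_j,c_j)$, and Lemma~\ref{LiLem.2.1} (applied with the $\mb K$-vector space $W$) furnishes, for each $j$, an integer $N_j\gg0$ for which \eqref{maratona} holds. Given $M\in\mb Z_{\geq0}$, set $N=\max\{N_j:0\leq j<M\}$; multiplying the identities \eqref{maratona} for $(a_j,b_j,c_j)$ by an extra power of $(z-w)$, resp.\ of $(x+w)$, leaves them valid, so \eqref{maratona} holds over $W$ for all $(a_j,b_j,c_j)$, $j<M$, with this common $N$, hence \eqref{maratona} holds modulo $h^M$ for $(a,b,c)$ with exponent $N=N(M)$. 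For the converse, assume that for every $M$ there is $N(M)$ for which \eqref{maratona} holds modulo $h^M$. Fixing $j$ and taking $M=j+1$ gives \eqref{maratona} over $W$ for $(a_j,b_j,c_j)$ with $N=N(j+1)$; by the ``if'' part of Lemma~\ref{LiLem.2.1}, equation \eqref{LiJacobi} holds for $(a_j,b_j,c_j)$. As this holds for all $j$, summing against $h^j$ and using $h$-adic continuity yields \eqref{LiJacobi} for $(a,b,c)$.

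There is no serious obstacle here; the only thing requiring genuine care is the bookkeeping of the exponent $N$, which is permitted to depend on $M$ and in general grows with $M$ (being the supremum of the finitely many classical exponents $N_0,\dots,N_{M-1}$), so no uniform $N$ can be expected --- this is precisely why the statement is phrased ``modulo $h^M$ for every $M$'' rather than as an unqualified equivalence. One should also check, before invoking Lemma~\ref{LiLem.2.1}, that its hypotheses are met coefficientwise, namely $a_j\in W((z))((w))$ and $b_j,c_j\in W((w))((z))$, which is immediate from the standing assumptions $a\in W((z))((w))[[h]]$, $b,c\in W((w))((z))[[h]]$.
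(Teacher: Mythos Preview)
Your proof is correct and takes a genuinely different route from the paper. You reduce the $h$-adic statement to the classical Lemma~\ref{LiLem.2.1} by expanding $a,b,c$ as power series in $h$ and observing that all the operations involved (multiplication by the expanded delta distributions, by powers of $(z-w)$ or $(x+w)$, substitution, residues) are $\mb K$-linear and act coefficientwise in $h$; this lets you invoke the classical lemma at each $h$-level and then reassemble, with the exponent $N(M)=\max\{N_0,\dots,N_{M-1}\}$. The paper instead reproves both implications directly in the $h$-adic setting: for the ``only if'' direction it multiplies \eqref{LiJacobi} by a suitable power of $z$ (chosen so that $z^N b(z,w)$ has no negative powers of $z$ modulo $h^M$) and takes $\Res_z$; for the ``if'' direction it uses the three-variable identity $(\gse{z}{w}-\gse{w}{z})\delta(x,z-w)=\gse{w}{x}\delta(z,w+x)$ and manipulates directly to recover \eqref{LiJacobi} modulo $h^M$ for every $M$. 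Your reduction is shorter and avoids repeating the delta-function combinatorics, at the cost of a black-box dependence on Lemma~\ref{LiLem.2.1}; the paper's argument is self-contained and makes explicit how the exponent $N$ arises from the pole orders of $b$ and $c$ modulo $h^M$.
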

\begin{proof}
Let us first prove the ``only if'' part. As $b(z,w) \in W((w))((z))[[h]]$, mod $h^M$ there is a suitable integer power $N$ of $z$ such that $z^N b(z,w)$ has only nonnegative powers of $z$. Multypling both sides of equation \eqref{LiJacobi} and taking the residue $Res_z$ we have the following equality:
\begin{align*}
Res_z z^N \gse{z}{w}\delta(x,z-w)\ a(z,w) = Res_z z^N \gse{z}{x} \delta(w,z-x)\ c(x,w)\, \text{ mod } h^M.
\end{align*}
The LHS is then equal to $Res_z z^N \gse{x}{w}\delta(x+w,z)\ a(z,w)$ which is in turn equal to $(x+w)^N \gse{x}{w} a(x+w,w)$. The RHS is equal to $Res_z z^N \gse{w}{x} \delta(w+x,z)\ c(x,w)$ which is in turn equal to $(w+x)^N c(x,w)$. We, thus have the second equation of \eqref{maratona} mod $h^M$ for any $M$. With the same reasoning line, we obtain the first equation of \eqref{maratona} using that $c(z,w) \in W((w))((z))[[h]]$.\\
Let us now focus on the ``if'' part. By the first equation of \eqref{maratona}, for $N \gg 0$, one has the following equalities:
\begin{equation}
\label{eq:ecografia1}
\begin{split}
&\gse{z}{w}\delta(x,z-w)\ (z-w)^N\ a(z,w) - \gse{w}{z} \delta(x,z-w)\ (z-w)^N\ b(z,w)\\
&= (\gse{z}{w} - \gse{w}{z}) \delta(x,z-w)\ (z-w)^N\ a(z,w)\ \text{ mod } h^M.
\end{split}
\end{equation}
Recalling the definition of the formal $\delta$ distribution and that $(\gse{z}{w} - \gse{w}{z})(z-w)^m = 0$ if $m \geq 0$ and $\frac{1}{(-n-1)!} \spd{w}^{-n-1} \delta(z,w)$ if $n < 0$, one has that
\begin{equation}
\label{eq:ecografia2}
\begin{split}
(\gse{z}{w}& - \gse{w}{z}) \delta(x,z-w)= \sum_{m \in \mb Z} x^{-m-1} (\gse{z}{w} - \gse{w}{z})(z-w)^{m}\\
&=\sum_{m < 0} x^{-m-1} \frac{1}{(-m-1)!} \spd{w}^{-m-1} \delta(z,w) = e^{x \spd{w}} \delta(z,w)\\
&= \gse{w}{x} \delta(z, w+x).
\end{split}
\end{equation}
Therefore, by equations \eqref{eq:ecografia1} and \eqref{eq:ecografia2}, one has
\begin{equation}
\label{eq:ecografia3}
\begin{split}
&\gse{z}{w}\delta(x,z-w)\ (z-w)^N\ a(z,w) - \gse{w}{z} \delta(x,z-w)\ (z-w)^N\ b(z,w)\\
& = \gse{w}{x} \delta(z, w+x) (z-w)^N\ a(z,w)\ \text{ mod } h^M.
\end{split}
\end{equation}
Using again the first part of equation \eqref{maratona}, one notes that $(z-w)^N\ a(z,w) \in W((w))((z))[[h]]$, hence, $z^L (z-w)^N\ a(z,w)$ has only non-negative integer powers of $z$ mod $h^M$ for $L \gg 0$. It follows that, multiplying both sides of equation \eqref{eq:ecografia3} by $z^L$ for $L \gg 0$, the following equalities hold:
\begin{align*}
&\gse{z}{w}\delta(x,z-w)\ z^L\ (z-w)^N\ a(z,w) - \gse{w}{z} \delta(x,z-w)\ z^L\ (z-w)^N\ b(z,w)\\
&= \gse{w}{x} \delta(z, w+x)\ z^L\ (z-w)^N\ a(z,w)\\
&= \gse{w}{x} \delta(z, w+x)\ (w+x)^L\ x^N\ a(w+x,w)\\
&= \gse{z}{x} \delta(z-x, w)\ \gse{w}{x} (w+x)^L\ x^N\ a(w+x,w)\\
&= \gse{z}{x} \delta(z-x, w)\ \gse{x}{w} (w+x)^L\ x^N\ a(w+x,w) \text{ mod } h^M.
\end{align*}
Using the second equation of \eqref{maratona} and recalling that $\delta(x,t)\ t^N = \delta(x,t)\ x^N$, one then has
\begin{align*}
&\gse{z}{w}\delta(x,z-w)\ z^L\ x^N\ a(z,w) - \gse{w}{z} \delta(x,z-w)\ z^L\ x^N\ b(z,w)\\
&= \gse{z}{x} \delta(z-x, w)\ z^L\ x^N\ c(x,w) \text{ mod } h^M
\end{align*}
from which, multiplying both sides by $z^{-L} x^{-N}$, one has
\begin{equation}
\label{eq:ecografia4}
\begin{split}
&\gse{z}{w}\delta(x,z-w)\ a(z,w) - \gse{w}{z} \delta(x,z-w)\ b(z,w)\\
&= \gse{z}{x} \delta(z-x, w)\ c(x,w) \text{ mod } h^M.
\end{split}
\end{equation}
Since equation \eqref{eq:ecografia4} holds for any $M$, equation \eqref{LiJacobi} follows.
\end{proof}

\begin{proof}[Proof of Theorem \ref{associativeBVABorcherdsIdentities}]
Let $(V, \vac, Y, \mc S)$ be a quantum vertex algebra. 
%
By Lemma \ref{EK5Lem.1.2} we have $Y\mc S=Y^{op}$,
and then, by Lemma \ref{EquivalenthfAssociativityLem},
equation \eqref{EquivalenthfAssociativity} holds.
Let 
\begin{equation}\label{maratona6}
\begin{split}
a(z,w)&=Y(z) \big( 1\otimes Y(w) \big)\big(\mc{S}(z-w)(a \otimes b) \otimes c \big)
\,,\\
b(z,w)&=Y(w) \big( 1\otimes Y(z) \big)(b \otimes a \otimes c)
\,\text{ and }\\
c(z,w)&=Y(w) \big( Y(z)\mc S(z) \otimes 1 \big) (a \otimes b \otimes c)
\,.
\end{split}
\end{equation}
Note that, with these choices,
the $\mc S$-locality equation \eqref{S-locality}
is, modulo $h^M$, the first equation in \eqref{maratona},
while equation \eqref{EquivalenthfAssociativity} is, again modulo $h^M$,
the second equation in \eqref{maratona}
(with $z$ in place of $x$).
Therefore, by Lemma \ref{LiLem.2.1}, 
we get
\begin{align*}
\gse{z}{w} \delta(x, &z-w) Y(z) \big( 1\otimes Y(w) \big)
\big(\mc{S}(z-w)(a \otimes b) \otimes c \big)\\
- \gse{w}{z}& \delta(x, z-w) Y(w) \big( 1\otimes Y(z) \big)(b \otimes a \otimes c)\\
&= \gse{z}{x} \delta(w, z-x) Y(w) \big( Y(x)\mc S(x) \otimes 1 \big) 
(a \otimes b \otimes c)\ \textrm{mod}\ h^M.
\end{align*}
Since $V$ is a topologically free $\Kh$-module, this equation holds identically.
Since $Y\mc S=Y^{op}$, we thus have
\begin{equation}
\label{qJacobiop}
\begin{split}
\gse{z}{w} \delta(x, &z-w) Y(z) \big( 1\otimes Y(w) \big)\big(\mc{S}(z-w)
(a \otimes b) \otimes c \big)\\
- \gse{w}{z}& \delta(x, z-w) Y(w) \big( 1\otimes Y(z) \big)(b \otimes a \otimes c)\\
&= \gse{z}{x} \delta(w, z-x) Y(w) \big( Y^{op}(x) \otimes 1 \big) (a \otimes b \otimes c).
\end{split}
\end{equation}
Multiplying equation \eqref{qJacobiop} by $x^n$ and taking the residue $Res_x$, 
the LHS becomes
\begin{equation}\label{maratona2}
\begin{split}
& \gse{z}{w} (z-w)^n Y(z) (1\otimes Y(w)) \mc{S}^{12}(z-w)(a \otimes b \otimes c) \\
&- \gse{w}{z} (z-w)^n Y(w) (1\otimes Y(z))(b \otimes a \otimes c) 
\end{split}
\end{equation}
and the RHS becomes
\begin{equation}\label{maratona3}
\begin{split}
&Res_x \Big( x^n \gse{z}{x} \delta(w, z-x) Y(w) \big( Y^{op}(x) \otimes 1 \big) 
(a \otimes b \otimes c) \Big)\\
&= Res_x \Big( x^n e^{-x \spd{z}} \delta(z,w) Y(w) \big( Y^{op}(x) \otimes 1 \big) 
(a \otimes b \otimes c) \Big)\\
&= \sum_{l \geq 0} \frac{(-1)^l \spd{z}^l \delta(z, w)}{l!} Y(w) 
\big( a_{(n+l)}^{\mc{S}} b \otimes c \big)\\
&= \sum_{l \geq 0} Y(w) \big( a_{(n+l)}^{\mc{S}} b \otimes c \big) 
\frac{\spd{w}^l \delta(z, w)}{l!}.
\end{split}
\end{equation}
Combining \eqref{maratona2} and \eqref{maratona3},
we get the quantum Borcherds identity \eqref{qBorcherdsIdentities}.

Conversely, 
let $(V, \vac, Y, \mc S)$ be a braided state-field correspondence,
and assume that the quantum Borcherds identity \eqref{qBorcherdsIdentities} holds.
Multiplying it by $x^{-n-1}$ 
and summing over $n \in \mathbb{Z}$, 
the LHS becomes
\begin{equation}\label{maratona4}
\begin{split}
\gse{z}{w} \delta(x, &z-w) Y(z) \big( 1\otimes Y(w) \big)
\big(\mc{S}(z-w)(a \otimes b) \otimes c \big)\\
- \gse{w}{z}& \delta(x, z-w) Y(w) \big( 1\otimes Y(z) \big)(b \otimes a \otimes c)
\end{split}
\end{equation}
and the RHS becomes
\begin{equation}\label{maratona5}
\begin{split}
&\sum_{n \in \mathbb{Z}} x^{-n-1} \sum_{l \geq 0} Y(w) 
\big( a_{(n+l)}^{\mc{S}} b \otimes c \big) \frac{\spd{w}^l \delta(z, w)}{l!}\\
&= \sum_{l \geq 0} Y(w) \Big( \sum_{n \in \mathbb{Z}} 
a_{(n+l)}^{\mc{S}} b\ x^{-n-l-1} \otimes c \Big) \frac{x^l \spd{w}^l \delta(z, w)}{l!}\\
&= \gse{z}{x} \delta(w, z-x) Y(w) \big( Y(x)S(x) \otimes 1 \big) (a \otimes b \otimes c).
\end{split}
\end{equation}
Equating \eqref{maratona4} and \eqref{maratona5},
we get equation \eqref{LiJacobi} with $a=a(z,w), b=b(z,w),c=c(z,w)$ as in \eqref{maratona6}.
By Lemma \ref{LiLem.2.1}, 
since $Y(z)(a \otimes c) \in V_h((z))$ 
and $Y(x)(b \otimes a) \in V_h((x))$, the associativity relation \eqref{hfAssociativity} 
and the $\mc{S}$-locality hold
on any vector. 
In particular, we have the $\mc{S}$-locality on the vacuum. 
By Remark \ref{EK5Lem.1.2a} we then have $Y\mc{S} = Y^{op}$.
The $\mc{S}$-locality now follows by Theorem \ref{July2017Thm}.
\end{proof}

Within the proof of Theorem \ref{associativeBVABorcherdsIdentities}
we have proved the following.
\begin{corollary}\label{cor:3:36}
Equation \eqref{qJacobiop} is equivalent to 
the Borcherds identity \eqref{qBorcherdsIdentities}.
\end{corollary}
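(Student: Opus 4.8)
The plan is to recognize that this equivalence is nothing but the formal-distribution bookkeeping already carried out in the body of the proof of Theorem \ref{associativeBVABorcherdsIdentities}, isolated and run in both directions: \eqref{qJacobiop} is precisely the generating series, in the auxiliary variable $x$, of the family \eqref{qBorcherdsIdentities} indexed by $n\in\mb Z$. So I would not re-derive anything from the braided state-field correspondence axioms; I would simply pass between a generating function and its coefficients.

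First I would go from \eqref{qJacobiop} to \eqref{qBorcherdsIdentities}. Multiplying both sides of \eqref{qJacobiop} by $x^n$ and taking $\Res_x$, and using $\Res_x\big(x^n\,\gse{z}{w}\delta(x,z-w)\big)=\gse{z}{w}(z-w)^n$ together with the analogous identity for $\gse{w}{z}$, the left-hand side turns into the left-hand side of \eqref{qBorcherdsIdentities}. For the right-hand side I would use $\gse{z}{x}\delta(w,z-x)=e^{-x\spd{z}}\delta(z,w)$, expand $Y^{op}(x)(a\otimes b)=\sum_k (a^{\mc{S}}_{(k)}b)\,x^{-k-1}$, compute the residue in $x$, and convert $\spd{z}^l\delta(z,w)=(-1)^l\spd{w}^l\delta(z,w)$; this produces exactly $\sum_{l\geq0}Y(w)\big(a^{\mc{S}}_{(n+l)}b\otimes c\big)\frac{\spd{w}^l\delta(z,w)}{l!}$. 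This is the same computation that gave \eqref{maratona2} and \eqref{maratona3} inside the proof of Theorem \ref{associativeBVABorcherdsIdentities}.

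Conversely I would multiply \eqref{qBorcherdsIdentities} by $x^{-n-1}$ and sum over $n\in\mb Z$. On the left, $\sum_n x^{-n-1}\gse{z}{w}(z-w)^n=\gse{z}{w}\delta(x,z-w)$ by the definition of $\delta$, so the left-hand side becomes the left-hand side of \eqref{qJacobiop}. On the right, after the reindexing $n\mapsto n-l$ the double sum factors as $\sum_{l\geq0}\frac{x^l\spd{w}^l\delta(z,w)}{l!}\,Y(w)\big(\big(\sum_n (a^{\mc{S}}_{(n)}b)x^{-n-1}\big)\otimes c\big)=e^{x\spd{w}}\delta(z,w)\cdot Y(w)\big(Y^{op}(x)(a\otimes b)\otimes c\big)$, and since $e^{x\spd{w}}\delta(z,w)=e^{-x\spd{z}}\delta(z,w)=\gse{z}{x}\delta(w,z-x)$ we recover the right-hand side of \eqref{qJacobiop}. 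This is the passage from \eqref{maratona4} to \eqref{maratona5}.

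I do not expect any real obstacle: the only thing to verify is that these manipulations are legitimate in the $h$-adic, formal-distribution setting. By Lemma \ref{EK5Lem.1.2} the series $Y^{op}(x)(a\otimes b)=Y(x)\mc{S}(x)(a\otimes b)$ lies in $V_h((x))$, so $a^{\mc{S}}_{(k)}b\to0$ $h$-adically as $k\to+\infty$ and the reindexed sums are well defined modulo every $h^M$; moreover each residue in $x$ picks out only finitely many terms modulo $h^M$. Since all the identities invoked — $\Res_x\big(x^n\,\gse{z}{w}\delta(x,z-w)\big)=\gse{z}{w}(z-w)^n$, $\gse{z}{x}\delta(w,z-x)=e^{-x\spd{z}}\delta(z,w)$, and $\spd{z}^l\delta(z,w)=(-1)^l\spd{w}^l\delta(z,w)$ — are $\mb K$-linear and commute with multiplication by $h$, the two passages are mutually inverse, and the stated equivalence follows.
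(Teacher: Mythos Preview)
Your approach is exactly the paper's: the corollary is stated as having been proved ``within the proof of Theorem \ref{associativeBVABorcherdsIdentities}'', and the two directions you spell out are precisely the computations \eqref{maratona2}--\eqref{maratona3} and \eqref{maratona4}--\eqref{maratona5}. One small imprecision to fix: you invoke Lemma \ref{EK5Lem.1.2} to identify the Fourier coefficients of $Y^{op}(x)$ with the $a^{\mc S}_{(k)}b$, but that lemma is stated for a \emph{braided vertex algebra}, whereas the ambient hypothesis in Section \ref{sec:5.4} is only a braided state-field correspondence. The correct justification (and the one the paper uses in the converse direction) is that either identity already forces $\mc S$-locality on the vacuum---from \eqref{qJacobiop} via Lemma \ref{LiLem.2.1b}, from \eqref{qBorcherdsIdentities} by taking $n$ large---and then Remark \ref{EK5Lem.1.2a} gives $Y\mc S=Y^{op}$.
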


We also have the following:
\begin{lemma}\label{lem:3:36}
Equation \eqref{qJacobiop} is equivalent to the $\mc S$-Jacobi identity
(which first appeared in \cite{L10}):
\begin{equation}
\label{LiSJacobi}
\begin{split}
& \gse{z}{w} \delta(x, z-w) Y(z) \big( 1\otimes Y(w) \big)(a \otimes b \otimes c)\\
& - \gse{w}{z} \delta(x, z-w) Y(w) \big( 1\otimes Y(z) \big)\big(\mc{S}(w-z)(b \otimes a) \otimes c \big)\\
& = \gse{z}{x} \delta(w, z -x) Y(w) \big( Y(x) \otimes 1 \big) (a \otimes b \otimes c).
\end{split}
\end{equation}
\end{lemma}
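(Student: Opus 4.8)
The plan is to show that \emph{both} the $\mc S$-Jacobi identity \eqref{LiSJacobi} and equation \eqref{qJacobiop} are equivalent, for a fixed braided state-field correspondence $(V,\vac,Y,\mc S)$, to the conjunction of the $\mc S$-locality \eqref{S-locality} and the associativity relation \eqref{hfAssociativity}; the asserted equivalence then follows at once. For \eqref{qJacobiop} this has essentially already been established: by Corollary \ref{cor:3:36} equation \eqref{qJacobiop} is equivalent to the quantum Borcherds identity \eqref{qBorcherdsIdentities}, which by Theorem \ref{associativeBVABorcherdsIdentities} is equivalent to \eqref{S-locality} together with \eqref{hfAssociativity}.

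It remains to run the analogous argument for \eqref{LiSJacobi}. I would fix $a,b,c\in V$ and set
$$
\tilde a(z,w)=Y(z)\big(1\otimes Y(w)\big)(a\otimes b\otimes c),\qquad
\tilde c(z,w)=Y(w)\big(Y(z)\otimes1\big)(a\otimes b\otimes c),
$$
$$
\tilde b(z,w)=Y(w)\big(1\otimes Y(z)\big)\big(\gse{w}{z}\mc S(w-z)(b\otimes a)\otimes c\big),
$$
where $\mc S(w-z)$ is expanded in the domain $|w|>|z|$, which is the convention implicit in \eqref{LiSJacobi}. One checks that, modulo $h^M$ for every $M$, these lie in $W((z))((w))[[h]]$, $W((w))((z))[[h]]$ and $W((w))((z))[[h]]$ respectively; the only point requiring care is $\tilde b$, which modulo $h^M$ is a finite sum of terms $Y(w)\big(b_i\otimes Y(z)(a_i\otimes c)\big)$ multiplied by $\gse{w}{z}$ of a Laurent series in $w-z$, hence indeed lies in $W((w))((z))[[h]]$. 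With these choices, \eqref{LiSJacobi} is literally equation \eqref{LiJacobi} for the triple $\tilde a,\tilde b,\tilde c$. By Lemma \ref{LiLem.2.1b} it is therefore equivalent to the two relations \eqref{maratona}, i.e. to
$$
(z-w)^N\tilde a(z,w)=(z-w)^N\tilde b(z,w)
\,,\qquad
(x+w)^N\gse{x}{w}\tilde a(x+w,w)=(x+w)^N\tilde c(x,w)
$$
modulo $h^M$ for some $N\gg0$. The second of these is exactly the associativity relation \eqref{hfAssociativity} (with $x$ in place of $z$), while the first, after relabelling $z\leftrightarrow w$, $a\leftrightarrow b$ and using $(w-z)^N=(-1)^N(z-w)^N$, is precisely the $\mc S$-locality \eqref{S-locality}. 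Letting $a,b,c$ range over $V$ we conclude that \eqref{LiSJacobi} holds for all $a,b,c$ if and only if \eqref{S-locality} and \eqref{hfAssociativity} hold, and combining this with the first paragraph finishes the proof.

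I expect the only genuinely delicate step to be the bookkeeping of the expansion conventions for $\mc S(w-z)$ and the geometric series factors, so that Lemma \ref{LiLem.2.1b} applies to the triple $\tilde a,\tilde b,\tilde c$ exactly as written; once the three series are placed in the correct iterated-Laurent-series-over-$\mb K[[h]]$ spaces, everything reduces to matching formulas already proved. (One could equally avoid citing Corollary \ref{cor:3:36} by observing that the Decomposition-Theorem computation in the proof of Theorem \ref{associativeBVABorcherdsIdentities} that produced \eqref{qJacobiop} from the triple \eqref{maratona6} is the \emph{same} computation applied to $\tilde a,\tilde b,\tilde c$ above, so that \eqref{qJacobiop} and \eqref{LiSJacobi} arise from one and the same pair of conditions.)
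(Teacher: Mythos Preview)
Your argument is correct, but it takes a considerably longer detour than the paper's proof. The paper establishes the equivalence by a direct change of variables: starting from \eqref{qJacobiop}, one swaps $a\leftrightarrow b$, $z\leftrightarrow w$, and replaces $x$ by $-x$. Using $\delta(-x,-y)=-\delta(x,y)$, the left-hand side immediately becomes the left-hand side of \eqref{LiSJacobi}, while the right-hand side becomes $\gse{w}{x}\delta(z,w+x)\,Y(z)\big(Y^{op}(-x)\otimes1\big)(b\otimes a\otimes c)$, which after unfolding $Y^{op}(-x)=e^{-xT}Y(x)(1\,2)$ and using translation covariance of $Y$ reduces to the right-hand side of \eqref{LiSJacobi}. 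This is a two-line computation, entirely elementary once one has the formulas for $Y^{op}$.

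By contrast, you pass through the intermediate characterization ``$\mc S$-locality plus associativity'' twice (once for each identity), invoking Corollary \ref{cor:3:36}, Theorem \ref{associativeBVABorcherdsIdentities}, and Lemma \ref{LiLem.2.1b}. This works, but note a small imprecision: applying Lemma \ref{LiLem.2.1b} to your triple $\tilde a,\tilde b,\tilde c$ yields $\mc S$-locality \emph{on each vector} $c$ (the exponent $N$ a priori depends on $c$), not the uniform $\mc S$-locality \eqref{S-locality}. This does not break your argument, since the same is true when one applies Lemma \ref{LiLem.2.1b} to the triple \eqref{maratona6} for \eqref{qJacobiop}; both identities are thus equivalent to the same condition ``$\mc S$-locality on every vector plus associativity,'' and hence to each other. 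Still, the paper's direct substitution is shorter, more transparent, and avoids any need to track this distinction.
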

\begin{proof}
Exchanging, in equation \eqref{qJacobiop}, the variables $a$ and $b$ 
and the parameters $z$ and $w$, 
and changing sign to $x$, 
the LHS becomes
\begin{align*}
& \gse{z}{w} \delta(x, z-w) Y(z) \big( 1\otimes Y(w) \big)(a \otimes b \otimes c)\\
& - \gse{w}{z} \delta(x, z-w) Y(w) \big( 1\otimes Y(z) \big)\big(\mc{S}(w-z)(b \otimes a) \otimes c \big)
\,,
\end{align*}
since $\delta(-x,-y)=-\delta(x,y)$,
while the RHS becomes
\begin{align*}
&\gse{w}{x} \delta(z, w+x) Y(z) \big( Y^{op}(-x) \otimes 1 \big) (b \otimes a \otimes c)\\
&= \gse{z}{x} \delta(z -x, w) Y(z) \big( e^{-xT}Y(x) \otimes 1 \big) (a \otimes b \otimes c)\\
&= \gse{z}{x} \delta(w, z -x) Y(z-x) \big(Y(x) \otimes 1 \big) (a \otimes b \otimes c)\\
&= \gse{z}{x} \delta(w, z -x) Y(w) \big( Y(x) \otimes 1 \big) (a \otimes b \otimes c)
\,.
\end{align*}
For the first equality we used the definition of $Y^{op}$ 
while, for the second equality, we used the translation covariance of $Y$.
\end{proof}

\subsection{The characterization Theorem}
\label{sec:5.5}

Combining the previous results, we have proved the following theorem:
\begin{theorem}\label{thm:final}
Let $(V, \vac, Y, \mc S)$ be a braided state-field correspondence.
The following statements are equivalent:
\begin{enumerate}[(i)]
\item $(V, \vac, Y, \mc S)$ is a quantum vertex algebra;
\item the $\mc{S}$-Jacobi identity \eqref{LiSJacobi} holds;
\item the associativity relation \eqref{hfAssociativity} and the equation $Y\mc{S} = Y^{op}$ hold;
\item the quantum Borcherds identity \eqref{qBorcherdsIdentities} holds;
\item the quantum $n$-product identities \eqref{qnproductIdentities} 
and the $\mc{S}$-locality \eqref{S-locality} hold.
\end{enumerate}
\end{theorem}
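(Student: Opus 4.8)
The idea is to assemble the statement from the results of Sections~\ref{sec:5.1}--\ref{sec:5.4}, using the quantum Borcherds identity~(iv) as a central hub: I would establish separately the equivalences (i)$\Leftrightarrow$(iv), (i)$\Leftrightarrow$(iii), (ii)$\Leftrightarrow$(iv) and (v)$\Leftrightarrow$(iv), which together make all five statements mutually equivalent.

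First, by Definitions~\ref{BVADef} and~\ref{QVADef}, statement~(i) is, for the given braided state-field correspondence $(V,\vac,Y,\mc S)$, precisely the conjunction of the $\mc S$-locality~\eqref{S-locality} and the associativity relation~\eqref{hfAssociativity}. Hence Theorem~\ref{associativeBVABorcherdsIdentities} yields (i)$\Leftrightarrow$(iv) directly. For (i)$\Leftrightarrow$(iii): once the associativity relation~\eqref{hfAssociativity} is held as a hypothesis, Theorem~\ref{July2017Thm} asserts that the $\mc S$-locality~\eqref{S-locality} is equivalent to $Y\mc S = Y^{op}$; therefore (i) $=$ associativity $+$ $\mc S$-locality $\Leftrightarrow$ associativity $+$ $Y\mc S = Y^{op}$ $=$ (iii).

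Next, (ii)$\Leftrightarrow$(iv) is obtained by composing two earlier observations: Corollary~\ref{cor:3:36} identifies the quantum Borcherds identity~\eqref{qBorcherdsIdentities} with the auxiliary identity~\eqref{qJacobiop}, and Lemma~\ref{lem:3:36} identifies~\eqref{qJacobiop} with the $\mc S$-Jacobi identity~\eqref{LiSJacobi}. Finally, (v)$\Leftrightarrow$(iv) is exactly Lemma~\ref{EquivalenceqBorcherdsIdentitiesqnprodIdentitiesLem}, which states that~\eqref{qBorcherdsIdentities} holds if and only if the quantum $n$-product identities~\eqref{qnproductIdentities} hold for all $n\in\mb Z$ together with the $\mc S$-locality~\eqref{S-locality}. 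Collecting the four equivalences, any of the five conditions characterizes a quantum vertex algebra.

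Since every implication in this scheme is a result proved earlier, the proof is essentially bookkeeping, and there is no substantial obstacle. The only point that deserves care — and what I would flag as the main (mild) subtlety — is the direction (iii)$\Rightarrow$(i): here one must invoke Theorem~\ref{July2017Thm} with the associativity relation fixed as a standing hypothesis (it is part of~(iii)), because outside the associative setting the equivalence of $\mc S$-locality with $Y\mc S = Y^{op}$ is not available. One should also check that no circular reasoning enters among Theorems~\ref{July2017Thm}, \ref{associativeBVABorcherdsIdentities} and Lemma~\ref{EquivalenceqBorcherdsIdentitiesqnprodIdentitiesLem}; inspection of their proofs shows the dependencies form a tree, so the assembly is legitimate.
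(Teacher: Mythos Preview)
Your proposal is correct and follows exactly the same assembly as the paper: the equivalences (i)$\Leftrightarrow$(iii), (i)$\Leftrightarrow$(iv), (ii)$\Leftrightarrow$(iv), and (iv)$\Leftrightarrow$(v) are deduced from Theorem~\ref{July2017Thm}, Theorem~\ref{associativeBVABorcherdsIdentities}, Corollary~\ref{cor:3:36} with Lemma~\ref{lem:3:36}, and Lemma~\ref{EquivalenceqBorcherdsIdentitiesqnprodIdentitiesLem}, respectively. Your remarks on the role of associativity in invoking Theorem~\ref{July2017Thm} and on the absence of circularity are apt but already implicit in the paper's organization.
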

\begin{proof}
The equivalence of conditions (i) and (iii) was proved in Theorem \ref{July2017Thm}.
The equivalence of (i) and (iv) was proved 
in Theorem \ref{associativeBVABorcherdsIdentities}.
The equivalence of (ii) and (iv) is a consequence 
of Corollary \ref{cor:3:36} and Lemma \ref{lem:3:36}.
Finally, the equivalence of (iv) and (v) was proved in Lemma 
\ref{EquivalenceqBorcherdsIdentitiesqnprodIdentitiesLem}.
\end{proof}

\end{document}